\newcommand{\rrvert}{\vert}
\newcommand{\llvert}{\vert}
\def\xrightarrow{\rightarrow}
\newcommand{\eqref}[1]{(\ref{#1})}
\newtheorem{theorem}{Theorem}[section]
\newtheorem{proposition}[theorem]{Proposition}
\newtheorem{lemma}[theorem]{Lemma}
\newtheorem{corollary}[theorem]{Corollary}
\newcommand{\del}{\partial}
\newcommand{\intE}{\int_{\rplus}}
\renewcommand{\epsilon}{\varepsilon}
\newcommand{\R}{\mathbb{R}}
\newcommand{\N}{\mathbb{N}}
\newcommand{\EE}{\mathbb{E}} 
\newcommand{\PP}{\mathbb{P}} 
\newcommand{\defeq}{\stackrel{ \operatorname{def}}{=}}
\newcommand{\laweq}{\stackrel{ \mathcal{L}}{=}}
\newcommand{\rplus}{E} 
\newcommand{\Mplus}{\mathcal{M}_{1 \wedge x}}
\newcommand{\qref}[1]{(\ref{#1})}
\newcommand{\vp}{\varphi}
\newcommand{\alphabr}{{\hat\alpha}}
\newcommand{\betabr}{{\hat\beta}}
\newcommand{\sspan}{\mathop{\operatorname{span}} }
\newcommand{\cadlag}{c\`{a}dl\`{a}g\ }
\newcommand{\Skor}{\mathbb{D}}
\begin{document}
\begin{frontmatter}

\title{Limit theorems for Smoluchowski dynamics associated with
critical continuous-state branching~processes}
\runtitle{Limit theorems for critical CSBPs}

\begin{aug}
\author[A]{\fnms{Gautam} \snm{Iyer}\thanksref{T1,T3}\ead[label=e1]{gautam@math.cmu.edu}},
\author[A]{\fnms{Nicholas} \snm{Leger}\corref{}\thanksref{T3}\ead[label=e2]{nleger@andrew.cmu.edu}}
\and
\author[A]{\fnms{Robert L.} \snm{Pego}\thanksref{T2,T3}\ead[label=e3]{rpego@cmu.edu}}
\runauthor{G.~Iyer, N.~Leger and R.~L. Pego}
\affiliation{Carnegie Mellon University}
\address[A]{Department of Mathematical Sciences\\
Carnegie Mellon University\\
Pittsburgh, Pennsylvania 15213\\
USA\\
\printead{e1}\\
\phantom{E-mail:\ }\printead*{e2}\\
\phantom{E-mail:\ }\printead*{e3}} 
\thankstext{T1}{Supported in part by NSF Grant DMS-10-07914.}
\thankstext{T2}{Supported in part by NSF Grants DMS-09-05723
and 1211161, and ICTI/FCT Grant UTA-CMU/0007/2009.}
\thankstext{T3}{Supported in part by the Center for Nonlinear Analysis
(CNA) under the NSF Grant no. 0635983 and PIRE Grant no. OISE-0967140.}
\end{aug}

\received{\smonth{1} \syear{2013}}
\revised{\smonth{9} \syear{2013}}

%
\begin{abstract}
We investigate the well-posedness and asymptotic self-similarity of
solutions to a generalized Smoluchowski coagulation equation recently
introduced by Bertoin and Le Gall
in the context of continuous-state branching theory.
In particular, this equation governs the
evolution of the L\'evy measure of a critical continuous-state
branching process which becomes extinct (i.e., is absorbed at zero)
almost surely.
We show that a nondegenerate scaling limit of the L\'evy measure
(and the process) exists
if and only if the branching mechanism is regularly varying at 0.
When the branching mechanism is regularly varying,
we characterize nondegenerate scaling limits
of arbitrary finite-measure solutions
in terms of generalized Mittag--Leffler series.
\end{abstract}

%
\begin{keyword}[class=AMS]
\kwd[Primary ]{60J80}
\kwd[; secondary ]{60G18}
\kwd{35Q70}
\kwd{82C28}
\end{keyword}
\begin{keyword}
\kwd{Continuous-state branching process}
\kwd{critical branching}
\kwd{limit theorem}
\kwd{scaling limit}
\kwd{Smoluchowski equation}
\kwd{coagulation}
\kwd{self-similar solution}
\kwd{Mittag--Leffler series}
\kwd{regular variation}
\kwd{Bernstein function}
\end{keyword}

\end{frontmatter}

\section{Introduction}
\subsection{Overview}
Recently Bertoin and Le~Gall~\cite{BLG06} observed a connection
between the Smoluchowski coagulation equation
and any critical continuous-state branching process (hereafter CSBP) that
becomes extinct with probability one.
Our general goal in this paper is to establish criteria
for the existence of dynamic scaling limits in such branching processes,
by extending methods that were recently used to analyze coagulation
dynamics in the classically important ``solvable'' cases (i.e., cases
reduced to PDEs in terms of Laplace transforms).

Substantial progress has been made in recent years understanding
the long-time behavior of solutions to solvable Smoluchowski coagulation
equations. A~rich analogy has been developed between dynamic
scaling in these equations and classical limit theorems in probability,
including the central limit theorem, the classification of stable laws and
their domains of attraction \cite{Ley03,MP04}, and the L\'evy--Khintchine
representation of infinitely divisible laws \cite{Be02,MP08}.

A new challenge in dealing with
the coagulation equations that appear in the context of CSBPs
is that they typically lack the homogeneity properties which
were used extensively in earlier scaling analyses.
On the other hand, use of a Laplace exponent transform leads to the
study of a rather simple differential equation determined
by the branching mechanism of the CSBP. Moreover, these branching mechanisms
have a special structure, a L\'evy--Khintchine representation formula
expressed in terms of a certain measure related to family-size distribution.

To deal with the lack of homogeneity, we will adapt ideas from
re\-nor\-mal\-ization-group analysis, studying convergence
of rescaled solutions together with the rescaled equations they satisfy.
Such methods have been used to study asymptotic limits in a variety of
problems including nonlinear parabolic PDE and KAM theory
\cite{BK1}. 
An important point in this type of analysis, and one featured here, is
that nontrivial scaling limits, if they exist, satisfy a homogeneous
limiting equation. We describe these features in greater detail below.

\subsection{Continuous-state branching processes}\label{s:CSBP}
CSBPs arise as continuous-size,
continuous-time limits of scaled Galton--Watson processes,
which model the total number in a population of individuals
who independently reproduce with identical rates and family-size
distributions.
A CSBP consists of a two-parameter random process $(t,x)
\mapsto Z(t,x)\in[0,\infty)$ ($t\ge0$, $x>0$). For fixed $x$, the
process $t\mapsto Z(t,x)$ is Markov with initial value $Z(0,x) = x$.
For fixed $t$, the process $x \mapsto Z(t,x)$ is an increasing process
with independent and stationary increments. The right-continuous
version of this process
is a L\'evy process with increasing sample paths. In particular, the
process enjoys the branching property that $Z(t,x+y)$ has the same
distribution as
the sum of independent copies of $Z(t,x)$ and $Z(t,y)$ for all $t \geq0$.

The structure of the process $Z(t,x)$ has a precise characterization
via the Lamperti transform. That is, $t \mapsto Z(t,x)$ can be
expressed as a subordinated Markov process with parent process $x +
X_t$ where $X_t$ is a spectrally positive L\'evy process. More
specifically, $Z(t,x) = x + X_{\Theta(t,x)}$ where the process $t
\mapsto\Theta(t,x)$ has nondecreasing sample paths and formally
solves $\partial_t \Theta= x + X_\Theta$.
In this context, the Laplace exponent of $X_t$, denoted $\Psi$, is
called the \textit{branching mechanism} for $Z(t,x)$ and has L\'
evy--Khintchine representation
%
\begin{equation}
\label{e:bm1} \Psi(u) = \alpha u + \beta u^2 + \int
_{(0, \infty)} \bigl(e^{-ux} - 1 + ux\mathbf{1}_{\{x<1\}}
\bigr) \pi(dx),
\end{equation}
where $\alpha\in\R$, $\beta\geq0$, and $\int_{(0,\infty)} (1
\wedge
x^2) \pi(dx) < \infty$.
The representation~\eqref{e:bm1}, having the property
$\Psi(0^+) = 0$, assumes no killing for the associated
CSBP; cf.~\cite{Kyprianou}.

Due to the nature of the Lamperti transform, $Z(t,x)$ satisfies
%
\begin{equation}
\label{e:laplace1} \EE\bigl(e^{-q Z(t,x)}\bigr) = e^{-x\vp(t,q)},
\end{equation}
where the spatial Laplace exponent $\vp$ solves the backward equation
%
\begin{equation}
\label{e:back1} \del_t\vp(t,q) = -\Psi\bigl(\vp(t,q)\bigr),\qquad q\in(0,
\infty), t>0.
\end{equation}
Corresponding to $Z(0,x)=x$, the initial data takes the form $\vp(0,q)=q$.
It follows that $x \mapsto Z(t,x)$ is an increasing process with
independent and stationary increments. As the Laplace exponent of a
subordinator, $\varphi$ has the L\'evy--Khintchine representation
%
\begin{equation}
\label{e:khintchine} \vp(t,q) = b_t q + \int_{(0,\infty)}
\bigl(1-e^{-qx}\bigr)\nu_t(dx),\qquad q\ge0,
\end{equation}
where $b_t\ge0$ and $\int_{(0,\infty)} (1\wedge x)\nu_t(dx)<\infty$.
The quantities $b_t$ and $\nu_t$ represent the drift coefficient and
the L\'evy jump measure, respectively.
Taking $q \to\infty$ in~\eqref{e:laplace1} one sees that the CSBP
becomes extinct in time $t$ with positive probability (i.e., $\mathbb
{P}[Z(t,x) = 0] > 0$)
if and only if $\vp(t,\infty)<\infty$.
This means that $b_t=0$ and $\rho_t<\infty$, where
\[
\rho_t = \langle\nu_t, 1 \rangle\defeq\int
_{(0,\infty)}\nu_t(dx).
\]
(See Proposition~\ref{cmdphi} for a characterization of branching
mechanisms of this type.)

In the present work, we restrict our attention to the class of CSBPs
for which the branching mechanism $\Psi$ has the property
%
\begin{equation}
\label{e:conserve} \Psi^\prime\bigl(0^+\bigr) = \alpha- \int
_{[1,\infty)} x \pi(dx) > - \infty.
\end{equation}
That is, we assume $\Psi$ has the representation
%
\begin{equation}
\label{e:bm} \Psi(u) = \alphabr u + \betabr u^2 + \int
_{(0, \infty)} \bigl(e^{-ux} - 1 + ux\bigr) \pi(dx),
\end{equation}
where $\alphabr\in\R$, $\betabr\geq0$, and the \textit{branching
measure} $\pi(dx)$ verifies
%
\begin{equation}
\label{e:xminx2} \int_{(0,\infty)} \bigl(x \wedge x^2\bigr)
\pi(dx) < \infty.
\end{equation}
As shown in \cite{Grey,Kyprianou}, the CSBP associated
to~\eqref{e:bm}--\eqref{e:xminx2} is \textit{conservative} in the
sense that
$\PP(Z(t,x) < \infty) = 1$ for all $t >0$.
Of primary interest is the case of \textit{critical}
branching, which is distinguished by the property $\EE(Z(t,x))=x$, and
corresponds
here to the value $\alphabr= 0$.

\subsection{A generalized Smoluchowski coagulation equation}

The connection between branching and coagulation was described by
Bertoin and Le Gall
in~\cite{BLG06} as follows. Informally, the L\'evy measure $\nu_t(dx)$
corresponds to
the ``size distribution'' of the set of descendants of a single
individual at the initial time 0.
A more precise interpretation, when $b_t=0$,
is that $Z(t,x)$ is the sum of atoms of a Poisson measure on $(0,\infty
)$ with intensity $x\nu_t(dx)$.
Based on the study of the genealogy of CSBPs as in~\cite{DLG02} for example,
each of these atoms may be interpreted as the size of a clan of
individuals at time $t$ that
have the same ancestor at the initial time. [It is also possible to
interpret $\nu_t(dx)$ as
a continuum limit of scaled size distributions of clans descended from
a single ancestor
in a family of Galton--Watson processes. But precise discussion of this point
lies outside the present paper's scope, and is left for future work.]




As shown in \cite{BLG06}, the L\'evy measure of a critical CSBP
which becomes extinct almost surely satisfies a
generalized type of \textit{Smoluchowski coagulation equation}. This
equation belongs to a general class of coagulation models that account
for the simultaneous merging of $k$ clusters with (possibly time-dependent)
rate $R_k$.
Specifically, the weak form of this equation is
%
\begin{equation}
\label{e:gs} \frac{d\langle\nu_t, f \rangle}{dt} = \sum_{k\ge2}
R_k I_k(\nu_t,f)\qquad \mbox{for all } f \in C
\bigl([0, \infty]\bigr).
\end{equation}
Here
%
\begin{equation}
\label{d:Ik} I_k(\nu,f) = \int_{(0,\infty)^k}
\Biggl(f(x_1+\cdots+x_k)-\sum
_{i=1}^k f(x_i) \Biggr) \prod
_{i=1}^k \frac{\nu(dx_i)}{\langle\nu, 1 \rangle}
\end{equation}
represents the expected change in the moment
\[
\langle\nu,f \rangle \defeq\int_{(0,\infty)} f(x) \nu(dx)
\]
upon merger of $k$ clusters with size distribution $\nu$.
For the evolution equation of the L\'evy measure of a critical CSBP
which becomes extinct almost surely, the rate constants $R_k$ have a
particular Poissonian structure expressed
in terms of the branching mechanism and the total number
$\rho_t = \langle\nu_t, 1 \rangle$.
Namely $R_k=R_k(\rho_t)$ where 
%
\begin{equation}
\label{d:Rk} R_k(\rho) = \frac{(-\rho)^k \Psi^{(k)}(\rho)}{k!} = \int
_{(0,\infty)} \frac{
(\rho y)^k
}{k!} e^{-\rho y} \pi(dy) +
\delta_{k2} \betabr\rho^2.
\end{equation}
Here, $\betabr$ is the diffusion constant appearing in~\eqref{e:bm},
and $\delta_{k2}$ is the Kronecker delta function, which is zero for
$k\ge3$.
Combining the relations~\eqref{e:gs} and~\eqref{d:Rk} gives the
coagulation equation
%
\begin{equation}
\label{e:gsmain} \frac{d\langle\nu_t, f \rangle}{dt} = \sum_{k=2}^{\infty}
\frac
{(-\langle\nu_t, 1 \rangle)^k \Psi^{(k)}(\langle\nu_t, 1 \rangle)
}{k!} I_k(\nu_t,f).
\end{equation}

In the case of the special branching mechanism $\Psi(u) = u^2$, we
recover the classical Smoluchowski coagulation equation with rate
kernel $K(x,y) = 2$. Also, note that a L\'evy measure solution
of~\eqref
{e:gsmain} represents a kind of \textit{fundamental solution} for
the coagulation equation, having the special property that as $t\to0$
the measure $x\nu_t(dx)$ converges weakly to a delta function
at the origin; see Remark~\ref{massdelta}.

\subsection{Results and organization}

\subsubsection{Characterization of scaling limits for coagulation}
Our main results relate to long-time scaling limits
of measure solutions of the coagulation equation~\eqref{e:gsmain} where
$\Psi$ is a critical branching mechanism for a CSBP which becomes
extinct almost surely.
That is, we investigate the existence of dynamic scaling limits of the form
%
\begin{equation}
\label{l:scalelim} \alpha(t) \nu_{t} \bigl( \lambda(t)^{-1} \,dx
\bigr) \to\hat\nu(dx)\qquad \mbox{as } t \to\infty,
\end{equation}
for functions $\alpha, \lambda>0$ and a finite measure $\hat\nu$.
We show that the existence of nondegenerate limits is fundamentally
linked to two conditions:
\begin{longlist}[(ii)]
\item[(i)] \textit{regular variation of $\Psi$ at zero with index
$\gamma
\in(1,2]$};
\item[(ii)] \textit{regular variation of the mass distribution function
$\int_0^x y\nu_t(dy)$
at infinity with index $1-\rho$, where $\rho\in(0,1]$}.
\end{longlist}

First, assuming condition (i) holds, we prove (Theorem~\ref{main1}) that
scaling limits of form \eqref{l:scalelim} exist if and only if
condition (ii) holds at some initial time $t = t_0 \geq0$. Since
initial data satisfying (ii) are easily constructed, condition (i) gives
a sufficient condition under which~\eqref{e:gsmain} admits nontrivial
scaling solutions. The remarkable fact (Theorem~\ref{funthm}) is that
condition (i) is both \textit{necessary and sufficient} for the scaling
limit~\eqref{l:scalelim} to exist when $\nu_t$ is the \textit{fundamental
solution} (defined in Section~\ref{s:cmd}).

The theorems cited above also provide a precise characterization of the
limiting measure $\hat\nu$.
Specifically, we show that (i) and~\eqref{l:scalelim} together imply
that there exist constants $c_\lambda> 0$ and $\rho\in(0,1]$, the
latter given by (ii), such that
%
\begin{equation}
\label{mittagnu} \hat\nu(dx) = \langle\hat\nu, 1 \rangle F_{\gamma, \rho}\bigl(
\langle \hat \nu, 1 \rangle^{{1}/{\rho}} c_\lambda^{-1} \,dx
\bigr),
\end{equation}
where $F_{\gamma, \rho}$ is a generalized Mittag--Leffler probability
distribution given by
%
\begin{equation}
\label{gmlintro} F_{\gamma,\rho}(x) = \sum_{k=1}^{\infty}
\frac{(r)_k}{k!} \frac{(-1)^{k+1} x^{sk}}{\Gamma(sk + 1)},
\end{equation}
where
$r=(\gamma-1)^{-1}$, $s=\rho(\gamma-1)$, and
$(r)_k$ denotes the Pochhammer symbol
\[
(r)_k = r(r+1) (r+2)\cdots(r + k - 1).
\]
Moreover, the corresponding solution $\nu_t$ is \textit{asymptotically
self-similar} in the sense that for all $t>0$,
%
\begin{equation}
\label{l:scalelim2} \alpha(\tau) \nu_{\tau t} \bigl( \lambda(
\tau)^{-1} \,dx \bigr) \to t^{{1}/{(1-\gamma)}}\hat\nu\bigl(
t^{{1}/{(\rho(1-\gamma))}}\,dx\bigr)
\end{equation}
as $\tau\to\infty$.
In particular, the limiting function in~\eqref{l:scalelim2}
belongs to the family
of self-similar solutions
of~\eqref{e:gsmain} with homogeneous branching mechanism of the form
$\hat\Psi(u) = \beta u^\gamma$, where $\beta= (\gamma-1)^{-1}
\langle
\hat\nu, 1 \rangle^{1-\gamma}$.
These solutions have the form
\[
\nu_t(dx) = a(t) F_{\gamma, \rho}\bigl(a(t)^{{1}/{\rho}}
c_\lambda ^{-1} \,dx\bigr),\qquad  a(t) = \bigl[\beta(\gamma-1)t
\bigr]^{{1}/{(1-\gamma)}},
\]
which generalizes the one-parameter family obtained in \cite{MP04}
corresponding to the classical Smoluchowski equation, with $\gamma= 2$
and $c_\lambda= 1$.

\subsubsection{Limit theorems for critical CSBPs}

Theorems \ref{main1} and \ref{funthm} establish a necessary and
sufficient condition for the existence of nondegenerate scaling limits
of fundamental solutions, namely, condition (i), above. We now describe
two rather direct consequences of this fact in terms of scaling limits
of the corresponding CSBP.

First, given a CSBP $Z(t,x)$ for which the corresponding L\'{e}vy
measure is a fundamental solution of~\eqref{e:gsmain}, we consider
scaling limits of the form
%
\begin{equation}
\label{ssprocesslim} \lambda(t)Z\bigl(t, \alpha(t)x\bigr)
\mathop{\rightarrow}^{ \mathcal{L}}
\hat Z(x),
\end{equation}
with $\alpha(t)\to\infty$ and $\lambda(t)\to0$
as $t \to\infty$.
That is, we scale by a factor of $\lambda(t)$ the total population at
time $t$ descended from an initial population of size $\alpha(t)x$, %
and we investigate the convergence in law of the rescaled process, with
parameter $x$, to a nondegenerate L\'{e}vy process $\hat Z$.
As above, we prove that such a limit exists if and only if condition
(i) holds. This is Theorem~\ref{ssCSBP}. In particular, if~\eqref
{ssprocesslim} holds, then for each $t >0$,
%
\begin{equation}
\label{ssprocess3} \lambda(\tau)Z\bigl(\tau t, \alpha(\tau) x\bigr)
\mathop{\rightarrow}^ \mathcal{L}
t^{{1}/{(\gamma- 1)}} \hat Z\bigl(t^{{1}/{(1-\gamma)}}x\bigr)
\end{equation}
as~$\tau\to\infty$, and the right-hand side is equal in law to the
CSBP with L\'{e}vy measure given by $t^{{1}/{(1-\gamma)}}\hat\mu(
t^{{1}/{(1-\gamma)}}\,dx)$, where $\hat\mu$ is the L\'{e}vy measure of
$\hat Z$. In this way, we establish the self-similar form of the
limiting CSBP.

Alternatively, one can consider initial population as fixed, and obtain
a conditional limit theorem for critical continuous-state branching
processes conditioned on nonextinction.
In the context of discrete-state branching, several authors \cite
{Borovkov,Pakes,Slack72} have investigated limits of the form
%
\begin{equation}
\label{l:discrete} \PP\bigl(\lambda(t)Z_t \leq x | Z_t > 0
\bigr) \to F(x)
\end{equation}
as $t \to\infty$, where $Z_t$ is the branching process, and $F$ is a
nondegenerate distribution function on $(0,\infty)$. By various
techniques (our own being most similar to a method of Borovkov~\cite
{Borovkov}), the
authors prove that for the special scaling function $\lambda(t) = \PP(Z_t
> 0)$ a limit of the form~\eqref{l:discrete} exists if and only if the
process $Z_t$ has an offspring law corresponding to a regularly varying
probability generating function.
The question of whether the same regular variation condition is implied for
a general scaling function $\lambda(t) \to0$ was left open by
Pakes~\cite{Pakes}.
Theorem~\ref{limCSBP} provides an affirmative answer to the
continuous-state analog of the question posed by Pakes as an easy
corollary of Theorem~\ref{funthm}.

Also implied by Theorem~\ref{limCSBP} are the conditional limit
theorems obtained by Kyprianou \cite{Kyprianou2} for critical CSBPs
with power-law branching mechanism (the so-called $\alpha$-stable
case), and those obtained by Li \cite{Li00} for critical CSBPs with the
property $\Psi^{\prime\prime}(0^+) < \infty$. In all cases above,
including the discrete cases previously mentioned, limiting
distributions are characterized by relations of the form~\eqref{gmlintro}.

Let us note that, by comparison, noncritical CSBPs admit scaling
limits of a simpler form. Indeed, a well-known result of Grey~\cite
{Grey} states that for any supercritical CSBP with $\Psi^{\prime}(0^+)
> -\infty$, and for any critical or subcritical CSBP with $\Psi
^{\prime
}(\infty) < \infty$
[in the latter case, the CSBP remains positive almost surely---see
Proposition~\ref{cmdphi}(i)], there exists a scaling limit of the
form~\eqref{ssprocesslim}, where $\alpha(t) = 1$ and $\varphi(t,
\lambda
(t)) = \mathrm{const}$., with $\varphi$ solving~\eqref{e:back1}. On the other
hand, it follows directly from the work of Lambert~\cite{Lambert} that
any subcritical CSBP which becomes extinct almost surely admits a limit
of the form~\eqref{ssprocesslim} with scaling functions given by
$\alpha
(t) = 1/\varphi(t, \infty)$ and $\lambda(t) = 1$. In contrast with
Theorem~\ref{ssCSBP}, only one nontrivial scaling function is needed in
each of the cases above. 

\subsubsection{Well-posedness}

For the sake of completeness we also give an account of well-posedness
for the coagulation equation. That is, we establish the existence and
uniqueness of weak solutions of~\eqref{e:gsmain} when $\Psi$ is a
critical branching mechanism and the initial data is a finite measure
(Corollary~\ref{coreu}). Here, we essentially tie together the ideas of
Bertoin and Le Gall \cite{BLG06}, Norris \cite{Norris} and Menon and
Pego~\cite{MP04} with a few new proofs and observations. In particular,
we provide a simple and direct account of well-posedness for the
evolution of the L\'{e}vy measure $\nu_t$ in~\eqref{e:khintchine}
(Proposition~\ref{cmdphi}). The point is that equation \qref{e:back1}
preserves the property that $\vp(t,\cdot)$ has a completely monotone
derivative. For an initial cluster size distribution given by a finite
measure, the latter property amounts to a well-posedness result for
Smoluchowski dynamics.

\subsubsection{Outline of the paper}
We now give a brief outline of the paper. Section~\ref{s:prelim}
delineates some basic notation and definitions. Section~\ref{s:eu} is
dedicated to well-posedness results. In Section~\ref{s:gml},
we derive the family of self-similar solutions to~\eqref{e:gsmain}
associated with generalized Mittag--Leffler laws. Section~\ref{s:main1}
is dedicated to a study of scaling limits of the form~\eqref
{l:scalelim} in the case of a regularly varying branching mechanism
$\Psi$. In Section~\ref{s:main2}, we consider scaling limits of
fundamental solutions. Finally, in Section~\ref{s:limCSBP}, we reformulate
our scaling results
in terms of limit theorems for CSBPs.

\section{Preliminaries}\label{s:prelim}
We begin with some notation that will be repeatedly used throughout
this paper.
Let $\rplus$ be the open interval $(0,\infty)$, and $\overline
{\rplus}$
denote the extended interval $[0, \infty]$.
We use $C(\overline{\rplus})$ to denote the space of continuous
functions $f \dvtx\overline{\rplus} \to\R$, equipped with the
$L^\infty$-norm.

Three spaces of measures that arise often in our context are:
\begin{itemize}
\item
The space $\mathcal{M}_+$, consisting of positive Radon measures on
$\rplus$ equipped with the \emph{vague topology}.
We recall that if $\mu, \mu_1, \mu_2, \ldots$ are measures in
$\mathcal{M}_+$, then $\mu_n$ \textit{converges vaguely} to $\mu$
as $n
\to\infty$ (denoted by $\mu_n \mathop{\xrightarrow}\limits^{v} \mu$) if $\langle
\mu_n, \phi\rangle\to\langle\mu, \phi\rangle$ for all $\phi\in
C_c(\rplus)$.
Here $C_c(\rplus)$ denotes the space of continuous functions on
$\rplus
$ with compact support, and $\langle\mu, f \rangle$ denotes the
integral of $f$ with respect to the measure $\mu$.

\item
The space $\mathcal{M}_F$, consisting of finite positive measures on
$\rplus$, equipped with the \emph{weak topology}.
That is, if $\mu$, $\mu_1$, $\mu_2 ,\ldots$ are measures in
$\mathcal{M}_F$, then we say $\mu_n$ \textit{converges weakly} to
$\mu$ as
$n \to\infty$ (denoted by $\mu_n \mathop{\xrightarrow}\limits^{w} \mu$) if $\langle
\mu
_n, \phi\rangle\to\langle\mu, \phi\rangle$ for all $\phi\in
C_b(\rplus)$.
Here $C_b(\rplus)$ denotes the space of bounded continuous functions
on $\rplus$.

\item The space $\Mplus$, consisting of the set of measures $\mu\in
\mathcal{M}_+$ such that
\[
\int_{(0,\infty)} (1 \wedge x) \mu(dx) < \infty.
\]
\end{itemize}

\subsection{Branching mechanisms and Bernstein functions}

\begin{definition}
We say a function $\Psi\colon\rplus\to\R$ is a \emph{branching
mechanism} if it admits the representation
%
\begin{equation}
\label{eqnPsiRep} \Psi(u) = \alphabr u + \betabr u^2 + \intE
\bigl(e^{-ux} - 1 + ux\bigr) \pi(dx),
\end{equation}
where $\alphabr\in\R$, $\betabr\geq0$ and $\pi\in\mathcal{M}_+$
with $\intE(x \wedge
x^2) \pi(dx) < \infty$ (equivalently, $x \pi\in\Mplus$). The
branching mechanism is called \textit{critical, subcritical, or
supercritical} according to the conditions $\alphabr= 0$, $\alphabr>
0$, or $\alphabr< 0$, respectively.
\end{definition}

\begin{definition}\label{d:bernstein}
We say that $f \in C^\infty(\rplus)$ is a \textit{Bernstein
function} if
$f \geq0$ and $(-1)^k f^{(k+1)} \geq0$ for all integers $k \geq0$.
\end{definition}

In other words, $f$ is a Bernstein function if $f$ is nonnegative, and
$f^{\prime}$ is completely monotone.
It is well known (see, e.g.,~\cite{SSV}) that a function is
Bernstein if and only if it admits the representation
%
\begin{equation}
\label{eqnBernstein} f(q) = a + bq + \int_{\rplus}
\bigl(1-e^{-qx}\bigr) \mu(dx),
\end{equation}
where $a, b \geq0$ and $\mu\in\Mplus$.
Note that $f$ is strictly positive if and only if $(a, b, \mu) \ne
(0,0,0)$. On the other hand, a function $\Psi\colon\rplus\to\rplus$
belongs to the set of critical or subcritical branching mechanisms if
and only if $\Psi(0^+) = 0$ and $\Psi^{\prime}$ is a positive Bernstein
function.
The following lemma, for which we have found no obvious reference,
establishes a deeper relation between set of critical or subcritical
branching mechanisms and Bernstein functions.

\begin{lemma}\label{cmd1}
Assume $\Psi\colon\rplus\to\rplus$ is a critical or subcritical
branching mechanism. Then, the inverse function $\Psi^{-1}$ is a
Bernstein function.
\end{lemma}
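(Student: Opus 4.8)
The plan is to verify the definition of a Bernstein function directly. Nonnegativity of $\Psi\inv$ is immediate since its range lies in $(0,\infty)$, so everything reduces to showing that $(\Psi\inv)'$ is completely monotone, i.e.
\[
(-1)^{j-1}(\Psi\inv)^{(j)}(q)\ge 0\qquad\text{for all }j\ge 1,\ q>0.
\]
First one records that $\Psi\inv$ is well defined and smooth: from \eqref{eqnPsiRep} with $\alphabr\ge 0$, the derivative $\Psi'(u)=\alphabr+2\betabr u+\int_{(0,\infty)}x(1-e^{-ux})\pi(dx)$ is strictly positive on $(0,\infty)$ (it vanishes only in the excluded case $\Psi\equiv 0$), and since $\Psi''\ge 0$ the function $\Psi$ is convex and increasing with $\Psi(0^+)=0$, hence $\Psi'(u)\ge\Psi'(u_0)>0$ eventually forces $\Psi(\infty)=\infty$; thus $\Psi$ is a $C^\infty$ increasing bijection of $(0,\infty)$ and, by the inverse function theorem, $g:=\Psi\inv\in C^\infty((0,\infty))$ with $g'(q)=1/\Psi'(g(q))>0$. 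This is the base case $j=1$ of an induction on $j$.

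For the inductive step, assume $(-1)^{i-1}g^{(i)}\ge 0$ on $(0,\infty)$ for all $1\le i\le k$; we must deduce $(-1)^{k}g^{(k+1)}\ge 0$. Differentiate the identity $\Psi(g(q))=q$ exactly $k+1$ times using Fa\`a di Bruno's formula. Since $k+1\ge 2$ the left side vanishes, and separating off the one-block partition yields
\[
\Psi'(g(q))\,g^{(k+1)}(q)=-\sum_{\mathcal P}\ \Psi^{(|\mathcal P|)}(g(q))\prod_{B\in\mathcal P}g^{(|B|)}(q),
\]
the sum being over all set partitions $\mathcal P$ of $\{1,\dots,k+1\}$ with at least two blocks (the Fa\`a di Bruno coefficients are positive). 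The point is that every term on the right carries the \emph{same} sign. Indeed, from \eqref{eqnPsiRep} one has $\Psi^{(2)}(u)=2\betabr+\int_{(0,\infty)}x^2e^{-ux}\pi(dx)\ge 0$ and $\Psi^{(p)}(u)=(-1)^p\int_{(0,\infty)}x^pe^{-ux}\pi(dx)$ for $p\ge 3$, so $\Psi^{(|\mathcal P|)}(g)$ has sign $(-1)^{|\mathcal P|}$ for $|\mathcal P|\ge 2$; and because $\mathcal P$ has at least two blocks, each block satisfies $|B|\le k$, so the inductive hypothesis gives $\operatorname{sign}g^{(|B|)}=(-1)^{|B|-1}$. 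Hence each summand has sign $(-1)^{|\mathcal P|}\prod_{B\in\mathcal P}(-1)^{|B|-1}=(-1)^{|\mathcal P|}(-1)^{(k+1)-|\mathcal P|}=(-1)^{k+1}$, so the whole sum has sign $(-1)^{k+1}$ and therefore $\Psi'(g)\,g^{(k+1)}$ has sign $(-1)^{k}$. Dividing by $\Psi'(g)>0$ closes the induction, giving that $g'$ is completely monotone and hence that $g=\Psi\inv$ is Bernstein.

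The main obstacle is really the sign bookkeeping, and the key is to organize it through Fa\`a di Bruno applied to $\Psi\circ\Psi\inv=\mathrm{id}$: the exponent of $\Psi$ occurring in a term and the block sizes are constrained to sum compatibly, so the two sources of signs collapse to the single power $(-1)^{k+1}$ independent of $\mathcal P$, leaving no cancellation to control. (A more naive route, writing $(\Psi\inv)^{(k)}$ as a rational expression in $\Psi',\Psi'',\dots$ and tracking monomials, does meet apparent sign cancellations, which is exactly what this organization circumvents.) As a sanity check, $\Psi\inv$ also has a probabilistic meaning: for a subcritical branching mechanism it is the Laplace exponent in $x$ of the total progeny $\int_0^\infty Z(t,x)\,dt$ of the associated CSBP, and the general critical case would follow from this by approximating $\Psi$ with $\Psi+\epsilon\,\mathrm{id}$ and using closure of Bernstein functions under pointwise limits; the analytic induction above is preferable here, being entirely self-contained.
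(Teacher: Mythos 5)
Your proof is correct, and it takes a genuinely different route from the paper's. The paper's argument is a three-line affair: observe that $g:=1/\Psi'$ is completely monotone (as the composition of $x\mapsto 1/x$ with the positive Bernstein function $\Psi'$), note that $f:=\Psi\inv$ satisfies $f'=g\circ f$ with $f>0$, and then cite Menon--Pego's Lemma 5.5 to conclude $f'$ is completely monotone. You instead bypass that external lemma and work directly with the identity $\Psi\circ\Psi\inv=\Id$, applying Fa\`a di Bruno and an induction in which the sign $(-1)^{|\mathcal P|}$ from $\Psi^{(|\mathcal P|)}$ and the signs $(-1)^{|B|-1}$ from the blocks collapse to the single $\tau$-independent power $(-1)^{k+1}$, so every term of the partition sum contributes with the same sign and no cancellation has to be controlled. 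The sign bookkeeping checks out: for $p\ge 2$, $\operatorname{sign}\Psi^{(p)}=(-1)^p$ from the representation \eqref{eqnPsiRep}, each block size is at most $k$ so the inductive hypothesis applies, and the exponents sum to $(-1)^{|\mathcal P|}(-1)^{(k+1)-|\mathcal P|}=(-1)^{k+1}$, giving $(-1)^k g^{(k+1)}\ge 0$ after the $-1$ and the division by $\Psi'(g)>0$. The preliminary step establishing that $\Psi$ is a smooth increasing bijection of $\rplus$ (using $\Psi'>0$, convexity, $\Psi(0^+)=0$, $\Psi(\infty)=\infty$) is also sound. What the paper's route buys is brevity at the cost of an opaque citation; your route is entirely self-contained and in effect unrolls the proof of the cited ODE lemma, which in the literature is established by a cognate inductive sign calculus — so the underlying mechanism is the same, but the organization (composition identity versus autonomous first-order ODE) and the level of self-containment differ.
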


\begin{pf} Let $f=\Psi^{-1}$ and $g=1/\Psi'$.
Note $g$ is completely monotone,
since $x\mapsto1/x$ is completely monotone, and
$\Psi'$ is a positive Bernstein function, as observed above.
Since $f$ is positive and $f'=g\circ f$,
it directly follows that $f'$ is completely monotone,
from \cite{MP08}, Lemma~5.5.
\end{pf}

\section{Well-posedness for Smoluchowski dynamics}\label{s:eu}

In this section we define a notion of weak solution for the generalized
Smoluchowski equation~\eqref{e:gsmain}.
As we will show, the question of existence of weak solutions
amounts to a study of~\eqref{e:back1}.
Several estimates appearing in Sections~\ref{s:weak} and \ref
{s:exist} have either been sketched in~\cite{BLG06} from a probabilistic point of view, or are straightforward
extensions of the well-posedness theory in \cite{MP04}.
The originality of our treatment lies mainly in Lemma~\ref{cmd1} and
its use in
the proof of Proposition~\ref{cmdphi}.
The remaining estimates have been simplified by various degrees and
organized for convenience of the reader.

\subsection{Weak solutions}\label{s:weak}

In this section, we consider a critical branching mechanism $\Psi$
having the representation~\eqref{eqnPsiRep} with $\alphabr= 0$.
Following the approach in \cite{MP04,Norris}, we associate to each
finite, positive measure $\nu\in\mathcal{M}_F$ the continuous linear
functional $L(\nu)\dvtx C(\overline{\rplus}) \to\mathbb{R}$,
defined by
%
\begin{eqnarray}
\label{eqnPsiWeak} \bigl\langle L(\nu), f \bigr\rangle&=& \sum
_{k\ge2} R_k\bigl(\langle\nu, 1 \rangle\bigr)
I_k(\nu,f) 
\nonumber
\\[-8pt]
\\[-8pt]
\nonumber
&= &\sum
_{k=2}^{\infty} \frac{(-\langle\nu, 1 \rangle)^k \Psi
^{(k)}(\langle\nu, 1 \rangle) }{k!} I_k(\nu,f),
\end{eqnarray}
where $I_k$ and $R_k$ are defined by~\eqref{d:Ik} and~\eqref{d:Rk},
respectively.
To verify continuity of $L(\nu)$, we observe
$|I_k(\nu,f)|\le(k+1)\|f\|_{C(\overline{\rplus})}$.
Thus for $m~=~\langle\nu, 1 \rangle$, equations~\eqref{eqnPsiRep}
and~\eqref{eqnPsiWeak} give
\begin{eqnarray*}
\bigl| \bigl\langle L(\nu), f \bigr\rangle\bigr| &=& \Biggl\llvert \betabr m^2
I_2(f) + \sum_{k=2}^{\infty}
{I_k(f)}\intE\frac{(mx)^k
}{k!}e^{-m x} \pi(dx)
\Biggr\rrvert
\\
& \leq&\| f \|_{C(\overline{\rplus})}
 \Biggl[3\betabr m^2 + \intE \Biggl[ \sum
_{k=2}^{\infty} \frac{m^k x^k}{(k-1)!} +\sum
_{k=2}^{\infty} \frac{m^k x^k}{k!} \Biggr]
e^{-mx} \pi(dx) \Biggr]
\nonumber
\\
&
=& K(m) \| f \|_{C(\overline{\rplus})},
\nonumber
\end{eqnarray*}
where $ K(m) = 3\betabr m^2 + 2m \Psi^{\prime}(m) - \Psi(m) < \infty$,
establishing continuity of $L(\nu)$.
Observe for future use, that
%
\begin{equation}
\label{cnstK} K(m) = \int_0^m \biggl[
\frac{3}{2}\betabr u+ 2u \Psi^{\prime\prime}(u) + \Psi^{\prime}(u)
\biggr] \,du.
\end{equation}
%
Since $\Psi^{\prime}, \Psi^{\prime\prime} \geq0$, the function
$m\mapsto K(m)$ is positive and increasing.

With this, the natural notion of weak solutions to~\eqref{e:gsmain} is
as follows.

\begin{definition}\label{weaksoln}
We say that a weakly measurable function $\nu\colon\rplus\to
\mathcal{M}_F$
is a \textit{weak solution} of~\eqref{e:gsmain} if
%
\begin{equation}
\label{weakform} \langle\nu_t, f \rangle= \langle\nu_s, f
\rangle+ \int_s^t \bigl\langle L(
\nu_{\tau}), f \bigr\rangle \,d \tau
\end{equation}
for all $t,s > 0$ and for all $f \in C(\overline{\rplus})$. If,
additionally, there exists $\nu_0 \in\mathcal{M}_F$ such that $\nu_t$
converges weakly to $\nu_0$ as $t \to0$, then we say $\nu\colon
[0,\infty) \to\mathcal{M}_F$ is a weak solution of~\eqref{e:gsmain}
with initial data $\nu_0$.
\end{definition}
%

To any function $\nu\colon\rplus\to\Mplus$, we associate the function
%
\begin{equation}
\label{defnu} \varphi(t,q) \defeq\int_{\rplus}
\bigl(1-e^{-qx}\bigr) \nu_t(dx).
\end{equation}
Our next result shows that weak solutions to~\eqref{e:gsmain} are
characterized by~\eqref{e:back1} for the associated function~$\varphi$.

\begin{theorem}\label{existgs1}
Let $\Psi\colon\rplus\to\rplus$ be a critical branching mechanism.
Assume $\nu\colon\rplus\to\mathcal{M}_F$ and that $\varphi$ is related
to $\nu$ by (\ref{defnu}).
Then, $\nu$ is a weak solution of~\eqref{e:gsmain}
if and only if $\varphi$ solves (\ref{e:back1}).
\end{theorem}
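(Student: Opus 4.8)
The plan is to establish the equivalence by connecting the weak formulation \eqref{weakform} directly to a pointwise (in $q$) identity for $\varphi$. The key computational observation is that for each fixed $q > 0$, the function $f_q(x) \defeq 1 - e^{-qx}$ lies in $C(\overline{\rplus})$ (with $f_q(0) = 0$ and $f_q(\infty) = 1$), so testing \eqref{weakform} against $f = f_q$ is legitimate, and $\langle \nu_t, f_q \rangle = \varphi(t,q)$ by \eqref{defnu}. Thus the first step is to compute $\langle L(\nu), f_q \rangle$ explicitly. Using \eqref{d:Ik} and the multiplicativity of the exponential, for $\nu$ a finite measure with mass $m = \langle \nu, 1\rangle$ and $q$-transform $\phi = \langle \nu, f_q\rangle$ one has
\begin{align*}
I_k(\nu, f_q) &= \frac{1}{m^k}\left[\int_{\rplus^k}\Bigl(1 - e^{-q(x_1+\cdots+x_k)}\Bigr)\prod_i \nu(dx_i) - \sum_{i=1}^k \int_{\rplus^k} (1 - e^{-qx_i})\prod_j \nu(dx_j)\right]\\
&= \frac{1}{m^k}\Bigl[ m^k - (m-\phi)^k - k\,m^{k-1}\phi\Bigr] = 1 - \Bigl(1 - \tfrac{\phi}{m}\Bigr)^k - \tfrac{k\phi}{m}.
\end{align*}
Summing against $R_k(m) = (-m)^k \Psi^{(k)}(m)/k!$ and comparing with the Taylor expansion of $\Psi$ about $m$ — namely $\sum_{k\geq 0}\Psi^{(k)}(m)(-\phi)^k/k! = \Psi(m - \phi)$ and $\sum_{k\geq 0}\Psi^{(k)}(m)(-m)^k/k! = \Psi(0^+) = 0$ — one collapses the series to
\[
\langle L(\nu), f_q\rangle = \Psi(m) - \Psi(m - \phi) - \phi\,\Psi'(m) \cdot(\text{terms})\ \longrightarrow\ -\Psi(\phi),
\]
and I expect the drift/first-order terms to cancel precisely because of criticality $\alphabr = 0$ together with $\Psi(0^+)=0$; the upshot should be the clean identity $\langle L(\nu), f_q\rangle = -\Psi(\varphi(t,q))$. (Absolute convergence of the rearrangement is justified by the estimate $|I_k(\nu,f)| \leq (k+1)\|f\|$ already recorded, together with finiteness of $K(m)$.)

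Once this identity is in hand, the ``only if'' direction is essentially immediate: if $\nu$ is a weak solution then \eqref{weakform} with $f = f_q$ reads $\varphi(t,q) = \varphi(s,q) - \int_s^t \Psi(\varphi(\tau,q))\,d\tau$, which is the integral form of \eqref{e:back1}; differentiating in $t$ (the integrand is continuous in $\tau$, since $\tau \mapsto \varphi(\tau,q)$ is continuous — this needs a short argument from weak measurability plus the integral identity, or one can simply work with the integrated form throughout) gives $\partial_t \varphi = -\Psi(\varphi)$. For the ``if'' direction, suppose $\varphi$ solves \eqref{e:back1}. I then need to recover \eqref{weakform} for \emph{all} $f \in C(\overline{\rplus})$, not just the exponentials $f_q$. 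The standard route is a density/approximation argument: finite linear combinations of the $f_q$ together with the constant function $1$ (note $1 = \lim_{q\to\infty} f_q$ pointwise, but uniform approximation needs more care) — more precisely, the algebra generated by $\{e^{-qx} : q \geq 0\}$ — is dense in $C(\overline{\rplus})$ by Stone--Weierstrass, since it separates points of $[0,\infty]$ and contains the constants. For such $f$, \eqref{weakform} holds by linearity from the exponential case. Passing to a general $f \in C(\overline{\rplus})$ then requires uniform control: both sides of \eqref{weakform} are continuous in $f$ with respect to $\|\cdot\|_{C(\overline{\rplus})}$ — the left side trivially, the right side by the bound $|\langle L(\nu_\tau), f\rangle| \leq K(\langle \nu_\tau,1\rangle)\|f\|$ — provided $\tau \mapsto K(\langle\nu_\tau,1\rangle)$ is locally integrable on $(0,\infty)$. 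Since $\langle \nu_\tau, 1\rangle = \varphi(\tau,\infty)$ (or $\lim_{q\to\infty}\varphi(\tau,q)$) and $\varphi$ solves \eqref{e:back1} with $\Psi \geq 0$, the mass is non-increasing in $\tau$, hence bounded on compact subsets of $(0,\infty)$, hence $K(\langle\nu_\tau,1\rangle)$ is bounded there; this gives the required uniform estimate and lets me pass to the limit.

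The main obstacle, I expect, is \emph{not} the algebra of the series manipulation (which is bookkeeping once one writes the Taylor expansion of $\Psi$), but rather two softer regularity points that must be handled carefully: first, that $\nu\colon\rplus \to \mathcal{M}_F$ being merely weakly measurable and satisfying the integral identity \eqref{weakform} forces enough continuity of $t \mapsto \langle\nu_t, f\rangle$ to differentiate and to integrate continuous integrands — this follows because the integral identity itself exhibits $t \mapsto \langle\nu_t,f\rangle$ as an indefinite integral, hence absolutely continuous; and second, the interchange of the sum over $k$ with the integrals defining $I_k$ and with the $\pi(dx)$ integral in $R_k$, which is where the $(x\wedge x^2)$-integrability of $\pi$ and the finiteness of $K(m)$ do the work (exactly as in the continuity verification for $L(\nu)$ preceding Definition~\ref{weaksoln}). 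I would also double-check the endpoint behavior at $x = 0$ and $x = \infty$ in identifying $\langle\nu_t, f_q\rangle$ with $\varphi(t,q)$ and $\langle\nu_t,1\rangle$ with $\varphi(t,\infty)$, since $\nu_t$ puts no mass at $0$ or $\infty$ but $f_q$ and $1$ are being evaluated on the closure. Modulo these points, the proof is the combination: (1) evaluate $L$ on exponentials to get $-\Psi\circ\varphi$; (2) read off the ODE, giving ``only if''; (3) Stone--Weierstrass plus the uniform $K(m)$-bound to upgrade back to all $f$, giving ``if''.
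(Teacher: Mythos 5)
Your proof follows the paper's approach exactly: evaluate $L(\nu)$ on the exponential test functions $f_q = 1-e^{-qx}$ (including $f_\infty = 1$) by Taylor-expanding $\Psi$ about $m = \langle\nu,1\rangle$ to obtain $\langle L(\nu), f_q\rangle = -\Psi(\varphi(t,q))$, then upgrade to general $f\in C(\overline{\rplus})$ by density using the uniform bound $|\langle L(\nu),f\rangle| \le K(\langle\nu,1\rangle)\|f\|$ together with monotonicity of $t\mapsto\langle\nu_t,1\rangle$; your Stone--Weierstrass invocation is equivalent to the paper's density claim since the algebra generated by $\{e^{-qx}\}$ is closed under products and hence equals $\operatorname{span}\{f_q: 0<q\le\infty\}$. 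One slip in your intermediate display: the relevant Taylor sums are $\sum_k \tfrac{\Psi^{(k)}(m)}{k!}(\varphi-m)^k = \Psi(\varphi)$ (not $\Psi(m-\varphi)$), $\sum_k \tfrac{\Psi^{(k)}(m)}{k!}(-m)^k = \Psi(0)=0$, and the $k$-weighted sum giving $\Psi'(0^+)=\alphabr=0$ (this is where criticality enters) --- though you do correctly state the resulting identity.
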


\begin{pf}
Let $\nu$ and $\varphi$ be as described.
First, we claim that
$\varphi$ satisfies~\eqref{e:back1} if and only if \eqref{weakform}
holds for the family of test functions
$f_q(x) \defeq1-e^{-qx}$, $0<q\leq\infty$.
Note, carefully, that we include the function
$f_{\infty} =1$ in this family. Indeed, since $\nu_t \in\mathcal
{M}_F$, we have
\[
\varphi(t,q) \to\varphi(t, \infty) \defeq\langle\nu_t, 1 \rangle<
\infty
\]
as $q\to\infty$, so that if~\eqref{e:back1} holds for $0 < q <
\infty$,
it also holds for $q = \infty$.

Note that since $\Psi$ is a critical branching mechanism given by
\eqref{eqnPsiRep}, it has an analytic extension defined in the right
half of
the complex plane.
Thus, the Taylor series of $\Psi(u)$ expanded about any $m>0$ converges
whenever $0<u<m$ and gives
%
\begin{equation}
\label{eqnPsiExpand}\qquad \Psi(u) = \sum_{k=0}^\infty
\frac{\Psi^{(k)}(m)}{k!} (u-m)^k,\qquad \Psi'(u) = \sum
_{k=0}^\infty\frac{\Psi^{(k)}(m)}{k!} (u-m)^{k-1}k.
\end{equation}
These formulas hold also for $u=0$, with $\Psi(0)=0$, $\Psi'(0)=0$,
due to the consistent sign of the terms for $k\ge2$.
Writing $m=\langle\nu_t,1 \rangle$, we compute that for $0 < q \leq
\infty$,
\begin{eqnarray*}
m^k I_k(\nu_t,f_q) &=& \int
_{\rplus^k} \Biggl[ f_q \Biggl(\sum
_{i=1}^k x_i \Biggr) - \sum
_{i=1}^k f_q(x_i)
\Biggr] \,d\nu_t^k
\\
&=& \int_{\rplus^k} \Biggl[ 1 - \prod_{i=1}^k
e^{-q x_i} -\sum_{i=1}^k \bigl(1 -
e^{-q x_i} \bigr) \Biggr] \,d\nu_t^k
\\
&= &m^k - \bigl(m-\varphi(t,q)
\bigr)^k - k m^{k-1}\varphi(t,q).
\end{eqnarray*}
Using this expression (which vanishes for $k=0$ and $1$)
in \eqref{eqnPsiWeak} and invoking \eqref{eqnPsiExpand}, since
$0<\varphi(t,q)<m$ we find
\begin{eqnarray*}
\bigl\langle L(\nu_t), f_q \bigr\rangle 
&=& \sum_{k=0}^\infty
\frac{(-1)^k \Psi^{(k)}( m)}{k!} \bigl[m^k - \bigl( m - \varphi(t, q)
\bigr)^k - k m^{k-1} \varphi(t, q) \bigr]
\\
&=& \Psi(0) - \Psi\bigl(\varphi(t, q)\bigr) + \varphi(t, q) \Psi'(0)
= - \Psi \circ\varphi(t, q).
\end{eqnarray*}
Therefore~\eqref{weakform} holds for $f = f_q$ if and only if
\[
\varphi(t,q) - \varphi(s,q) = -\int_s^t \Psi
\bigl(\varphi(\tau, q)\bigr) \,d \tau
\]
for all $s, t > 0$. This proves the claim.

In particular, if $\nu$ is a
weak solution of~\eqref{e:gsmain}, then $\varphi$ solves~\eqref{e:back1}.
On the other hand, if $\varphi$ solves~\eqref{e:back1},
then~\eqref{weakform} holds for all test functions $f_q$, $0 < q \leq
\infty$.
This family of test functions spans a dense subset of the metric space
$C(\overline{\rplus})$. Now, given $f \in C(\overline{\rplus})$ and
$\epsilon> 0$, choose $g \in
\sspan\{f_q\dvtx0 < q \leq\infty\}$ such that $\| f - g \| <
\epsilon$. By linearity, (\ref{weakform}) holds for the test
function~$g$. Therefore, assuming for definiteness that $t > s$, we have
\begin{eqnarray*}
&&\biggl\llvert \langle\nu_t, f \rangle- \langle\nu_s,f
\rangle- \int_s^t \bigl\langle L(
\nu_{\tau},f ) \bigr\rangle \,d \tau\biggr\rrvert
\\
&&\qquad = \biggl\llvert \langle\nu_t, f - g \rangle- \langle
\nu_s, f - g \rangle- \int_s^t \bigl
\langle L(\nu_{\tau}),f - g \bigr\rangle \,d \tau\biggr\rrvert
\\
&&\qquad \leq\|f-g\|_{C(\overline{\rplus})} \biggl( \langle\nu_t, 1
\rangle+ \langle\nu_s, 1 \rangle+ \int_s^t
K\bigl(\langle \nu_{\tau}, 1 \rangle\bigr) \,d \tau \biggr)
\\
&&\qquad \leq\epsilon\bigl[2\langle\nu_s, 1 \rangle+ (t-s) K\bigl(
\langle\nu_s, 1 \rangle\bigr)\bigr],
\end{eqnarray*}
where the function $K$ is given by (\ref{cnstK}), and we use
\eqref{e:back1} with $q=\infty$ to infer $\langle\nu_t,1 \rangle
\le\langle\nu_s,1 \rangle$.
Taking $\epsilon\to0$
shows that (\ref{weakform}) holds for all $f \in C(\overline{\rplus})$.
This completes the proof.
\end{pf}

\begin{remark}
Bertoin and Le Gall \cite{BLG06} propose a weaker form of
Smoluchowski's equation that requires only $\nu_t\in\Mplus$,
not $\mathcal{M}_F$, but which still transforms to~\eqref{e:back1}. In
particular, they show that if $\Psi^{\prime}(\infty) = \infty$ (see
Proposition~\ref{cmdphi}, below), then the
L\'evy measure of the associated CSBP verifies this weak form for the
special test functions $f_q$,
$0<q<\infty$. However, there appear to be no obvious estimates
available to deal with a general test function $f \in C(\overline
{\rplus})$.
%
\end{remark}

\subsection{Fundamental solutions}\label{s:cmd}
For any weak solution $\nu\colon\rplus\to\mathcal{M}_F$ of the
generalized Smoluchowski equation \eqref{e:gsmain}, the solution
$\varphi(t,q)$ of \eqref{e:back1} has a finite limit as $t \to0$
whether or not $\nu_t$ has a weak limit as $t \to0$. Indeed, if
$\varphi(t,q_0) \to\infty$ as $t \to0$ for some $q_0>0$, then, by a
translation invariance of solutions, one shows that for any $q>q_0$
there exists $t_q >0$ such that
$\varphi(t,q) = \varphi(t-t_q, q_0) \to\infty$ as $t \to t_q$, which
contradicts $\nu_{t_q} \in\mathcal{M}_F$.

It follows that $\varphi$ has the convenient representation
%
\begin{equation}
\label{fsoln} \varphi(t,q) = \Phi\bigl(t, \varphi(0,q)\bigr),
\end{equation}
where $\varphi(0,q) \defeq\varphi(0^+,q)$ and where $\Phi$ solves the
initial value problem
%
\begin{equation}
\label{IVPphi}\cases{ %
\partial_t
\Phi(t,q) = -\Psi\bigl(\Phi(t,q)\bigr),&  \quad $q \in\rplus,$
\vspace*{2pt}\cr
\Phi(0,q) = q. }
\end{equation}
The functions $\Phi_t=\Phi(t,\cdot)$ have the semigroup property
$\Phi_{t+s}=\Phi_t\circ\Phi_s$ for \mbox{$t, s>0$}.
Because of the composition structure~\eqref{fsoln}, we make the
following definition.

\begin{definition}\label{funsoln}
Assume $\Psi\colon\rplus\to\rplus$ is a critical branching
mechanism. We say that a function $\mu\colon\rplus\to\mathcal{M}_F$
is the \textit{fundamental solution} of the generalized Smoluchowski
equation~\eqref{e:gsmain} if the function
%
\begin{equation}
\label{funphi} \Phi(t,q) = \intE\bigl(1-e^{-qx}\bigr)
\mu_t(dx)
\end{equation}
solves the initial value problem~\eqref{IVPphi}, where $\Phi(0,q)
\defeq\Phi(0^+,q)$.
\end{definition}


The fundamental solution relates solutions of the generalized
Smoluchow\-ski
equation to their initial data via solutions of a linear problem; see
Remark~\ref{subord}
below for details. But first we establish necessary and sufficient
criteria for the existence of a fundamental solution,
and develop the basis for our discussion of
well-posedness theory for weak solutions with initial data.

\begin{definition}
We say that a branching mechanism $\Psi\dvtx\rplus\to\R$
satisfies \emph
{Grey's condition}~\cite{Grey} provided $\Psi(\infty) = \infty$ and
%
\begin{equation}
\label{conditionE} \int_a^{\infty} \frac{1}{\Psi(u)}
\,du < \infty \qquad\mbox{for some $a>0$.}
\end{equation}
\end{definition}

\begin{remark}\label{rmkE}
It is well known that Grey's condition gives a necessary and sufficient
condition under which solutions to \eqref{e:back1} have finite-time
blow-up, backward in time.
We also mention that Bertoin and Le Gall~\cite{BLG06} use the term
\emph
{Condition~E} to describe Grey's condition.
\end{remark}

\begin{proposition}\label{cmdphi}
Let $\Phi$ be the unique solution of the initial value problem~\eqref
{IVPphi}, where $\Psi\colon\rplus\to\R$ is any branching mechanism
of the form~\eqref{eqnPsiRep}.
Then, for each fixed $t \geq0$, the map $\Phi(t, \cdot) \dvtx\rplus
\to
\rplus$ is a Bernstein function. More precisely,
%
\begin{equation}
\label{eqnPhiRep} \Phi(t,q) = b_t q + \intE\bigl(1-e^{-qx}
\bigr) \mu_t(dx)
\end{equation}
for some $b_t \geq0$ and $\mu_t \in\Mplus$. Furthermore, the
following properties hold:
\begin{longlist}[(ii)]
\item[(i)] $b_t = 0$ for some (equivalently all) $t >0$ if and only if
$\Psi^{\prime}(\infty) = \infty$;
\item[(ii)] $b_t = 0$ and $\mu_t \in\mathcal{M}_F$ for some
(equivalently all) $t>0$ if and only if $\Psi$ satisfies Grey's condition.
\end{longlist}
\end{proposition}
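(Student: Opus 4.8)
The plan is to build the Bernstein representation of $\Phi(t,\cdot)$ by combining Lemma~\ref{cmd1} with a monotone-limit argument, then read off (i) and (ii) from the behavior of $\Phi(t,q)$ as $q\to\infty$. First I would establish that $\Phi(t,\cdot)$ is a Bernstein function for each fixed $t\ge 0$. The case $t=0$ is trivial ($\Phi(0,q)=q$). For $t>0$, the cleanest route is to note that along characteristics the solution of~\eqref{IVPphi} satisfies the implicit relation $\int_{\Phi(t,q)}^{q}\frac{du}{\Psi(u)}=t$ on the region where $\Psi>0$; equivalently $G(\Phi(t,q))=G(q)+t$ where $G'=-1/\Psi$. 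One must first handle the possibility that $\Psi$ has a zero on $\rplus$: since $\Psi$ is convex with $\Psi(0^+)=0$, either $\Psi>0$ on all of $\rplus$, or $\Psi$ vanishes at a single point $u_*>0$ (the subcritical-with-negative-drift-like case cannot occur here because $\alphabr$ may be of either sign — I would dispatch this by the standard dichotomy for convex $\Psi$). In the generic case $\Psi>0$ on $\rplus$, $G$ is a decreasing bijection and $\Phi_t=G^{-1}(G(\cdot)+t)$; since $\Psi^{-1}$ is Bernstein by Lemma~\ref{cmd1} in the critical/subcritical case, and more generally $g=1/\Psi$ restricted where positive is completely monotone, the argument of \cite[Lemma 5.5]{MP08} (composition of a Bernstein function with the solution flow it generates) shows $q\mapsto\Phi(t,q)$ has a completely monotone derivative. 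Being also nonnegative, it is Bernstein, hence admits the representation~\eqref{eqnPhiRep} with some $b_t\ge 0$, $\mu_t\in\Mplus$, by~\eqref{eqnBernstein}.

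Next I would record two monotonicity facts in $q$ that make the limits $q\to\infty$ behave well: $\Phi(t,\cdot)$ is nondecreasing and concave, so $b_t=\lim_{q\to\infty}\Phi(t,q)/q$ exists, and $\Phi(t,\infty):=\lim_{q\to\infty}\Phi(t,q)=b_t\cdot\infty+\langle\mu_t,1\rangle$, finite exactly when $b_t=0$ and $\mu_t\in\mathcal{M}_F$. The semigroup property $\Phi_{t+s}=\Phi_t\circ\Phi_s$ (already noted) gives $b_{t+s}=b_t b_s$ after dividing by $q$ and letting $q\to\infty$, which together with continuity in $t$ forces $b_t=e^{-ct}$ for some $c\in[0,\infty]$; in particular $b_t=0$ for one $t>0$ iff for all $t>0$. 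Similarly, once $b_t=0$, concavity and the flow give the ``some iff all'' statement for finiteness of $\langle\mu_t,1\rangle$: if $\Phi(t_0,\infty)<\infty$ then for $t>t_0$, $\Phi(t,\infty)=\Phi_{t-t_0}(\Phi(t_0,\infty))<\infty$, and for $t<t_0$, $\Phi(t,\infty)$ is even smaller since $\Phi_{t_0-t}$ is applied to get from $t$ to $t_0$ — actually the monotonicity here needs the (correct) direction: $\Phi(t,q)$ is nonincreasing in $t$ because $\partial_t\Phi=-\Psi(\Phi)\le 0$ wherever $\Phi>0$, and $\Psi\ge 0$ on the relevant range since $\Phi\le q$ stays below any zero of $\Psi$ that lies above the initial data — so $t\mapsto\Phi(t,\infty)$ is nonincreasing and the equivalence is immediate.

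For (i): $b_t=0$ for $t>0$ iff $b_t\to 0$, i.e. iff $\Phi(t,q)/q\to 0$. Differentiating the characteristic relation, $\partial_q\Phi(t,q)=\Psi(\Phi(t,q))/\Psi(q)$, and since $\Phi\le q$ with $\Psi$ convex, one gets the comparison $\partial_q\Phi(t,q)\le$ something controlled by $\Psi'(q)/\Psi'(\Phi)$; letting $q\to\infty$ the derivative tends to $0$ precisely when $\Psi'(\infty)=\infty$. Concretely: $b_t = \lim_{q\to\infty}\partial_q\Phi(t,q)$ by concavity, and from $G(\Phi)-G(q)=t$ one differentiates to get $\Phi'/\Psi(\Phi) = 1/\Psi(q)$, i.e. $\partial_q\Phi(t,q)=\Psi(\Phi(t,q))/\Psi(q)$; since $\Psi(q)/q\to\Psi'(\infty)$ (slope of a convex function) and $\Phi(t,q)\to\infty$ iff $b_t>0$ or $\langle\mu_t,1\rangle=\infty$, a short case analysis gives $b_t=1/\lim(\Psi'(q)/\Psi'(\Phi))$-type identity showing $b_t>0 \iff \Psi'(\infty)<\infty$. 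For (ii): given $b_t=0$ (so $\Psi'(\infty)=\infty$), finiteness of $\langle\mu_t,1\rangle=\Phi(t,\infty)$ is, via $G(\Phi(t,\infty))=G(\infty)+t$ interpreted as a limit, equivalent to $G(\infty)=-\int^\infty \frac{du}{\Psi(u)}>-\infty$ together with $\Psi(\infty)=\infty$ (needed so that $G$ actually reaches down from $+\infty$-values) — which is exactly Grey's condition~\eqref{conditionE}. I expect the main obstacle to be the bookkeeping around possible zeros of $\Psi$ on $\rplus$ and making the characteristic/implicit-function manipulations rigorous when $\Phi$ may approach a zero of $\Psi$ or $+\infty$; handling those degenerate ranges carefully (rather than the Bernstein property itself, which follows cleanly from Lemma~\ref{cmd1} and \cite{MP08}) is where the real care is needed.
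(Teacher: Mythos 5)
The central gap is in the proof that $\Phi(t,\cdot)$ is a Bernstein function. You invoke ``the argument of [MP08, Lemma 5.5]'' via the characteristic relation $G(\Phi(t,q))=G(q)+t$, but that lemma establishes complete monotonicity of $f'$ from a relation of the form $f'=g\circ f$ with $g$ completely monotone; here the relevant relation reads $\partial_q\Phi(t,q)=\Psi(\Phi(t,q))/\Psi(q)$, which is of the form $f'=(g\circ f)\cdot h$, and $g=\Psi$ is not completely monotone (its \emph{derivative} is Bernstein). Even granting that $1/\Psi$ is completely monotone where $\Psi>0$ (which is true in the critical/subcritical case but not proved in your sketch), the product $\Psi(\Phi(t,\cdot))/\Psi(\cdot)$ is not obviously completely monotone without already knowing something about $\Phi(t,\cdot)$, so the argument is circular as written. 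The paper supplies exactly the missing bridge: it runs the \emph{implicit Euler scheme} $\hat\Phi_{n+1}=F_N^{-1}\circ\hat\Phi_n$ with $F_N(u)=u+h\Psi(u)$, observes that $F_N$ is itself a (subcritical) branching mechanism for $N$ large, so $F_N^{-1}$ is Bernstein by Lemma~\ref{cmd1}, and then uses closure of Bernstein functions under composition and pointwise limits. That discretization is the idea your sketch is missing, and it also handles the supercritical case uniformly ($F_N$ has drift $1+h\alphabr>0$ regardless of the sign of $\alphabr$), whereas your route via Lemma~\ref{cmd1} applied directly to $\Psi$ only applies when $\Psi$ is critical or subcritical.

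Your arguments for (i) and (ii) are in roughly the right direction; the characteristic identity $\partial_q\Phi=\Psi(\Phi)/\Psi(q)$ and the interpretation $G(\Phi(t,\infty))=G(\infty)+t$ are legitimate tools, and the ``some iff all'' observation via the semigroup and monotonicity in $t$ is fine. But the ``short case analysis'' around $b_t=\lim_{q\to\infty}\partial_q\Phi(t,q)$ and $\Psi'(\infty)$ is left quite loose and needs the same care about zeros of $\Psi$ that you flag yourself; the paper instead uses the cleaner variational identity $\partial_q\Phi(t,q)=\exp\bigl(-\int_0^t\Psi'(\Phi(s,q))\,ds\bigr)$, which passes to $q\to\infty$ without any division by $\Psi$ and makes the contrapositive in (i) (``$b_t>0$ for some $t$ implies $\Phi(s,\infty)=\infty$ for all $s\le t$ by the semigroup, hence $\Psi'(\infty)<\infty$'') essentially immediate. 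You would do well to adopt that identity in place of the characteristic derivative when handling (i).
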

%

\begin{remark}
While the facts above can be infered from CSBP theory, we summarize
them here for convenience of the reader, and give a proof independent
of the latter theory. In particular, we recognize equation~\eqref
{eqnPhiRep} as the L\'evy--Khintchine formula for the Laplace exponent
of a CSBP with branching mechanism $\Psi$, as sketched in Section~\ref
{s:CSBP}. In this context, property (ii) states that a CSBP becomes
extinct by time $t$ with positive probability ($\Phi(t, \infty) <
\infty
$) if and only if Grey's condition holds. For critical CSBPs, this is
the case if and only if the process becomes extinct almost surely.
Thus property (ii) establishes a one-to-one correspondence between
fundamental solutions of \eqref{e:gsmain} and
L\'{e}vy measures for critical CSBPs that become extinct almost surely.
\end{remark}

\begin{pf}
Our proof is based on the implicit Euler method. First we will show that
each iteration of the implicit Euler scheme for~\eqref{IVPphi} yields a
Bernstein function. Then, since the set of Bernstein functions is closed
under composition and pointwise limits \cite{SSV}, pages~20--21,
convergence of the implicit Euler scheme implies that $\Phi(t, \cdot)$
is Bernstein.

By assumption, $\Psi$ has the representation~\eqref{eqnPsiRep}. Since
$\Psi^\prime$ is increasing and $\Psi^\prime(0^+) = \alphabr\in\R
$, it
follows that $\Psi$ is Lipshitz on bounded intervals.
Hence~\eqref{IVPphi} has a unique solution. Furthermore, the solution
remains positive for all time since the equation is autonomous and
$\Psi(0^+) = 0$. Also, since $\partial_t \Phi= -\Psi(\Phi) \leq
-\alphabr\Phi$,
we obtain, for all $t, q \geq0$, the bound
%
\begin{equation}
\label{phibound} 0 \leq\Phi(t,q) \leq qe^{-\alphabr t}.
\end{equation}

For fixed $t > 0$ and $N \in\N$, let $h=t/N$ and consider the
iteration scheme
%
\begin{equation}
\label{scheme1} \hat\Phi_{n+1}(q) = \hat\Phi_n(q) - h \Psi
\bigl( \hat\Phi_{n+1} (q)\bigr),\qquad n=0,1,\ldots,N-1.
\end{equation}
%
Note that for $N$ sufficiently large, the function $F_N\dvtx\rplus\to
\rplus$ defined by
%
\begin{equation}
\label{eulerF} F_N(u) = u + \frac{t}{N}\Psi(u) = u + h
\Psi(u)
\end{equation}
is a bijection, since
$F_N^{\prime}(u) = 1 + h\Psi^\prime(u) \geq1+\alphabr h>0$.
By consequence, $\hat\Phi_{n+1}(q) = F_N^{-1} (\hat\Phi_{n}(q))$
is well-defined and positive for all $q>0$ and $n=0,1,\ldots,N-1$.
Since $\Psi$ is locally smooth on $\rplus$ and we have the bound
\eqref{phibound}, the proof of the pointwise
convergence $\hat\Phi_N(q)\to\Phi(t,q)$ as $N\to\infty$
for each $q>0$ is standard, and we omit it.

Observe now that $F_N$ is a branching mechanism since it has a
representation of the form \eqref{eqnPsiRep}. Hence by Lemma~\ref{cmd1},
$F_N^{-1}$ is a Bernstein function, provided $N$ is sufficiently large.
Since the set of Bernstein functions is closed under composition,
and $\hat\Phi_0(q)=q$ is a Bernstein function, it follows
$\hat\Phi_n$ is a Bernstein function for each $n=0,\ldots,N$.
Finally, the pointwise convergence $\hat\Phi_N \to\Phi(t,\cdot)$
as $N
\to\infty$ implies that $\Phi(t,\cdot)$ is a Bernstein function, by
\cite{SSV}, Corollary~3.8.
Representation~\eqref{eqnBernstein} then gives
%
\begin{equation}
\label{cmd2} \Phi(t,q) = a_t + b_t q + \int
_{\rplus} \bigl(1-e^{-qx}\bigr) \mu_t(dx),
\end{equation}
for some $a_t, b_t \geq0$ and $\mu_t \in\Mplus$.
Note that (\ref{phibound}) implies $a_t = \Phi(t,0^+) = 0$ for all
$t\geq0$.

Next we establish (i). Observe that
$b_t = \partial_q \Phi(t,\infty)$,
and that the relation
%
\begin{equation}
\label{ode2} \partial_q \Phi(t,q) = e^{-\int_0^t \Psi^{\prime}(\Phi(s,q)) \,ds}
\end{equation}
is an easy consequence of (\ref{e:back1}).
If $\Psi^{\prime}(\infty) < \infty$, then since $\Psi'$ and $\Phi
(s,\cdot)$ are increasing,
for any $t>0$ we find
\[
b_t = e^{-\int_0^t \Psi^{\prime}(\Phi(s,\infty)) \,ds} \geq e^{-t\Psi
^{\prime}(\infty)} > 0.
\]
Conversely,
suppose $b_t > 0$ for some $t > 0$, then (\ref{cmd2}) implies $\Phi(t,
\infty) = \infty$ and hence $\Phi_t=\Phi(t,\cdot)$ is a surjection onto
$\rplus$.
Since $\Phi_t=\Phi_s\circ\Phi_{t-s}$ for $0<s<t$, $\Phi_s$ is also
a surjection
and hence $\Phi(s,\infty)=\infty$. Thus
$b_t = e^{-t\Psi^{\prime}(\infty)}>0$.
Hence, $\Psi^{\prime}(\infty) < \infty$.
This completes the proof of (i).

Finally, let us show that (ii) holds. First suppose $b_t = 0$ and $\mu
_t \in\mathcal{M}_F$ for some $t>0$. We claim Grey's condition holds.
From \eqref{eqnPhiRep}
we have
%
\begin{equation}
\label{totalmu} \Phi(t,\infty) = \intE\mu_t(dx) < \infty.
\end{equation}
Assume for the sake of contradiction that $\Psi(\infty) < \infty$. Then
$\Psi^\prime(\infty) \leq0$, and we have by~\eqref{eqnPsiRep},
$\betabr=
0$ and $\alphabr= \Psi^{\prime}(0^+) \leq-\int x \pi(dx)$.
In that case, $\Psi(u) < \int_{\rplus} (e^{-ux} - 1) \pi(dx) < 0$ for
all $u \in\rplus$, and $\Phi(\cdot, q)$ is increasing. Hence $\Phi
(t,q) \geq q \to\infty$ as $q \to\infty$, which contradicts~\eqref
{totalmu}. This shows $\Psi(\infty) = \infty$.

Now, assume~\eqref{conditionE} fails. As remarked above, failure of
this condition ensures that all solutions of (\ref{e:back1}) with
finite initial data remain finite backward in time. In particular, by
uniqueness and positivity of solutions of (\ref{IVPphi}), we have that
for all $q>0$,
\[
q=\Phi(0,q) = \Phi\bigl(-t, \Phi(t,q)\bigr) \leq\Phi\bigl(-t, \Phi(t,\infty)
\bigr),
\]
which is finite and independent of $q$.
Note that we used monotonicity of $\Phi$ in $q$ for the inequality.
This is a contradiction. Hence, Grey's condition holds.

Conversely, assume Grey's condition holds, and let
\[
q_* \defeq\inf\bigl\{q\in\rplus\dvtx\Psi(q) >0\bigr\}
\]
denote the largest equilibrium solution of~\eqref{e:back1}. Then, for
any $q>q_*$
there exists $t_q <0$ such that $\rplus\owns\Phi(t, q) \to\infty$ as
$t \to t_q^+$.
We define the special solution
\[
\Phi_*(t) = \Phi(t-t_q,q),
\]
which is independent of $q>q_*$ and has the property $\rplus\owns\Phi
_*(t) \to\infty$ as $t \to0^+$.
Since $\Phi(0,q) < \Phi_*(0^+) = \infty$, we deduce, by uniqueness of
solutions of~\eqref{e:back1}, that $\Phi(t,q) < \Phi_*(t)$ for all $t,
q > 0$. Therefore, taking $q\to\infty$, shows
$\Phi(t, \infty) < \infty$. That is, $b_t =0$ and $\mu_t \in
\mathcal{M}_F$,
for all $t>0$.
\end{pf}

\begin{remark}\label{massdelta}
Note that the Bernstein functions $\Phi(t, \cdot)$ converge pointwise
to the function $\Phi(0,q) = q$ as $t \to0$. It follows that
\[
\partial_q \Phi(t, q) = b_t + \int
_{\rplus} e^{-qx} x\mu_t(dx)
\]
converges pointwise to $\partial_q \Phi(0,q) = 1 = \int_{[0,\infty)}
e^{-qx} \delta_0(dx)$ as $t \to0$; see, for instance,~\cite{SSV}, page~21.
Therefore, by the continuity theorem (cf. \cite{Feller}, Theorem~XIII.1.2), the
measures $\kappa^{(t)}(dx) = b_t \delta_0(dx) + x\mu_t(dx)$
converge vaguely to the measure $\delta_0(dx)$ in the space of positive
Radon measures on $[0,\infty)$. In particular, if $\Psi$ is a critical
branching mechanism satisfying Grey's condition, then the mass measure,
$x\mu_t(dx)$, converges weakly to a delta mass at zero as $t \to0$.
Moreover, the total mass at time $t$, given by $\partial_q \Phi(t,
0^+)$, is conserved by~\eqref{ode2}.
\end{remark}

\begin{remark}\label{subord} Formula \eqref{fsoln} has a standard
probabilistic
interpretation: For fixed $t$, the L\'evy process with L\'evy measure
$\nu_t$ is subordinated to
the L\'evy process with L\'evy measure $\nu_0$ by the directing process
$Z(t,\cdot)$ with L\'evy measure $\mu_t$.
In terms of generators, this corresponds, however, to a deterministic
formula [\eqref{e:subordinate} below] that expresses the weak
solution $\nu_t$ of the nonlinear
Smoluchowski equation in terms of the fundamental solution $\mu_t$
and the kernel $Q_s$ of a convolution semigroup (a L\'evy diffusion)
given by
%
\begin{equation}
\label{e:gen1} e^{sA}f(x)=\int_{\R}
f(x+y)Q_s(dy),
\end{equation}
with generator $A$ determined from $\nu_0(dx)$ by
%
\begin{equation}
\label{e:gen2} Af(x) = \int_{\rplus} \bigl(f(x+y)-f(x)\bigr)
\nu_0(dy),
\end{equation}
for all smooth $f\in C_c(\R)$. Supposing that
$f(x)=e_q(x):=e^{-qx}$ for $x\ge0$,
we find that for $x\ge0$,
%
\begin{equation}
\label{e:gen3}\qquad Af(x) = -\varphi(0,q) e_q(x),\qquad e^{sA}f(x)
= e_q(x) \int_{[0,\infty)} e^{-qy}Q_s(dy).
\end{equation}
[Note $Q_s(dx)$ retains an atom at 0 with mass
$e^{-s\langle\nu_0,1 \rangle}$.] Hence
\[
\int_{E}\bigl(1-e^{-qx}\bigr)\int
_E Q_s(dx)\mu_t(ds) = \int
_E \bigl(1-e^{-s\varphi(0,q)}\bigr) \mu_t(ds) =
\Phi\bigl(t,\varphi(0,q)\bigr).
\]
Consequently, from \eqref{fsoln} we infer that
%
\begin{equation}
\label{e:subordinate} \nu_t(dx)= \int_{\rplus}
Q_s(dx) \mu_t(ds).
\end{equation}
Note that $Q_s$ is determined by solving
a linear equation, namely
$\partial_t u = Au$.
\end{remark}

\subsection{Weak solutions with initial data}\label{s:exist}

In this section we establish the existence and uniqueness of weak
solutions of~\eqref{e:gsmain} with initial data $\nu_0 \in\mathcal{M}_F$.

\begin{lemma}\label{nulemma}
Assume $\Psi\colon\rplus\to\R$ is a branching mechanism, and let
$\nu
_0 \in\Mplus$.
Then, there exists a unique vaguely continuous map $\nu\colon[0,
\infty
) \to\mathcal{M}_+$ such that $\varphi$, defined by~\eqref{defnu}, is
a solution of (\ref{e:back1}) with initial data
%
\begin{equation}
\label{dataphi0} \varphi_0(q) = \int_{\rplus}
\bigl(1-e^{-qx}\bigr) \nu_0(dx).
\end{equation}
Furthermore, if $\nu_0 \in\mathcal{M}_F$, then $\nu_t \in
\mathcal{M}_F$ for all $t >0$, and $\nu\colon[0, \infty) \to
\mathcal
{M}_F$ is weakly continuous. In this case, we have for~$t>0$
%
\begin{equation}
\label{totalnumber} \frac{d \langle\nu_t, 1 \rangle}{dt} = -\Psi\bigl( \langle\nu_t, 1
\rangle\bigr).
\end{equation}
\end{lemma}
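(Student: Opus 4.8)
The plan is to transfer the well-posedness for~\eqref{e:back1} established in Proposition~\ref{cmdphi} to the level of the measures $\nu_t$. First I would construct $\varphi$: given $\nu_0 \in \Mplus$, the function $\varphi_0$ in~\eqref{dataphi0} is a Bernstein function with zero drift and zero constant term, hence finite on $\rplus$; let $\varphi(t,q) = \Phi(t,\varphi_0(q))$ where $\Phi$ is the unique solution of~\eqref{IVPphi} furnished by Proposition~\ref{cmdphi}. By the semigroup property $\Phi_{t+s} = \Phi_t\circ\Phi_s$ and the chain rule, $\varphi$ solves~\eqref{e:back1} with $\varphi(0^+,\cdot)=\varphi_0$. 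Since $\Phi(t,\cdot)$ is Bernstein and $\varphi_0$ is Bernstein, the composition $\varphi(t,\cdot)$ is again Bernstein (composition-closure, \cite[pp.~20--21]{SSV}), and it vanishes at $q=0^+$ because $\varphi_0(0^+)=0$ and $\Phi(t,0^+)=0$; hence it has a L\'evy--Khintchine representation $\varphi(t,q) = b_t q + \intE(1-e^{-qx})\nu_t(dx)$ with $b_t\ge0$ and $\nu_t\in\Mplus$. I would note that in fact $b_t = b_t^\Phi \cdot b_0^{\varphi_0}$-type considerations are not needed; what matters is simply that such a decomposition exists, and I would isolate the measure part as $\nu_t$.

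Next I would address \textbf{uniqueness} and \textbf{vague continuity}. Uniqueness: if $\tilde\nu$ is another vaguely continuous solution with the same initial data, then $\tilde\varphi(t,q)\defeq \tilde b_t q + \intE(1-e^{-qx})\tilde\nu_t(dx)$ solves~\eqref{e:back1} with the same initial data $\varphi_0$; but for each fixed $q$ this is an ODE in $t$ with locally Lipschitz right-hand side (as $\Psi$ is Lipschitz on bounded intervals, established in the proof of Proposition~\ref{cmdphi}), so $\tilde\varphi(\cdot,q)=\varphi(\cdot,q)$ for all $q$; by uniqueness of the L\'evy--Khintchine representation, $\tilde b_t = b_t$ and $\tilde\nu_t=\nu_t$. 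For vague continuity of $t\mapsto\nu_t$: fix $t_0\ge0$; as $t\to t_0$, $\varphi(t,q)\to\varphi(t_0,q)$ pointwise in $q$ (continuity of the ODE flow), hence $\partial_q\varphi(t,q) = b_t + \intE e^{-qx}x\,\nu_t(dx)\to \partial_q\varphi(t_0,q)$ pointwise by the argument in Remark~\ref{massdelta} (monotone/bounded convergence applied to Bernstein derivatives, \cite[p.~21]{SSV}); the continuity theorem for Laplace transforms (\cite[XIII.1]{Feller}) then gives vague convergence of $b_t\delta_0 + x\,\nu_t(dx)$ to $b_{t_0}\delta_0 + x\,\nu_{t_0}(dx)$ on $[0,\infty)$, which on $(0,\infty)$ — where the factor $x$ is bounded away from $0$ on compacts — yields $\nu_t\xrightarrow{v}\nu_{t_0}$.

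For the finite-measure statement, suppose $\nu_0\in\mathcal{M}_F$, i.e.\ $\rho_0\defeq\varphi_0(\infty)=\langle\nu_0,1\rangle<\infty$. Since $\varphi_0\le\rho_0$ and $\Phi(t,\cdot)$ is increasing, $\varphi(t,q)\le\Phi(t,\rho_0)$, and taking $q\to\infty$ gives $\varphi(t,\infty)\le\Phi(t,\rho_0)<\infty$ (finiteness of $\Phi$ on $\rplus$ from Proposition~\ref{cmdphi}); as in Section~\ref{s:cmd}, $\varphi(t,\infty)<\infty$ forces $b_t=0$ and $\langle\nu_t,1\rangle=\varphi(t,\infty)<\infty$, so $\nu_t\in\mathcal{M}_F$. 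Weak continuity then upgrades vague continuity: $\langle\nu_t,1\rangle = \varphi(t,\infty)$ is continuous in $t$ (it is the ODE solution $\Phi(\cdot,\cdot)$ evaluated at the continuous argument $\varphi_0(\infty)$, using continuity of $\Phi$ up to the possibly-infinite initial value when $\rho_0<\infty$), so no mass escapes and $\nu_t\xrightarrow{w}\nu_{t_0}$. Finally, evaluating~\eqref{e:back1} at $q=\infty$ — legitimate because $\varphi(t,q)\uparrow\varphi(t,\infty)<\infty$ and $\Psi$ is continuous — gives $\frac{d}{dt}\langle\nu_t,1\rangle = -\Psi(\langle\nu_t,1\rangle)$, which is~\eqref{totalnumber}.

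The main obstacle I anticipate is the careful handling of the behavior at $q=\infty$ and at the atom at $0$: one must justify passing to the limit $q\to\infty$ in~\eqref{e:back1} and in the L\'evy--Khintchine representation, and must correctly extract vague convergence of $\nu_t$ on $(0,\infty)$ from vague convergence of the mass measures $x\nu_t(dx)$ on $[0,\infty)$ (the atom at $0$ contributed by $b_t$, and the factor $x$ vanishing at the origin, are exactly the points where a naive argument would slip). Everything else is a routine combination of ODE well-posedness with the Bernstein-function machinery and the Laplace-transform continuity theorem already invoked in the excerpt.
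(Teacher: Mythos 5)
Your construction, uniqueness argument, continuity argument, and the $\mathcal{M}_F$ case all track the paper's proof closely, but there is a genuine gap where you explicitly wave away the key point. You write that "$b_t = b_t^\Phi \cdot b_0^{\varphi_0}$-type considerations are not needed; what matters is simply that such a decomposition exists, and I would isolate the measure part as $\nu_t$." That will not do: the lemma asserts that $\varphi$ \emph{defined by}~\eqref{defnu} --- which has no drift term --- solves~\eqref{e:back1}. You correctly identify $\varphi(t,q) = \Phi(t,\varphi_0(q))$ as the solution, and you correctly note it is Bernstein with $\varphi(t,0^+)=0$, so it has a representation $\varphi(t,q) = b_t q + \intE (1-e^{-qx})\,\nu_t(dx)$. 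But if you merely isolate the measure part and call it $\nu_t$ without proving $b_t=0$, then the function $\intE(1-e^{-qx})\,\nu_t(dx)$ appearing in~\eqref{defnu} equals $\Phi(t,\varphi_0(q)) - b_t q$, which does \emph{not} solve~\eqref{e:back1} unless $b_t=0$; you would not have constructed a $\nu$ satisfying the lemma's conclusion at all.

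The missing step is exactly the $b_t=0$ argument, and it uses the hypothesis $\nu_0\in\Mplus$ in an essential way. Since $\nu_0\in\Mplus$, the Bernstein function $\varphi_0$ has zero drift, i.e.\ $\varphi_0'(q)\to 0$ as $q\to\infty$. By the chain rule $\partial_q\varphi(t,q)=\partial_q\Phi(t,\varphi_0(q))\,\varphi_0'(q)$; because $\partial_q\Phi(t,\cdot)$ is decreasing (completely monotone), the first factor is bounded above (by, say, $\partial_q\Phi(t,\varphi_0(1))$) as $q\to\infty$, while the second factor tends to $0$, giving $\partial_q\varphi(t,\infty)=b_t=0$. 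This is precisely the computation in the paper's proof, and inserting it closes your argument. Your remaining steps (ODE-uniqueness combined with uniqueness of the L\'evy--Khintchine representation; vague continuity from pointwise convergence of $\partial_q\varphi(t,\cdot)$ and the Laplace-transform continuity theorem, with care at the atom at the origin; passing to $q=\infty$ when $\nu_0\in\mathcal{M}_F$ to get~\eqref{totalnumber} and weak continuity) are sound and agree with the paper in substance.
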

%

\begin{pf}
Note that~\eqref{fsoln} represents the unique solution of (\ref
{e:back1}) with initial data $\varphi(0,q) = \varphi_0(q)$. By
Proposition~\ref{cmdphi}, the map $\Phi(t, \cdot)$ is a Bernstein
function for all $t \geq0$. Also, $\varphi_0$ is a Bernstein function
as it admits a representation of the form~\eqref{eqnBernstein}.
Therefore the composite function, given by (\ref{fsoln}), is a
Bernstein function for all $t \geq0$. Furthermore, we have $\varphi(t,
0^+) = 0$ and
\[
\partial_q\varphi(t, \infty) = \lim_{q \to\infty}
\partial_q\Phi\bigl(t, \varphi_0(q)\bigr)\cdot\lim
_{q \to\infty} \varphi_0^{\prime}(q).
\]
By assumption, the latter limit vanishes, and since $\partial_q\Phi(t,
\cdot)$ is decreasing, the former limit is finite. Hence, $\partial
_q\varphi(t, \infty) = 0$ and representation~\eqref{eqnBernstein} for
$\varphi$ reduces to
%
\begin{equation}
\label{varphi1} \varphi(t,q) = \int_{\rplus} \frac{1-e^{-qx}}{x}
\mu_t(dx),
\end{equation}
for some $\mu_t \in\mathcal{M}_+$ with $x^{-1} \mu_t \in\Mplus$.
Note that the measure $\mu_t$ is determined uniquely by its Laplace
transform $\partial_q \varphi(t,q)$.
Further, $\partial_q \varphi(t,q)$ is continuous in~$t$, since
$\partial
_t\, \partial_q \varphi(t,q) = \Psi^{\prime}(\varphi(t,q)) \Psi
(\varphi
(t,q))$. Therefore, viewing $\mu_t$ as a measure on $[0,\infty)$ (which
assigns measure zero to the point $\{0\}$), it follows from the
continuity theorem (cf. \cite{Feller}, Theorem~XIII.1.2) that the map $\mu
\colon
[0,\infty) \to\mathcal{M}_+([0,\infty))$ is vaguely continuous, where
$\mathcal{M}_+([0,\infty))$ is the space of Radon measures on
$[0,\infty)$.
In particular, for all $f \in C_c(\rplus) \subset C_c([0,\infty))$, we
have $\langle f, \mu_{s} \rangle\to\langle f, \mu_{t}\rangle$ as $s
\to t$. That is, the map $\mu\dvtx[0,\infty) \to\mathcal{M}_+$ is
vaguely continuous. Hence, the map
$ t \mapsto\nu_t \defeq x^{-1} \mu_t \in\mathcal{M}_+$ is also
vaguely continuous [since $\mu_{s} \mathop{\xrightarrow}\limits^{v} \mu_t$ implies
$\phi
\cdot\mu_{s} \mathop{\xrightarrow}\limits^{v} \phi\cdot\mu_t$ for any $\phi\in
C_c(\rplus)$]. This establishes the first part of the lemma.
Finally, observe
\[
\langle\nu_t, 1 \rangle = \lim_{q \to\infty} \varphi( t,
q ) = \lim_{q \to\infty} \Phi\bigl( t, \varphi_0(q)
\bigr) = \Phi\bigl(t, \langle\nu_0, 1 \rangle\bigr).
\]
Thus if $\nu_0 \in\mathcal{M}_F$, equation~\eqref{totalnumber} follows
from (\ref{IVPphi}) by taking $q = \langle\nu_0, 1 \rangle$.
Since~\eqref{totalnumber} implies $t \mapsto\langle
\nu_t, 1 \rangle$ is continuous on $[0,\infty)$, we conclude that
$\nu
\colon[0,\infty) \to\mathcal{M}_F$ is weakly continuous; see, for
instance, \cite{Bauer}, Theorem~30.8.
\end{pf}
%

\begin{corollary}\label{coreu}
Assume $\Psi\colon\rplus\to\rplus$ is a critical branching
mechanism, and let $\nu_0 \in\mathcal{M}_F$. Then, there exists a
unique weak solution of~\eqref{e:gsmain} with initial data $\nu_0$.
\end{corollary}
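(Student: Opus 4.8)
The plan is to deduce Corollary~\ref{coreu} by combining Theorem~\ref{existgs1}, which characterizes weak solutions of~\eqref{e:gsmain} as solutions of the backward equation~\eqref{e:back1}, with Lemma~\ref{nulemma}, which produces the solution of~\eqref{e:back1} arising from a given initial measure. First I would establish existence. Starting from $\nu_0 \in \mathcal{M}_F$, Lemma~\ref{nulemma} furnishes a weakly continuous map $\nu\colon[0,\infty)\to\mathcal{M}_F$ with $\nu_t\in\mathcal{M}_F$ for all $t>0$, such that the associated function $\varphi$ defined by~\eqref{defnu} solves~\eqref{e:back1} with initial data $\varphi_0$ given by~\eqref{dataphi0}. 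Since $\Psi$ is a critical branching mechanism and $\nu_t\in\mathcal{M}_F$, Theorem~\ref{existgs1} applies and shows that this $\nu$ is a weak solution of~\eqref{e:gsmain}; weak continuity at $t=0$ together with $\nu_0 \in \mathcal{M}_F$ shows it is a weak solution with initial data $\nu_0$ in the sense of Definition~\ref{weaksoln}.

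Next I would establish uniqueness. Suppose $\nu$ and $\tilde\nu$ are both weak solutions of~\eqref{e:gsmain} with initial data $\nu_0$. By Theorem~\ref{existgs1}, the associated functions $\varphi$ and $\tilde\varphi$, defined via~\eqref{defnu}, both solve~\eqref{e:back1}. I would check that both have the same initial data as $q\to0$: the weak convergence $\nu_t\xrightarrow{w}\nu_0$ (resp.\ $\tilde\nu_t\xrightarrow{w}\nu_0$) as $t\to0$, applied to the bounded continuous test functions $x\mapsto 1-e^{-qx}$, gives $\varphi(t,q)\to\varphi_0(q)$ and $\tilde\varphi(t,q)\to\varphi_0(q)$ as $t\to0$ for each fixed $q>0$, with $\varphi_0$ as in~\eqref{dataphi0}. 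Since~\eqref{e:back1} is an autonomous ODE in $t$ for each fixed $q$ with the locally Lipschitz right-hand side $-\Psi$ (recall $\Psi$ is Lipschitz on bounded intervals, as noted in the proof of Proposition~\ref{cmdphi}), and solutions stay in the bounded interval $[0,\varphi_0(q)]$ by monotonicity, uniqueness for ODEs forces $\varphi(t,q)=\tilde\varphi(t,q)=\Phi(t,\varphi_0(q))$ for all $t\ge0$ and all $q>0$. Finally, since for each $t$ the measure $\nu_t\in\mathcal{M}_F$ is determined by its Laplace transform $q\mapsto\varphi(t,q)$ (equivalently by $\langle\nu_t,1\rangle-\varphi(t,q)=\int e^{-qx}\nu_t(dx)$), the equality of the functions $\varphi$ and $\tilde\varphi$ yields $\nu_t=\tilde\nu_t$ for all $t\ge0$.

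The main obstacle, and the only point requiring care, is the handling of the initial data: one must confirm that the notion of weak solution with initial data $\nu_0$ (Definition~\ref{weaksoln}, formulated via weak convergence of measures) translates correctly into the correct pointwise-in-$q$ initial condition $\varphi(0^+,q)=\varphi_0(q)$ for the transformed equation~\eqref{e:back1}, and conversely that the Bernstein-function-valued solution produced in Lemma~\ref{nulemma} does attain $\nu_0$ weakly (not merely vaguely) as $t\to0$ when $\nu_0\in\mathcal{M}_F$ --- this last point uses~\eqref{totalnumber} and the continuity of $t\mapsto\langle\nu_t,1\rangle$ exactly as in the proof of Lemma~\ref{nulemma}. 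Everything else is a direct assembly of the two cited results together with elementary ODE uniqueness and the uniqueness of a finite measure given its Laplace transform.
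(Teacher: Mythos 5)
Your proof is correct and follows essentially the same route as the paper: existence by combining Lemma~\ref{nulemma} with Theorem~\ref{existgs1}, and uniqueness via the uniqueness built into Lemma~\ref{nulemma}. The only difference is cosmetic: the paper simply cites Lemma~\ref{nulemma} for uniqueness, whereas you unpack that citation into its ingredients (testing weak convergence against $x\mapsto 1-e^{-qx}$, ODE uniqueness for~\eqref{e:back1}, and determination of a finite measure by its Laplace transform), which is exactly what lies behind the lemma.
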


\begin{pf}
First, by Lemma~\ref{nulemma}, there exists a weakly continuous map
$\nu
\colon[0,\infty) \to\mathcal{M}_F$ such that $\varphi$, defined
by~\eqref{defnu}, satisfies~\eqref{e:back1} with initial data~\eqref
{dataphi0}. In particular, $\nu_t \in\mathcal{M}_F$ converges weakly
to $\nu_0$ as $t \to0$. By Theorem~\ref{existgs1}, $\nu$ restricted to
$\rplus$ verifies~\eqref{weakform}. Hence, by definition, $\nu$ is weak
solution of~\eqref{e:gsmain} with initial data $\nu_0$. Uniqueness of
the solution follows from Lemma~\ref{nulemma}.
\end{pf}

\section{Self-similarity and generalized Mittag--Leffler
functions}\label{s:gml}

Recall from \cite{MP04}, that the classical Smoluchowski equation,
which corresponds here to the special branching mechanism $\Psi(u) =
u^2$, admits a one-parameter family of self-similar solutions of the form
\[
\nu_t(dx) = t^{-1}F_{\rho}\bigl(t^{-{1}/{\rho}}
\,dx\bigr), \qquad t>0, 0 < \rho\leq1,
\]
where
$F_\rho$ is given by the classical Mittag--Leffler distribution function,
satisfying
%
\begin{equation}
\label{gml1} F_{\rho}(x) = \sum_{k=1}^{\infty}
\frac{(-1)^{k+1} x^{\rho k}}{\Gamma(\rho k + 1) },\qquad \int_{\rplus}\bigl(1-e^{-qx}\bigr)
F_{\rho}(dx) = \frac{1}{1 + q^{-\rho}}.
\end{equation}
%

We now discuss the existence of self-similar solutions for homogeneous
bran\-ch\-ing mechanisms of the form
%
\begin{equation}
\label{powerlawbm} \Psi(u) = \beta u^\gamma,\qquad 1 < \gamma\leq2, \beta>0.
\end{equation}
%
As in \cite{MP04}, we look for self-similar solutions of the form
%
\begin{equation}
\label{ansatz0} \nu_t(dx) = \alpha(t) F\bigl(\lambda(t)^{-1}\,dx
\bigr),
\end{equation}
where $F$ is a probability distribution and $\alpha, \lambda>0$ are
differentiable. In this case, the function $\varphi$, defined
by~\eqref
{defnu}, takes the form
%
\begin{equation}
\label{ansatz1} \varphi(t, q) = \alpha(t) \bar\varphi\bigl(q \lambda(t)\bigr),\qquad
\bar\varphi(s) = \int_{\rplus}\bigl(1-e^{-sx}
\bigr)F(dx).
\end{equation}
%
Furthermore, by Theorem~\ref{existgs1}, $\varphi$ satisfies the equation
%
\begin{equation}
\label{phigamma} \partial_t \varphi(t, q) = -\beta\varphi(t,q)^{\gamma}
\end{equation}
for all $0\leq q \leq\infty$, where
$\varphi(t,\infty) \defeq\int_{\rplus} \nu_t(dx)$.
By~\eqref{ansatz1}, $\varphi(t,\infty) = \alpha(t)$. Hence, up to the
normalization $\alpha(0^+) = \infty$, (\ref{phigamma}) gives
%
\begin{equation}
\label{ansatz2} \alpha(t) = \bigl[(\gamma-1)\beta t\bigr]^{{1}/{(1-\gamma)}}.
\end{equation}

Now, given (\ref{ansatz1}), we rewrite (\ref{phigamma}) as
\[
\frac{\alpha^{\prime}(t)}{\alpha(t)^{\gamma}} \bar\varphi \bigl(q\lambda(t)\bigr) + q \alpha(t)^{1-\gamma}
\lambda^{\prime}(t) \bar\varphi^{\prime
}\bigl(q\lambda(t)\bigr) = -
\beta\bar\varphi\bigl(q\lambda(t)\bigr)^{\gamma}.
\]
%
In terms of the variable $s \defeq q\lambda(t)$,
separation of variables yields
\[
\frac{(\gamma-1)t \lambda^{\prime}(t)}{\lambda(t)} = \frac{\bar\varphi(s) - \bar\varphi(s)^{\gamma}}{s
\bar\varphi^{\prime}(s)} = \frac{1}\rho,
\]
where we label the separation constant as $1/\rho$ for convenience.
The constant $\beta$ disappears thanks to~\eqref{ansatz2}.
%
Solving for the general solution in each case, we obtain
%
\begin{equation}
\label{gensoln} \lambda(t) = c_1 t^{{1}/{(\rho(\gamma- 1))}},
\qquad \bar\varphi(s) =
\biggl[\frac{1}{1 + c_2 s^{-\rho(\gamma-1)}} \biggr]^{
{1}/{(\gamma-1)}},
\end{equation}
where $c_1, c_2 >0$ are arbitrary constants. Taking into
account~\eqref{ansatz1}, we have $\bar\varphi(0^+) = 0$. Therefore,
$\rho> 0$. Furthermore, the fact that $\bar\varphi$ is a Bernstein
function implies that $0 < \rho\leq1$, otherwise $\bar\varphi
^{\prime\prime}$ takes positive values near $s = 0$. We obtain the
following proposition.


\begin{proposition}\label{ssform} Assume $\Psi$ is given
by~\eqref{powerlawbm}. Then \eqref{e:gsmain} admits a one-parameter
family of self-similar solutions, indexed by $\rho\in(0,1]$, of the form
%
\begin{equation}
\label{ssfamily} \mu^{\beta, \gamma, \rho}_t(dx) = \alpha(t) F_{\gamma, \rho}
\bigl(\alpha(t)^{{1}/{\rho}}\,dx\bigr),\qquad
 \alpha(t) = \bigl[(\gamma-1)\beta t
\bigr]^{{1}/{(1-\gamma)}},
\end{equation}
where
$F_{\gamma, \rho}$ is a probability measure determined by the relation
%
\begin{equation}
\label{dlaplace2} \int_{\rplus}\bigl(1-e^{-qx}\bigr)
F_{\gamma, \rho}(dx) = \biggl[\frac{1}{1 +
q^{-\rho(\gamma- 1)}} \biggr]^{{1}/{(\gamma-1)}}.
\end{equation}
More precisely, the function
%
\begin{equation}
\label{ssfamily2} \varphi^{\beta, \gamma, \rho}(t,q) = \int_{\rplus}
\bigl(1-e^{-qx}\bigr)\mu ^{\beta, \gamma, \rho}_t(dx)
\end{equation}
solves~\eqref{phigamma} with initial data $\varphi^{\beta, \gamma,
\rho}(0,q) = q^{\rho}$. In particular, $\mu^{\beta, \gamma, 1}_t$
is the
fundamental solution of~\eqref{e:gsmain}. That is, $\mu^{\beta,
\gamma,
1}_t$ is the L\'{e}vy measure for $Z_{\beta, \gamma}(t,x)$, the
continuous-state branching process with branching mechanism \eqref{powerlawbm}.
\end{proposition}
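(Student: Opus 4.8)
The plan is to reduce everything to Proposition~\ref{cmdphi} together with the explicit solution of the scalar ODE in \eqref{IVPphi}, thereby sidestepping any direct verification that the right-hand side of \eqref{dlaplace2} is a Laplace exponent. First I would record that $\Psi(u)=\beta u^\gamma$ is a critical branching mechanism: for $\gamma=2$ this is clear with $\betabr=\beta$, and for $\gamma\in(1,2)$ one uses the standard representation $u^\gamma=c_\gamma\intE(e^{-ux}-1+ux)\,x^{-1-\gamma}\,dx$ with $c_\gamma=\bigl(\int_0^\infty(e^{-y}-1+y)y^{-1-\gamma}\,dy\bigr)\inv>0$, which is of the form \eqref{eqnPsiRep} with $\alphabr=0$ and $\intE(x\wedge x^2)\pi(dx)<\infty$. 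Moreover $\Psi$ satisfies Grey's condition since $\Psi(\infty)=\infty$ and $\int_a^\infty u^{-\gamma}\,du<\infty$ for $\gamma>1$. Solving \eqref{IVPphi} by separation of variables gives $\Phi(t,q)=[q^{1-\gamma}+(\gamma-1)\beta t]^{\frac{1}{1-\gamma}}$, so that $\Phi(t,\infty)=\alpha(t)=[(\gamma-1)\beta t]^{\frac{1}{1-\gamma}}$; by Proposition~\ref{cmdphi}, for each $t>0$ the function $\Phi(t,\cdot)$ is a Bernstein function, and by part~(ii) (Grey's condition holds) it has no drift and finite total mass $\alpha(t)$.

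Next, for $\rho\in(0,1]$ I would \emph{define} $\varphi^{\beta,\gamma,\rho}(t,q):=\Phi(t,q^\rho)=[q^{\rho(1-\gamma)}+(\gamma-1)\beta t]^{\frac{1}{1-\gamma}}$. Since $q\mapsto q^\rho$ is a Bernstein function for $\rho\leq1$ and Bernstein functions are closed under composition, $\varphi^{\beta,\gamma,\rho}(t,\cdot)$ is Bernstein; it vanishes at $0^+$ and is bounded above by $\alpha(t)$, hence has neither a constant nor a linear term, so $\varphi^{\beta,\gamma,\rho}(t,q)=\intE(1-e^{-qx})\,\mu^{\beta,\gamma,\rho}_t(dx)$ for a finite measure $\mu^{\beta,\gamma,\rho}_t$ of total mass $\alpha(t)$. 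Writing $s=q\,\alpha(t)^{-1/\rho}$, one checks algebraically that $\alpha(t)\inv\varphi^{\beta,\gamma,\rho}(t,q)=[1+s^{\rho(1-\gamma)}]^{\frac{1}{1-\gamma}}=\bigl[1/(1+s^{-\rho(\gamma-1)})\bigr]^{\frac{1}{\gamma-1}}$, a function of $s$ alone; calling it $\bar\varphi(s)$, it is a bounded Bernstein function with $\bar\varphi(0^+)=0$ and $\bar\varphi(\infty)=1$, hence $\bar\varphi(s)=\intE(1-e^{-sx})\,F_{\gamma,\rho}(dx)$ for a probability measure $F_{\gamma,\rho}$ on $\rplus$, which is unique by injectivity of the Laplace transform and satisfies \eqref{dlaplace2}. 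Undoing the rescaling gives $\mu^{\beta,\gamma,\rho}_t(dx)=\alpha(t)F_{\gamma,\rho}(\alpha(t)^{\frac1\rho}\,dx)$, which is \eqref{ssfamily}, and for $\gamma=2$ this recovers the classical Mittag--Leffler law of \eqref{gml1}.

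It then remains to verify that $\varphi^{\beta,\gamma,\rho}$ solves \eqref{phigamma} with the stated initial data, and to identify the case $\rho=1$. Since $q^\rho$ enters only as a parameter in the $t$-evolution, $\partial_t\varphi^{\beta,\gamma,\rho}(t,q)=\partial_t\Phi(t,q^\rho)=-\Psi(\Phi(t,q^\rho))=-\beta\,\varphi^{\beta,\gamma,\rho}(t,q)^\gamma$ for all $0\leq q\leq\infty$ (the case $q=\infty$ being the fact that $\alpha(t)$ itself solves $\alpha'=-\beta\alpha^\gamma$); equivalently this is just the separation-of-variables computation leading to \eqref{gensoln}, run with $c_2=1$. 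Letting $t\to0^+$ in the explicit formula gives $\varphi^{\beta,\gamma,\rho}(0,q)=[q^{\rho(1-\gamma)}]^{\frac{1}{1-\gamma}}=q^\rho$. For $\rho=1$ the initial data is $q$, so by uniqueness of solutions of \eqref{IVPphi} we have $\varphi^{\beta,\gamma,1}=\Phi$; hence $\mu^{\beta,\gamma,1}_t$ is the fundamental solution in the sense of Definition~\ref{funsoln}, and by the correspondence noted after Proposition~\ref{cmdphi} it is the L\'evy measure of the critical CSBP $Z_{\beta,\gamma}(t,x)$ with branching mechanism \eqref{powerlawbm}.

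The main obstacle is precisely the Bernstein property of $\bar\varphi$ --- equivalently, that the right-hand side of \eqref{dlaplace2} is the Laplace exponent of an honest probability measure. A frontal attack is awkward because the exponent $\frac{1}{\gamma-1}$ is $>1$ for $\gamma<2$, and the Bernstein class is not preserved under powers greater than one; the device above avoids this by realizing $\bar\varphi$ as a rescaling of $\Phi(t,q^\rho)$ and invoking Proposition~\ref{cmdphi}, which already contains the nontrivial input (established there via the implicit Euler scheme and Lemma~\ref{cmd1}) that solutions of \eqref{IVPphi} are Bernstein in the spatial variable. Everything else is a routine ODE computation and bookkeeping of the scaling factors.
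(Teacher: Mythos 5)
Your proposal is correct and takes essentially the same route as the paper: both realize $\varphi^{\beta,\gamma,\rho}(t,q)$ as the composition $\Phi(t,q^\rho)$ and invoke Proposition~\ref{cmdphi} (together with closure of Bernstein functions under composition, i.e.\ formula~\eqref{fsoln}) to conclude that the right-hand side of~\eqref{dlaplace2} is an honest Laplace exponent, precisely to avoid a frontal attack on the power $\frac{1}{\gamma-1}>1$. You spell out a few steps the paper leaves implicit (that $\beta u^\gamma$ is a critical branching mechanism satisfying Grey's condition, and the algebra rescaling $\varphi^{\beta,\gamma,\rho}/\alpha(t)$ into a function of $s=q\alpha(t)^{-1/\rho}$ alone), but the key idea and its execution are the same.
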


\begin{pf}
We set $c_1 = [(\gamma-1)\beta]^{{1}/{(\rho(\gamma-1))}}$
and $c_2 = 1$ in~\eqref{gensoln},
so that \eqref{ansatz1} takes the form
%
\begin{equation}
\label{mlfun} \varphi^{\beta, \gamma, \rho}(t,q) 
= \biggl[\frac{1}{(\gamma-1)\beta t + q^{-\rho(\gamma- 1)}}
\biggr]^{{1}/{(\gamma-1)}}.
\end{equation}
By construction, this function solves~\eqref{phigamma} and has initial data
$\varphi^{\beta, \gamma, \rho}(0, q) = q^{\rho}$, which is a
Bernstein function.
Hence, by Proposition~\ref{cmdphi}, formula~\eqref{fsoln}
and Theorem~\ref{existgs1},
$\varphi^{\beta,\gamma,\rho}$ has the representation in \eqref
{ssfamily2} where
$\mu^{\beta, \gamma, \rho}_t$ solves \eqref{e:gsmain}.
The remaining statements regarding the case $\rho= 1$ follow easily
from definitions; see Sections~\ref{s:CSBP} and~\ref{s:cmd}.
\end{pf}

Finally, we show that the distribution function for $F_{\gamma, \rho}$
has a generalized Mittag--Leffler structure analogous to (\ref{gml1}).

\begin{lemma}\label{gmlemma}
Suppose $F$ is a probability measure
on $\rplus$ such that for some fixed
$r$, $s>0$,
%
\begin{equation}
\label{dslaplace} \int_{\rplus} \bigl(1-e^{-qx}\bigr)
F(dx) = \biggl[\frac{1}{1 + q^{-s}} \biggr]^r,
\end{equation}
for all $q>0$. Then the distribution function of $F$ takes the form
%
\begin{equation}
\label{gml} F(x) = \sum_{k=1}^{\infty}
\frac{(r)_k}{k!} \cdot\frac{(-1)^{k+1}
x^{sk}}{\Gamma(sk + 1)},
\end{equation}
where $(r)_k$ denotes the Pochhammer symbol, or ``rising factorial'' function
\[
(r)_k = r(r+1) (r+2)\cdots(r + k - 1).
\]
\end{lemma}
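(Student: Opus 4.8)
The plan is to reconstruct the distribution function of $F$ by inverting its Laplace transform. Write $\widehat F(q) = \intE e^{-qx}\,F(dx)$; since $F$ is a probability measure, hypothesis \eqref{dslaplace} reads $\widehat F(q) = 1 - (1+q^{-s})^{-r}$. Because $F$ is carried by $\rplus$, its distribution function has $F(0^+)=0$ and $F(\infty)=1$, so an integration by parts gives $\widehat F(q) = q\int_0^\infty e^{-qx}F(x)\,dx$, whence
\[
\int_0^\infty e^{-qx}F(x)\,dx \;=\; \frac{1-(1+q^{-s})^{-r}}{q}, \qquad q>0 .
\]
For $q>1$ I would expand the right-hand side: the binomial series gives $(1+q^{-s})^{-r} = \sum_{k\ge0}\binom{-r}{k}q^{-sk}$ with $\binom{-r}{k} = (-1)^k (r)_k/k!$, and stripping the $k=0$ term (which equals $1$) yields $\int_0^\infty e^{-qx}F(x)\,dx = \sum_{k\ge1}\frac{(-1)^{k+1}(r)_k}{k!}\,q^{-sk-1}$ for all $q>1$.

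On the other side, set $G(x) = \sum_{k\ge1}\frac{(-1)^{k+1}(r)_k}{k!}\frac{x^{sk}}{\Gamma(sk+1)}$. First I would note that this power series converges locally uniformly on $[0,\infty)$: by Stirling $\Gamma(sk+1)$ grows super-exponentially in $k$ while $(r)_k/k! = \Gamma(r+k)/(\Gamma(r)\Gamma(k+1))$ grows only polynomially (like $k^{r-1}/\Gamma(r)$), so the radius of convergence is infinite and $G$ is continuous on $[0,\infty)$. Then, using the elementary Gamma integral $\int_0^\infty e^{-qx}x^{sk}/\Gamma(sk+1)\,dx = q^{-sk-1}$ together with Tonelli's theorem — applicable because $\sum_{k\ge1}\frac{(r)_k}{k!}q^{-sk} = (1-q^{-s})^{-r}-1 < \infty$ for $q>1$ — I can integrate the series for $G$ term by term to obtain $\int_0^\infty e^{-qx}G(x)\,dx = \sum_{k\ge1}\frac{(-1)^{k+1}(r)_k}{k!}q^{-sk-1}$ for every $q>1$.

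Comparing the two formulas, $\int_0^\infty e^{-qx}F(x)\,dx = \int_0^\infty e^{-qx}G(x)\,dx$ for all $q>1$, so by uniqueness of the Laplace transform $F(x)=G(x)$ for Lebesgue-almost every $x\ge0$; since $G$ is continuous and $F$ is monotone, equality a.e.\ forces equality everywhere, which is exactly \eqref{gml}. The only mildly delicate steps are the term-by-term Laplace inversion (the interchange of sum and integral), handled by the Tonelli bound above, and the upgrade from ``almost everywhere'' to ``everywhere'', handled by monotonicity of $F$ and continuity of $G$; I expect the former to be the main obstacle, although it is routine given the super-exponential decay of $1/\Gamma(sk+1)$.
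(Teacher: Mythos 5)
Your proof is correct and follows essentially the same route as the paper's: binomial expansion of $(1+q^{-s})^{-r}$, term-by-term Laplace inversion via the Gamma integral, and a uniqueness argument. The only cosmetic differences are that you work with the distribution function $F(x)$ via integration by parts and appeal to Laplace-transform uniqueness plus monotonicity/continuity, whereas the paper works with the density of $F(dx)$ and invokes the identity theorem to extend the relation from $\mathrm{Re}(q)>1$ to all $q>0$; you also make the Tonelli justification explicit, which the paper leaves implicit in its ``formally'' derivation.
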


\begin{remark}
A study of generalized Mittag--Leffler distribution functions of the
form (\ref{gml}) is given by Prabhakar~\cite{Prabhakar}. In particular,
we may define, as in~\cite{Prabhakar}, the family of generalized
Mittag--Leffler functions
%
\begin{equation}
\label{mittag} E^{\rho}_{\alpha, \beta}(x) = \sum
_{k=0}^{\infty} \frac{(\rho)_k
 x^k}{ k! \Gamma(\alpha k + \beta)}, \qquad \alpha, \beta,
\rho>0,
\end{equation}
in which case~\eqref{gml} has the particular form
\[
F(x) = 1 - E^r_{s, 1}\bigl(-x^s\bigr).
\]
\end{remark}

\begin{pf}
By series expansion of $(1-x)^{-r}$ at $x=0$, one easily computes that
\[
\biggl[\frac{1}{1 + q^{-s}} \biggr]^r = \sum
_{k=0}^{\infty} \frac{r(r+1)
\cdots(r+k-1)}{k!}\bigl(-q^{-s}
\bigr)^k
\]
for $|q| > 1$. Next, note that
\[
q^{-sk} = \frac{sk}{\Gamma(sk+1)} \int_0^\infty
e^{-qx} x^{sk-1} \,dx,
\]
for $k \geq1$. Since
\[
1-\int_0^{\infty} e^{-qx} F(dx) = \sum
_{k=0}^{\infty} \frac{(r)_k}{k!}
(-1)^k q^{-sk} = 1 - \sum_{k=1}^{\infty}
\frac{(r)_k}{k!} (-1)^{k+1} q^{-sk},
\]
we conclude, formally, that
%
\begin{equation}
\label{measure1} F(dx) = \sum_{k=1}^{\infty}
\frac{(r)_k}{k!} \cdot\frac{(-1)^{k+1}
sk}{\Gamma(sk+1)} x^{sk-1} \,dx.
\end{equation}
Indeed, the previous series converges for all $x>0$, has a
(probability) distribution function given by $(\ref{gml})$, and
satisfies (\ref{dslaplace}) for all $\operatorname{Re}(q) > 1$. It
follows by the identity theorem, that (\ref{dslaplace}) holds for all
$q>0$, since both the left and right-hand sides of (\ref{dslaplace})
are analytic for $\operatorname{Re}(q) >0$.
\end{pf}

\section{Scaling limits with regularly varying \texorpdfstring{$\Psi$}{Psi}}\label{s:main1}

Proposition~\ref{ssform} establishes the existence of a family of
self-similar solutions of~\eqref{e:gsmain} with power-law branching
mechanisms $\Psi(u) = \beta u^\gamma$, $1 < \gamma\leq2$. These
solutions have a scaling invariance given by
\[
\mu^{\beta, \gamma, \rho}_t(dx) = s^{{1}/{(\gamma-1)}} \mu ^{\beta,
\gamma, \rho}_{st}
\bigl(s^{{1}/{(\rho(\gamma-1))}}\,dx\bigr),
\]
for any $s > 0$. While self-similarity arises in this case due to
homogeneity of $\Psi$, we will show that branching mechanisms with an
asymptotic power-law structure admit solutions which are asymptotically
self-similar.

Recall that a function $f > 0$ is said to be \textit{regularly
varying} at
zero (resp., infinity) with index $\rho\in\mathbb{R}$ if
\[
\frac{f(tx)}{f(t)} \to x^{\rho}
\]
as $t \to0$ (resp., $t \to\infty$) for all $x >0$. If $\rho= 0$,
then $f$ is said to be \textit{slowly varying}.

\begin{theorem}\label{main1}
Let $\nu\colon\rplus\to\mathcal{M}_F$ be a weak solution of
equation~\eqref{e:gsmain}
where $\Psi$ is a critical branching mechanism which is regularly
varying at zero with index $\gamma\in(1,2]$.
\begin{longlist}[(ii)]
\item[(i)]
Suppose there exists a nonzero $\hat\nu\in\mathcal{M}_F$ and
functions $\alpha, \lambda> 0$ such that 
%
\begin{equation}
\label{limhatnu} \alpha(t)\nu_t\bigl(\lambda(t)^{-1} \,dx\bigr)
\mathop{\xrightarrow}^{w} \hat\nu(dx)\qquad \mbox{as $t \to\infty$. }
\end{equation}
Then, there exists $\rho\in(0, 1]$ such that for all $t > 0$,
%
\begin{equation}
\label{tails} \int_0^x y
\nu_t(dy) \sim x^{1-\rho} L(t, x)\qquad \mbox{as } x \to \infty,
\end{equation}
where $L(t, \cdot)$ is slowly varying at infinity. Furthermore, there
exists $c_\lambda>0$, given by~\eqref{scalelambda}, such that
%
\begin{equation}
\label{hatnurep} \hat\nu(dx) = \langle\hat\nu, 1 \rangle F_{\gamma, \rho
}\bigl(
\langle \hat\nu, 1 \rangle^{{1}/{\rho}} c_\lambda^{-1} \,dx
\bigr).
\end{equation}
Here $F_{\gamma,\rho}$ is the generalized Mittag--Leffler distribution
defined by~\eqref{dlaplace2}. Moreover, for all $t > 0$,
%
\begin{equation}
\label{e:selfsim} \alpha(s) \nu_{st}\bigl(\lambda(s)^{-1}\,dx
\bigr) \mathop{\xrightarrow}^{w} t^{
{1}/{(1-\gamma)}} \hat\nu\bigl(t^{{1}/{(\rho(1-\gamma))}}\,dx\bigr)
\qquad\mbox{as $s \to\infty$, }
\end{equation}
and the limit in~\eqref{e:selfsim} is a self-similar solution
of~\eqref
{e:gsmain} with branching mechanism 
%
\begin{equation}
\label{branchlim} \hat\Psi(u) = \beta u^\gamma,\qquad \beta= \frac{\langle\hat\nu, 1 \rangle^{1-\gamma}}{\gamma-1}.
\end{equation}

\item[(ii)] Conversely, suppose there exist $t_0 >0$, $\rho\in(0,
1]$, and $L$ slowly varying at infinity such that \eqref{tails} holds
for $t=t_0$.
Then, there exists a function $\lambda(t) \to0$, implicitly defined
by~\eqref{deflambda}, such that~\eqref{limhatnu} holds with $\alpha(t)
= \langle\nu_t, 1 \rangle^{-1}$ and $\hat\nu= F_{\gamma,\rho}$.
\end{longlist}
\end{theorem}

\begin{remark}
Note that if~\eqref{limhatmu} holds for a weak solution $\nu\colon
[0,\infty) \to\mathcal{M}_F$ with initial data $\nu_0$, then~\eqref
{tails} holds for $t = 0$. Similarly, if~\eqref{tails} holds for $t =
t_0= 0$, then the converse result holds; cf. \cite{MP04}. Indeed, the
proof below extends easily to these cases.
\end{remark}


We begin our analysis leading to the proof of Theorem~\ref{main1}
with the following pair of useful lemmas.

\begin{lemma}[(Uniform convergence lemma)]\label{uclemma}
Assume $f > 0$ is monotone and regularly varying at $x = \infty$ with
index $\rho\ne0$. Assume $h>0$. Then, for any $0 \leq\lambda\leq
\infty$,
%
\begin{equation}
\label{ucl} \frac{f(xh(x))}{f(x)} \to\lambda^{\rho}
\end{equation}
as $x \to\infty$ if and only if $h(x) \to\lambda$ as $x \to\infty$.
\end{lemma}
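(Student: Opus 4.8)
The plan is to prove the two directions separately: first the forward implication, namely that $h(x)\to\lambda$ forces $f(xh(x))/f(x)\to\lambda^\rho$, by a direct sandwich using monotonicity; then to deduce the reverse implication from it by a compactness-and-injectivity argument. Throughout, the only input on $f$ is the pointwise relation $f(tx)/f(t)\to x^\rho$ for each fixed $x>0$ (i.e., the definition of regular variation) together with monotonicity. I would first record that, because $f$ is monotone and $\rho\ne0$, the sign of $\rho$ is determined: if $f$ is nondecreasing, taking any $x>1$ gives $f(tx)/f(t)\ge1$, hence $x^\rho\ge1$, hence $\rho>0$; likewise a nonincreasing $f$ forces $\rho<0$. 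In either case $t\mapsto t^\rho$ is a strictly monotone bijection of the compactified half-line $[0,\infty]$ onto itself, with $0^\rho$ and $\infty^\rho$ read off from the sign of $\rho$.

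For the forward implication, assume $h(x)\to\lambda$. If $\lambda\in(0,\infty)$, fix a small $\epsilon>0$ with $\lambda-\epsilon>0$; for all large $x$ we have $(\lambda-\epsilon)x\le xh(x)\le(\lambda+\epsilon)x$, so monotonicity sandwiches $f(xh(x))$ between $f((\lambda-\epsilon)x)$ and $f((\lambda+\epsilon)x)$. Dividing by $f(x)$ and sending $x\to\infty$ gives the two bounding limits $(\lambda-\epsilon)^\rho$ and $(\lambda+\epsilon)^\rho$; letting $\epsilon\downarrow0$ collapses these to $\lambda^\rho$. If $\lambda=0$, then for any fixed $\delta\in(0,1)$ we have $0<xh(x)\le\delta x$ eventually, so monotonicity gives either $0<f(xh(x))/f(x)\le f(\delta x)/f(x)$ (when $f$ is nondecreasing, so $\rho>0$) or $f(xh(x))/f(x)\ge f(\delta x)/f(x)$ (when $f$ is nonincreasing, so $\rho<0$); since $f(\delta x)/f(x)\to\delta^\rho$ and $\delta^\rho\to0^\rho$ as $\delta\downarrow0$, the ratio converges to $0^\rho=\lambda^\rho$. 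The case $\lambda=\infty$ is symmetric, comparing $xh(x)$ with $Mx$ for fixed large $M$ and letting $M\uparrow\infty$.

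For the reverse implication, suppose $f(xh(x))/f(x)\to\lambda^\rho$ but $h(x)\not\to\lambda$ in $[0,\infty]$. Then there is an open neighbourhood $U$ of $\lambda$ and a sequence $x_n\to\infty$ with $h(x_n)\in[0,\infty]\setminus U$ for all $n$; by compactness of $[0,\infty]\setminus U$ we may pass to a subsequence along which $h(x_n)\to\mu$ for some $\mu\notin U$, in particular $\mu\ne\lambda$. The sandwich argument of the previous paragraph applies verbatim along a sequence of $x$-values tending to infinity, so $f(x_nh(x_n))/f(x_n)\to\mu^\rho$; but the same quantity also tends to $\lambda^\rho$, whence $\mu^\rho=\lambda^\rho$. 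Since $\rho\ne0$, the map $t\mapsto t^\rho$ is injective on $[0,\infty]$, so $\mu=\lambda$, a contradiction. Hence $h(x)\to\lambda$.

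The only place requiring real care is the pair of endpoint cases $\lambda\in\{0,\infty\}$, where the sandwich is one-sided and one must push a free parameter ($\delta\downarrow0$ or $M\uparrow\infty$) through the limit; matching the resulting value with $0^\rho$ or $\infty^\rho$ is exactly what the sign of $\rho$ — forced by monotonicity — supplies, and this is also where the hypothesis $\rho\ne0$ is essential, since it both pins that sign and makes the power map injective. One could alternatively invoke the uniform convergence theorem for regularly varying functions, but for monotone $f$ the elementary sandwich above is self-contained and shorter.
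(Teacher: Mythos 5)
Your proof is correct, and it departs from the paper's in a genuine (if modest) way: for the forward implication with $\lambda\in(0,\infty)$, the paper invokes Karamata's Uniform Convergence Theorem (\cite[Theorem 1.5.2]{Bingham}) and only falls back on monotonicity for the endpoint cases $\lambda\in\{0,\infty\}$, whereas you treat all three cases uniformly by the same elementary sandwich, using only the pointwise definition of regular variation plus monotonicity of $f$. What your route buys is self-containedness: no appeal to the UCT, and the argument visibly makes the monotonicity hypothesis (which the paper assumes anyway) do all the work. What the paper's route buys is brevity at the interior case, at the cost of citing a nontrivial theorem. Your observation that monotonicity plus $\rho\ne0$ pins down the sign of $\rho$, and that this is exactly what makes the endpoint comparisons close up and the power map injective on $[0,\infty]$, is a nice clarifying remark that the paper leaves implicit. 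The reverse direction is essentially identical in both proofs: pass to a convergent subsequence in the compactified half-line and use the forward direction (along a sequence, which the sandwich handles verbatim) together with injectivity of $t\mapsto t^\rho$ to force $\mu=\lambda$.
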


\begin{pf}
The result essentially follows from the uniform convergence theorem of
Karamata (see, e.g., \cite{Bingham}, Theorem~1.5.2). In
particular, if $f$ satisfies the hypotheses above, then the convergence
$f(\lambda x)/f(x) \to\lambda^{\rho}$ as $x \to\infty$ is uniform in
$\lambda$ on compact subsets of $\rplus$. Therefore, if $h(x) \to
\lambda$ as $x \to\infty$ for $0 < \lambda< \infty$, then (\ref{ucl})
holds. The cases $\lambda= 0$ and $\lambda= \infty$ then follow from
the monotonicity of $f$.

Conversely, suppose (\ref{ucl}) holds for some $0 \leq\lambda\leq
\infty$. Then, if $h(x) \nrightarrow\lambda$, there exists a
subsequence $x_n \to\infty$ such that $h(x_n) \to\nu$ for some
$0\leq
\nu\leq\infty$ with $\nu\ne\lambda$. We deduce that $f(x_n
h(x_n))/f(x_n) \to\nu^{\rho}$, which contradicts (\ref{ucl}). This
completes the proof.
\end{pf}

\begin{lemma}\label{rvlem1}
Assume $\Psi\colon\rplus\to\rplus$ is continuous and regularly
varying at $u = 0$ with index $\gamma> 1$. Further, assume $u\dvtx
\rplus
\to\rplus$ solves the ordinary differential equation
%
\begin{equation}
\label{odepsi} u^{\prime} = -\Psi(u).
\end{equation}
Then $u$ is regularly varying at infinity with index $(1-\gamma)^{-1}$.
\end{lemma}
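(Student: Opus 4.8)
The plan is to solve \eqref{odepsi} implicitly and translate the regular variation of $\Psi$ at $0$ into regular variation of $u$ at infinity via a change-of-variables argument together with Karamata's theory. Since $u' = -\Psi(u) < 0$, the solution $u$ is strictly decreasing; moreover $u(t) \to 0$ as $t \to \infty$ (it cannot converge to a positive limit $\ell$, since that would force $\Psi(\ell) = 0$, contradicting $\Psi > 0$; and it stays positive because $\Psi(0^+) = 0$). Separating variables gives, for $t > s$,
\begin{align}\label{e:sep}
\int_{u(t)}^{u(s)} \frac{dv}{\Psi(v)} = t - s.
\end{align}
Fixing a reference point $a > 0$ and writing $G(w) = \int_w^a \frac{dv}{\Psi(v)}$ for $0 < w \le a$, equation \eqref{e:sep} says $G(u(t)) = t + C$ for a constant $C$ (once $u(t) \le a$), i.e. $u$ is essentially the inverse function of $G$. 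Note $G$ is well-defined and finite on $(0,a]$: since $\Psi$ is regularly varying at $0$ with index $\gamma > 1$, we have $1/\Psi(v) = v^{-\gamma}\ell(v)$ with $\ell$ slowly varying, and $\int_0^a v^{-\gamma}\ell(v)\,dv = \infty$ for $\gamma > 1$ — wait, this integral diverges, which is exactly what we want: $G(w) \to \infty$ as $w \to 0^+$, consistent with $u(t) \to 0$ as $t \to \infty$.

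The key step is to identify the regular variation index of $G$ at $0^+$ and then invert. By Karamata's theorem (e.g. \cite[Theorem 1.5.11]{Bingham} on integrals of regularly varying functions), since $v \mapsto 1/\Psi(v)$ is regularly varying at $0$ with index $-\gamma < -1$, the tail integral $G(w) = \int_w^a \frac{dv}{\Psi(v)}$ is regularly varying at $0^+$ with index $1-\gamma < 0$; more precisely $G(w) \sim \frac{w}{(\gamma-1)\Psi(w)}$ as $w \to 0^+$. Now $G\colon (0,a] \to [G(a),\infty)$ is a decreasing bijection, and its inverse $G^{-1}$ is therefore regularly varying at $\infty$ with index $1/(1-\gamma)$, by the standard result on asymptotic inverses of regularly varying functions (e.g. \cite[Theorem 1.5.12]{Bingham}). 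Since $u(t) = G^{-1}(t + C)$ for all large $t$, and $t \mapsto t + C$ is regularly varying at infinity with index $1$, composition gives that $u$ is regularly varying at infinity with index $1/(1-\gamma) = (1-\gamma)^{-1}$.

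The main obstacle is making the inversion step rigorous when $\Psi$ — hence $G$ — carries a nontrivial slowly varying factor, since then $G$ need not be given by a clean closed form and one must invoke the general theory of asymptotic inverses (de Bruijn conjugates / Karamata) rather than an explicit computation. A secondary technical point is handling the passage from "$u(t) = G^{-1}(t+C)$ for $t$ large enough" back to a statement about $u$ on all of $\rplus$; but since regular variation at infinity is an asymptotic property, only the large-$t$ behavior matters, so this causes no real difficulty. One should also verify at the outset that \eqref{odepsi} has a well-defined solution decreasing to $0$, which follows from the local Lipschitz property of $\Psi$ on $\rplus$ and the sign conditions $\Psi > 0$, $\Psi(0^+) = 0$ noted above.
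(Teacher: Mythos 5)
Your proposal is correct and follows the same overall strategy as the paper: separate variables, express the inverse function $u^{-1}$ (essentially your $G$, up to sign and an additive constant) as an integral of $1/\Psi$, establish that this integral is regularly varying at $0^+$ with index $1-\gamma$, and then invert. The paper accomplishes the two key sub-steps by different means. For the regular variation of $u^{-1}$, instead of citing Karamata's direct theorem on integrals of regularly varying functions, the paper computes $\lim_{\tau\to0} u^{-1}(\tau s)/u^{-1}(\tau) = s^{1-\gamma}$ directly by L'H\^opital's rule from the integral representation; this is a more elementary, self-contained substitute for the Karamata integral theorem. For passing from regular variation of $u^{-1}$ at $0$ to regular variation of $u$ at $\infty$, the paper applies its Uniform Convergence Lemma (Lemma~\ref{uclemma}) to the algebraic identity $x = u^{-1}(u(tx))/u^{-1}(u(t))$ rather than invoking the asymptotic-inverse theorem. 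Both routes are valid; the trade-off is elementariness versus citing standard black-box results. One small technical point in your route: the asymptotic-inverse theorem in Bingham--Goldie--Teugels is formulated for functions in $R_\rho$ with $\rho>0$ at infinity, whereas your $G$ is decreasing and regularly varying at $0^+$ with negative index $1-\gamma$, so you should explicitly reduce to the standard case (e.g.\ set $H(x)=G(1/x)$, which is increasing and regularly varying at $\infty$ with index $\gamma-1>0$, note $u(t)=1/H^{-1}(t+C)$, and conclude). The paper's argument via Lemma~\ref{uclemma} sidesteps this reduction. Finally, your observation that $G(0^+)=\infty$ (so that $u$ does not vanish in finite time and is therefore invertible on all of $(0,\infty)$) corresponds to the paper's separate verification that $\int_0^1 dw/\Psi(w)=\infty$, which the paper proves by a hands-on rescaling estimate rather than by citing Karamata; the conclusion is the same.
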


\begin{pf}
First assume $u$ is invertible. Then it suffices to show that the
function $u^{-1}\dvtx(0, u(0^+)) \to\rplus$ is regularly varying at
$s =
u(\infty) = 0$ with index $1/ (1-\gamma)^{-1} = 1-\gamma$. In that
case, we apply Lemma~\ref{uclemma}
to the identity
\[
\bigl(x^{{1}/{(1-\gamma)}} \bigr)^{1-\gamma} = \frac
{u^{-1}(u(tx))}{u^{-1}(u(t))} =
\frac{u^{-1} ({u(tx)}/{(u(t))}
u(t) )}{u^{-1}(u(t))},
\]
to obtain
\[
\lim_{t \to\infty} \frac{u(tx)}{u(t)} = x^{{1}/{(1-\gamma)}},
\]
for all $x > 0$. Hence, $u$ is regularly varying at infinity with index
$(1-\gamma)^{-1}$.

Obviously, $u$ is decreasing when $u >0$. Therefore, to show that $u$
is invertible, we must show that $u$ does not vanish in finite time.
Writing (\ref{odepsi}) in integral form, we have
%
\begin{equation}
\label{intode} t - t_0 = \int_{u(t)}^{u(t_0)}
\frac{1}{\Psi(w)} \,dw.
\end{equation}
Thus, $u$ vanishes in finite time if and only if $\int_0^1 \frac
{1}{\Psi
(w)} \,dw < \infty$. Note that
$\Psi(s) = s^\gamma L(s)$ where $L$ is slowly varying. Also,
%
\begin{equation}\qquad
\label{vanish} \int_s^1 \frac{1}{\Psi(w)} \,dw =
\frac{s}{\Psi(s)} \int_1^{{1}/{s}} \frac{\Psi(s)}{\Psi(sw)}
\,dw = \frac{1}{s^{\gamma-1}L(s)} \int_1^{{1}/{s}}
\frac{\Psi(s)}{\Psi(sw)} \,dw.
\end{equation}
Since $\Psi$ is regularly varying at zero, the integral term on the
right-hand side is bounded away from zero for $s$ sufficiently small.
Also, $s^r L(s) \to0$ as $s \to0$ for all $r > 0$; see, for instance,
\cite{Feller}, Lemma~VIII.8.2. Hence the left-hand side of (\ref{vanish})
diverges as $s \to0$, and we conclude that $u$ is invertible.

It remains to show that $u^{-1}$ is regularly varying at zero with
index $1-\gamma$. We consider any fixed
$0 < s_0 < u(0^+)$. By a change of variables, (\ref{intode}) implies
\[
u^{-1}(s) = u^{-1}(s_0) - \int
_{s_0}^{s} \frac{1}{\Psi(w)} \,dw
\]
for all $0 < s < u(0^+)$. Therefore, by L'H\^{o}pital's rule, we obtain
for any $s>0$
\[
\lim_{\tau\to0} \frac{u^{-1}(\tau s)}{u^{-1}(\tau)} = \lim_{\tau
\to
0}
\frac{s  ( u^{-1}  )^{\prime}(\tau s)}{  ( u^{-1}
)^{\prime} (\tau)} = \lim_{\tau\to0} \frac{ {-s
}/{(\Psi(\tau s))}}{ {-1}/{(\Psi(\tau))}} = \lim
_{\tau\to
0} \frac{s \Psi(\tau)}{\Psi(\tau s)} = s^{1-\gamma}.
\]
This completes the proof.
\end{pf}

Solutions of the autonomous equation (\ref{odepsi}) have a translation
invariance which plays an important role in our analysis.
Specifically, if $\Psi> 0$ is continuous and $\Psi(0^+)= 0$ (for
instance, any critical branching mechanism), and if $u \geq v >0$ are
solutions of (\ref{odepsi}) defined on $\rplus$, then
%
\[
v(t) = u\bigl(t - \tau+ u^{-1}\bigl(v(\tau)\bigr)\bigr)
\]
for all $t, \tau> 0$.
Recall that if $\nu\dvtx E \to\mathcal{M}_F$ is a weak solution
of~\eqref
{e:gsmain},
then the function $\varphi(\cdot,q)$, defined by (\ref{defnu}), solves
(\ref{odepsi}) for all $0 \leq q \leq\infty$. In particular, the function
%
\begin{equation}
\label{etadef} \eta(t) \defeq\langle\nu_t, 1 \rangle= \varphi(t,
\infty)
\end{equation}
solves (\ref{odepsi}). Since $\varphi(t, \infty) \geq\varphi(t,q)
> 0$, we obtain the identity
%
\begin{equation}
\label{idntyphi} \varphi(t,q) = \eta\bigl(t-\tau+ {\eta}^{-1}\bigl(
\varphi(\tau,q)\bigr)\bigr)
\end{equation}
for all $q > 0$. Thanks to this identity, the characterization of
scaling limits in the case of regularly varying branching mechanisms is
relatively straightforward.

\begin{pf*}{Proof of Theorem~\ref{main1}}
Let $\varphi$ and $\eta$ be defined by~\eqref{defnu} and~\eqref
{etadef}, respectively. Assuming~\eqref{limhatnu} holds, we have
\[
\alpha(t)\eta(t) = \bigl\langle\alpha(t)\nu_t\bigl(
\lambda(t)^{-1} \,dx\bigr), 1 \bigr\rangle\to \langle\hat\nu, 1 \rangle.
\]
Moreover, taking into account~\eqref{idntyphi}, we have
%
\begin{eqnarray}
\label{ehatnu} \bigl\langle\hat\nu, 1 - e^{-qx} \bigr\rangle&=& \lim
_{t \to\infty} \bigl\langle\alpha(t)\nu_t\bigl(
\lambda(t)^{-1} \,dx\bigr), 1 - e^{-qx} \bigr\rangle
\nonumber
\\
&=& \lim_{t \to\infty} \alpha(t) \varphi\bigl(t, q\lambda(t)
\bigr)
\nonumber
\\[-8pt]
\\[-8pt]
\nonumber
&=& \lim_{t \to\infty} \alpha(t) \eta(t) \cdot\lim
_{t \to\infty
} \frac
{\eta(t[1 - {\tau}/{t} + ({1}/{t}){\eta}^{-1}
(\varphi(\tau,q\lambda(t)))])}{\eta(t)}
\\
&=& \langle\hat\nu, 1 \rangle \biggl( \lim_{t \to\infty}
\biggl[ 1 + \frac{1}{t}{\eta}^{-1}\bigl(\varphi\bigl(\tau,q
\lambda(t)\bigr)\bigr) \biggr] \biggr)^{{1}/{(1-\gamma)}},\nonumber
\end{eqnarray}
where the last equality follows from Lemmas \ref{uclemma} and \ref{rvlem1}.
Since the left-hand side is finite and independent of $\tau$, we
conclude that there exists $\chi(q) < \infty$ such that
%
\begin{equation}
\label{eqnChi} \frac{1}{t}{\eta}^{-1}\bigl(\varphi\bigl(\tau,q
\lambda(t)\bigr)\bigr) \to\chi(q),
\end{equation}
for all $\tau> 0$.
Since $\Psi>0$ and $\Psi(0^+)= 0$, the function
$\eta$, which solves~\eqref{odepsi}, is decreasing and $\eta(\infty
) =
0$. Also, by the analysis of Section~\ref{s:cmd}, $\varphi(\tau,
\cdot
)$ is increasing with $\varphi(\tau, 0) = 0$ for all $\tau> 0$. Hence,
${\eta}^{-1} (\varphi(\tau, \cdot))$ is decreasing with ${\eta}^{-1}
(\varphi(\tau, 0)) = +\infty$. Since the limit in (\ref{ehatnu}) is
nonconstant in q, we must have $\chi(q) > 0$ and $\lambda(t) \to0$
(otherwise, $\chi$ vanishes on an unbounded interval). Therefore,
\[
\frac{({1}/{t}){\eta}^{-1}(\varphi(\tau,q\lambda(t)))}{
({1}/{t}){\eta}^{-1}(\varphi(\tau,\lambda(t)))} \to\frac{\chi
(q)}{\chi
(1)} >0
\]
as $t \to\infty$. A standard rigidity lemma (see, e.g., \cite{Feller},
Lemma~VIII.8.2) implies $\chi(q) = \chi(1) q^{\hat{\rho}}$ for some
$\hat{\rho}$, and implies ${\eta}^{-1} (\varphi(\tau, \cdot))$ is
regularly varying at $q=0$ with index $\hat{\rho}$.
Note by~\eqref{eqnChi} that $\chi$ is decreasing because $\varphi(t,
\cdot)$ is decreasing and $\eta^{-1}$ is increasing.
Further $\chi$ is not constant and so $\hat{\rho} <0$.
Also, since ${\eta}^{-1}$ is regularly varying at $q=0$ with index
$1-\gamma$ (see the proof of Lemma~\ref{rvlem1}), it follows that
$\varphi(\tau, \cdot)$ is regularly varying at $q=0$ with index
$\rho=
\hat{\rho} / (1-\gamma)$ for all $\tau> 0$. Therefore,
%
\begin{equation}
\label{e:phitilde} \hat{\varphi}(q) \defeq \bigl\langle\hat{\nu},
1 -
e^{-qx} \bigr\rangle= \langle\hat\nu, 1 \rangle \bigl[1 + \chi(1)
q^{{\rho}(1-\gamma)} \bigr]^{{1}/{(1-\gamma)}}.
\end{equation}
As a pointwise limit of Bernstein functions, $\hat{\varphi}$ is a
Bernstein function. Hence, we must have $0 < \rho\leq1$, otherwise
$\hat\varphi^{\prime\prime}$ takes positive values near $q = 0$.

Now let us show that~\eqref{tails} holds. For $t > 0$, we write
\[
\varphi(t,q) = \int_0^{\infty}
\bigl(1-e^{-qx}\bigr) \nu_t (dx) \sim q^{\rho} L
\bigl(t,q^{-1}\bigr) \qquad\mbox{as } q \to0,
\]
where $L(t,\cdot)$ is slowly varying at infinity. Next, we claim that
$q \partial_q \varphi(t,q) \sim\rho\varphi(t,q)$ as $q \to0$ for all
$t > 0$. Indeed, since $\del_q^2 \varphi\leq0$, we have
\[
\frac{q {\partial_q \varphi} (t,q)}{\varphi(t,q)} \geq\frac{
{\varphi(t,xq)}/{(\varphi(t,q))} - 1}{x-1}
\]
for all $x > 1$. Hence,
\[
\liminf_{q \to0} \frac{q {\partial_q \varphi} (t,q)}{\varphi(t,q)} \geq\liminf
_{q \to0} \frac{{\varphi(t,xq)}/{(\varphi(t,q))} -
1}{x-1} = \frac{x^\rho- 1}{x-1}.
\]
Also, the reverse inequality holds if we consider $x<1$ and take the
limit supremum instead. Thus, as $x \to1$, we recover the limit $q
\partial_q \varphi(t,q)/ \varphi(t,q) \to\rho$.

Therefore, we have
\[
\partial_q \varphi(t,q) = \int_0^{\infty}
e^{-qx} x\nu_t (dx) \sim \rho q^{\rho-1} L
\bigl(t,q^{-1}\bigr) \qquad\mbox{as } q \to0.
\]
This establishes a regular variation condition on the Laplace transform
of the measure $x\nu_t(dx)$.
By a classical Tauberian result (see, e.g., \cite{MP04}, Theorem~3.2)
we obtain the following equivalent condition on the distribution function:
\[
\int_0^x y \nu_t(dy) \sim
x^{1-\rho} L(t,x) \cdot\frac{\rho
}{\Gamma
(2-\rho)} \qquad\mbox{as } x \to\infty.
\]
Hence, redefining $L$ by a multiplicative factor, we obtain~\eqref{tails}.

Finally, let us verify~\eqref{e:selfsim}. A slight variation of
estimate~\eqref{ehatnu} gives, for all $0 \leq q \leq\infty$,
%
\begin{eqnarray}
\label{l:weaklim} 
&&\lim_{s \to\infty} \bigl\langle\alpha(s)
\nu_{st}\bigl(\lambda(s)^{-1} \,dx\bigr),  1 -
e^{-qx} \bigr\rangle
\nonumber
\\
&&\qquad = \lim_{s \to\infty} \alpha(s) \varphi\bigl(st, q\lambda(s)\bigr)
\nonumber
\\
&&\qquad= \langle\hat\nu, 1 \rangle \biggl( \lim_{s \to\infty} \biggl[ t +
\frac{1}{s}{\eta}^{-1}\bigl(\varphi\bigl(\tau, q\lambda(s)\bigr)
\bigr) \biggr] \biggr)^{{1}/{(1-\gamma)}}
\nonumber
\\[-8pt]
\\[-8pt]
\nonumber
&&\qquad= \langle\hat\nu, 1 \rangle \bigl[t + \chi(1) q^{{\rho
}(1-\gamma)}
\bigr]^{{1}/{(1-\gamma)}}
\\
&&\qquad= \bigl[\langle\hat\nu, 1 \rangle^{1-\gamma} t + \bigl(\langle \hat \nu, 1
\rangle^{{1}/{\rho}} \chi(1)^{{1}/{(\rho(1-\gamma))}} q \bigr)^{{\rho}(1-\gamma)}
\bigr]^{{1}/{(1-\gamma)}}
\nonumber
\\
&&\qquad= \bigl\langle\mu^{\beta, \gamma, \rho}_t\bigl(c_\lambda^{-1}
\,dx\bigr), 1 - e^{-qx} \bigr\rangle,\nonumber
\end{eqnarray}
where $\mu^{\beta, \gamma, \rho}$ is defined by~\eqref{ssfamily}, with
$\beta= (\gamma-1)^{-1} {\langle\hat\nu, 1 \rangle}^{1-\gamma}$ and
%
\begin{equation}
\label{scalelambda} c_\lambda= \langle\hat\nu, 1 \rangle^{{1}/{\rho}} \chi
(1)^{{1}/{(\rho(1-\gamma))}} = \biggl[  \frac{d}{dq} \bigg\rrvert _{q=0}
\hat\varphi\bigl(q^{{1}/{\rho}}\bigr) \biggr]^{{1}/{\rho}}
\end{equation}
chosen according with~\eqref{mlfun}.
The last equality, which is by no means obvious, follows from~\eqref
{e:phitilde}. In particular, when $\rho= 1$, we obtain the relation
$c_\lambda= \hat\varphi^{\prime}(0) = \langle x \hat\nu, 1
\rangle$;
cf. Theorem~\ref{funthm}.

Since~\eqref{l:weaklim} is valid for all $0 \leq q \leq\infty$ (note,
carefully, that we include $q = \infty$) the continuity theorem (see,
e.g., \cite{Feller}, Theorem~XIII.1.2) implies that $\alpha(s)\nu
_{st}(\lambda(s)^{-1} \,dx)$ converges vaguely to $\mu^{\beta, \gamma,
\rho}_t(c_\lambda^{-1} \,dx)$. Also, the case $q = \infty$ implies
convergence in total measure. We therefore obtain convergence in the
weak topology; see, for instance, \cite{Bauer}, Theorem~30.8. Hence,
taking into account~\eqref{ssfamily}, we obtain~\eqref
{hatnurep}--\eqref
{branchlim}. This completes the proof of part (i) of the theorem.
%
%

Now suppose there exists $t_0 >0$, $\rho\in(0,1]$, and $L$ slowly
varying at infinity such that (\ref{tails}) holds for $t=t_0$. Again,
by the Tauberian theorem,
we have $\partial_q \varphi(t_0,\cdot)$ regularly varying at $q=0$ with
index $\rho-1$.
The regular variation of $\varphi(t_0, \cdot)$ at $q = 0$ with index
$\rho$ then follows from the observation
\[
\frac{\varphi(t_0, q)}{q \partial_q \varphi(t_0,q)} = \frac
{\varphi
(t_0, q) - \varphi(t_0,0)}{q \partial_q \varphi(t_0,q)} = \int_0^1
\frac
{\partial_q\varphi(t_0, qz)}{\partial_q\varphi(t_0, q)} \,dz \to \frac
{1}{\rho} \qquad\mbox{as } q \to0.
\]
The convergence of the integral term is easy to verify; see, for
instance, \cite{MP08}, Lemma~3.3.

Finally, for $s>0$, let $\alpha(s) = \eta(s)^{-1}$ and define
$\lambda
(s)$ by the relation
%
\begin{equation}
\label{deflambda} \frac{1}{s}{\eta}^{-1}\bigl(\varphi
\bigl(t_0,\lambda(s)\bigr)\bigr) = 1.
\end{equation}
It follows that
\[
\frac{1}{s}{\eta}^{-1}\bigl(\varphi\bigl(t_0,q
\lambda(s)\bigr)\bigr) \to q^{\rho
(\gamma-1)},
\]
and we conclude, as above, that
\begin{eqnarray*}
\lim_{s \to\infty} \bigl\langle\eta(s)^{-1}
\nu_{st}\bigl(\lambda(s)^{-1} \,dx\bigr), 1 -
e^{-qx} \bigr\rangle&=& \bigl[t + q^{{\rho
}(1-\gamma
)} \bigr]^{{1}/{(1-\gamma)}}
\\
&= &\bigl\langle\mu^{\beta, \gamma, \rho}_t(dx), 1 - e^{-qx}
\bigr\rangle
\end{eqnarray*}
for all $t > 0$ and for all $0 \leq q \leq\infty$, where $\beta=
(\gamma-1)^{-1}$. Weak convergence of the measures follows as before.
This completes the proof.
\end{pf*}

\section{Scaling limits for fundamental solutions}\label{s:main2}

In this section we show that a necessary condition for asymptotic
self-similarity of fundamental solutions is regular variation of the
branching mechanism $\Psi$. In view of Definition~\eqref{funsoln} and
property (ii) of Theorem~\eqref{cmdphi}, we consider critical branching
mechanisms $\Psi$ for which Grey's condition holds. Our main result is
the following:

\begin{theorem}\label{funthm}
Let $\mu\colon\rplus\to\mathcal{M}_F$ be the fundamental solution
of~\eqref{e:gsmain}, where $\Psi$ is a critical branching mechanism
verifying Grey's condition.
Further, assume there exists a nonzero probability measure $\hat\mu
\in
\mathcal{M}_F$ and a function $\lambda> 0$ such that
%
\begin{equation}
\label{limhatmu} \frac{
\mu_t(\lambda(t)^{-1} \,dx)
}{\langle\mu_t, 1 \rangle} \mathop{\xrightarrow}^{w} \hat\mu
\end{equation}
as $t \to\infty$. Then
$\Psi$ is regularly varying at $u=0$ with index $\gamma\in(1,2]$.
Furthermore, $x\hat\mu\in\mathcal{M}_F$ and $\lambda(t) / \langle
\mu
_t,1 \rangle\to\langle x \hat\mu, 1 \rangle$ as $t\to\infty$.
Moreover, we have the representation
%
\begin{equation}
\label{e:selfsim2} \hat\mu= F_{\gamma, 1}\bigl(\langle x \hat\mu, 1
\rangle^{-1} \,dx\bigr),
\end{equation}
where $F_{\gamma, 1}$ is the generalized Mittag--Leffler distribution
defined by~\eqref{dlaplace2}.
\end{theorem}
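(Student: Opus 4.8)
The plan is a renormalization argument: rescale the fundamental solution together with the equation it satisfies, show that subsequential limits exist and are \emph{self-similar}, and then invoke the rigidity of self-similar profiles from Section~\ref{s:gml} to force a power law, which propagates back to $\Psi$. First I would reduce to Laplace transforms. Write $\eta(t):=\langle\mu_t,1\rangle=\Phi(t,\infty)$; then $\eta$ solves $\eta'=-\Psi(\eta)$ and, by Grey's condition with Proposition~\ref{cmdphi}(ii) and criticality of $\Psi$, is a $C^1$ strictly decreasing bijection of $(0,\infty)$. Two structural facts will be used repeatedly: the fundamental-solution identity $\Phi(t,q)=\eta\big(t+\eta^{-1}(q)\big)$ (since $s\mapsto\eta^{-1}(\Phi(s,q))$ has $s$-derivative $1$ by~\eqref{e:back1} and tends to $\eta^{-1}(q)$ as $s\to0$), and conservation of the first moment $\langle\mu_t,x\rangle=\partial_q\Phi(t,0^+)\equiv1$ for critical $\Psi$ (Remark~\ref{massdelta} and~\eqref{ode2}). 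In transformed form, \eqref{limhatmu} is equivalent to $\bar\Phi(q):=\lim_{t\to\infty}\Phi(t,q\lambda(t))/\eta(t)=\langle\hat\mu,1-e^{-qx}\rangle$ for $q\in(0,\infty)$, together with convergence of total mass (so $\bar\Phi(\infty)=1$); as a pointwise limit of the Bernstein functions $q\mapsto\Phi(t,q\lambda(t))/\eta(t)$, $\bar\Phi$ is Bernstein, with $\bar\Phi(0^+)=0$ and $\bar\Phi$ non-constant, hence strictly increasing.

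For the renormalization, fix a scale $\sigma>0$ and set $\eta^{(\sigma)}(t):=\eta(\sigma t)/\eta(\sigma)$, $c_\sigma:=\lambda(\sigma)/\eta(\sigma)$, and $\Psi^{(\sigma)}(u):=\tfrac{\sigma}{\eta(\sigma)}\Psi(\eta(\sigma)u)$. Then $\Phi(\sigma t,q\lambda(\sigma))/\eta(\sigma)=\eta^{(\sigma)}\big(t+(\eta^{(\sigma)})^{-1}(c_\sigma q)\big)$, the function $\eta^{(\sigma)}$ solves $\partial_t\eta^{(\sigma)}=-\Psi^{(\sigma)}(\eta^{(\sigma)})$ with $\eta^{(\sigma)}(1)=1$, and each $\Psi^{(\sigma)}$ is again a critical branching mechanism. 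Along a subsequence $\sigma_n\to\infty$, Helly's theorem plus a diagonal argument over rational $t$ extracts a decreasing limit $\eta^{(\sigma_n)}\to\ell$; the constraints $\bar\Phi(0^+)=0$, $\bar\Phi(\infty)=1$ and strict monotonicity of $\bar\Phi$ force $\ell$ to be a continuous strictly decreasing bijection of $(0,\infty)$ with $\ell(1)=1$, and force $c_{\sigma_n}\to c_*\in(0,\infty)$ (the values $c_*=0$ and $c_*=\infty$ would make the limit identically $0$ or constant in $q$, contradicting non-constancy of $\bar\Phi$). Consequently $\Phi(\sigma_n t,q\lambda(\sigma_n))/\eta(\sigma_n)\to\Theta(t,c_*q)$ with $\Theta(t,q):=\ell\big(t+\ell^{-1}(q)\big)$, so that $\bar\Phi(q)=\Theta(1,c_*q)$; moreover $\Psi^{(\sigma_n)}\to\hat\Psi$ locally uniformly (convexity giving precompactness, Bernstein-ness of derivatives passing to the limit), $\hat\Psi$ is a critical branching mechanism, and $\Theta$ is the fundamental solution of $\partial_t\Theta=-\hat\Psi(\Theta)$.

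The decisive step is that $\Theta$ is self-similar. Comparing the renormalizations at scales $a\sigma_n$ and $\sigma_n$ (for fixed $a>0$, after refining the subsequence so the scales $a\sigma_n$ also converge) and using $\eta^{(a\sigma_n)}(t)=\eta^{(\sigma_n)}(at)/\eta^{(\sigma_n)}(a)$ yields, in the limit, the scaling identity $\Theta(a,\ell(a)p)=\ell(a)\,\Theta(1,p)$ for all $a,p>0$. Together with the semigroup property of $\Theta$ and the equation $\partial_t\Theta=-\hat\Psi(\Theta)$, this is exactly the self-similar setting of Section~\ref{s:gml}: the separation-of-variables computation leading to~\eqref{gensoln}--\eqref{mlfun} forces $\hat\Psi(u)=\beta u^\gamma$ with $\ell(a)=a^{1/(1-\gamma)}$, where the normalization $\ell(1)=1$ pins $\beta=(\gamma-1)^{-1}$, while $\hat\Psi$ being a critical branching mechanism forces $\gamma\in[1,2]$ and non-constancy of $\bar\Phi$ forces $\gamma>1$. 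Hence $\bar\Phi(q)=\Theta(1,c_*q)=\big[1+(c_*q)^{-(\gamma-1)}\big]^{-1/(\gamma-1)}$, which by~\eqref{dlaplace2} equals $\langle F_{\gamma,1}(c_*^{-1}dx),1-e^{-qx}\rangle$, so $\hat\mu=F_{\gamma,1}(c_*^{-1}dx)$; computing the slope of $\bar\Phi$ at $q=0$ gives $c_*=\langle x\hat\mu,1\rangle<\infty$, i.e.\ $x\hat\mu\in\mathcal{M}_F$. Since $\gamma$, $\beta$, $c_*$ are thereby determined by the fixed function $\bar\Phi$, every subsequential limit coincides, so along the full net $c_\sigma=\lambda(\sigma)/\langle\mu_\sigma,1\rangle\to\langle x\hat\mu,1\rangle$ and $\Psi^{(\sigma)}(u)=\tfrac{\sigma}{\eta(\sigma)}\Psi(\eta(\sigma)u)\to(\gamma-1)^{-1}u^\gamma$; dividing by $\Psi^{(\sigma)}(1)$ gives $\Psi(\eta(\sigma)u)/\Psi(\eta(\sigma))\to u^\gamma$, and since $\eta$ maps $(0,\infty)$ onto $(0,\infty)$ this is precisely regular variation of $\Psi$ at $0$ with index $\gamma\in(1,2]$. (Once $\Psi$ is known to be regularly varying, Lemma~\ref{rvlem1} gives $\eta$ regularly varying at $\infty$ with index $1/(1-\gamma)$, which re-derives $\beta=(\gamma-1)^{-1}$ via $\sigma\Psi(\eta(\sigma))/\eta(\sigma)=-\sigma(\log\eta)'(\sigma)\to1/(\gamma-1)$.)

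The main obstacle is the compactness-and-self-similarity core described in the two preceding paragraphs: ruling out degeneration of the rescaled mechanisms $\Psi^{(\sigma)}$ (limit $\equiv0$) and blow-up (instantaneous relaxation), so that a genuine non-trivial limiting branching mechanism and a genuine continuous strictly monotone profile $\ell$ exist, and extracting the scaling identity $\Theta(a,\ell(a)p)=\ell(a)\,\Theta(1,p)$ cleanly (which needs the usual subsequence refinements to handle the auxiliary scales $a\sigma_n$ and the ratios $\lambda(a\sigma_n)/\eta(a\sigma_n)$). Once a self-similar limit is in hand, the rigidity from Section~\ref{s:gml} and the Tauberian/Karamata bookkeeping that identifies the constants are routine.
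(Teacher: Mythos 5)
Your proposal is, at its core, the same renormalization strategy the paper uses, written in the dual variable: you rescale $\eta(\sigma t)/\eta(\sigma)$ and $\Psi(\eta(\sigma)u)/\Psi(\eta(\sigma))$ with $\sigma\to\infty$, whereas the paper rescales $\zeta(s\tau)/\zeta(s)$ and $\Psi(su)/\Psi(s)$ with $s=\eta(t)\to0$, where $\zeta=\eta^{-1}$. These are equivalent changes of variable, and your Step-1-type compactness (Helly for the convex family $\Psi^{(\sigma)}$, $\eta^{(\sigma)}$ bijective, etc.) mirrors the paper's Step~1. The structural identity $\Phi(t,q)=\eta(t+\eta^{-1}(q))$ you use is exactly the paper's~\eqref{e:etazeta}, and the idea that a nontrivial subsequential limit satisfies a homogeneous (hence rigid) limiting equation is exactly the stated philosophy of the paper.

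However, there is a genuine gap at the decisive step, and it is precisely the point the paper spends most of its effort on. To get your clean scaling identity $\Theta(a,\ell(a)p)=\ell(a)\Theta(1,p)$, you need the ratio $c_\sigma=\lambda(\sigma)/\eta(\sigma)$ to converge to the \emph{same} finite limit $c_*$ along both $\sigma_n$ and $a\sigma_n$. Tracing your own computation, what one actually obtains after refining the subsequence for a fixed $a$ is $\Theta\big(a,(c^{(a)}/c_*)\ell(a)p\big)=\ell(a)\Theta(1,p)$, where $c^{(a)}$ is whatever $c_{a\sigma_n}$ happens to converge to along the refined subsequence; nothing in your argument forces $c^{(a)}=c_*$. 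Equivalently, the hypothesis only gives you a self-similar relation with \emph{two} a priori independent scaling functions, which, as in Proposition~\ref{ssform}, is compatible with any index $\rho\in(0,1]$ --- whereas the theorem asserts $\rho=1$. Conservation of the first moment $\partial_q\Phi(t,0^+)\equiv1$ gives $c_\sigma$ as the slope at $0$ of the rescaled Bernstein function, but pointwise convergence of concave functions to $\bar\Phi$ only yields $\liminf c_\sigma\geq\bar\Phi'(0^+)$; it neither caps $c_\sigma$ nor pins it down, and $\bar\Phi'(0^+)=\infty$ (i.e.\ $x\hat\mu\notin\mathcal M_F$) is not excluded by this observation. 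Ruling out this degeneracy is exactly the content of the paper's Step~2, Case~2 (showing $\hat\ell_0>0$ is impossible via a delicate oscillation argument comparing $\limsup$ and $\liminf$ of $\ell(\tau)/\tau$ against the local-uniform convergence $\ell_{s_k}\to\hat\ell$), and you explicitly flag this as ``the main obstacle'' without supplying an argument. So as written the proposal does not constitute a proof; what it defers as an obstacle is the substantive part.

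A secondary, smaller issue: the ``rigidity from Section~\ref{s:gml}'' you cite runs in the wrong direction. Proposition~\ref{ssform} starts from a power-law $\Psi$ and produces the profile; what you need is that a self-similar profile forces $\hat\Psi$ to be a power law. That converse does follow (differentiate your scaling identity in $a$ and $p$, combine with $\partial_t\Theta=-\hat\Psi(\Theta)$, and conclude $\hat\Psi(uv)/\hat\Psi(v)$ is independent of $v$, as in the paper's derivation around~\eqref{relations}), but it is a separate computation, not a citation. With both points addressed, your outline would land on essentially the same proof the paper gives.
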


Before proving Theorem~\ref{funthm}, we discuss a few basic properties
of fundamental solutions.
Let us define, as before, the total measure function
\[
\eta(t) \defeq\langle\mu_t, 1 \rangle= \Phi(t, \infty),
\]
where $\Phi$ is given by~\eqref{funphi}. Since $\Phi(t, \cdot)$ is
increasing, we have
\[
\lim_{t\to0^+} \eta(t) > \lim_{t \to0^+} \Phi(t,q)
= q
\]
for all $q>0$. Hence, $\eta(t) \to\infty$ as $t \to0$.
Moreover, $\eta$
solves~\eqref{odepsi}, where~$\Psi>0$ and $\Psi(0^+) = 0$. Hence,
$\eta(t)$ decreases to zero as $t \to\infty$. It follows that $\eta
\dvtx
\rplus\to\rplus$ is bijective, and it is straightforward to check that
its inverse is given by
%
\begin{equation}
\label{f:zeta} \zeta(\tau) \defeq\eta^{-1}(\tau) = \int
_\tau^{\infty} \frac{1}{\Psi(u)} \,du.
\end{equation}
With this notation, $\Phi$ in \eqref{funphi} has the representation
%
\begin{equation}
\label{e:etazeta} \Phi(t,q) = \eta\bigl(t + \zeta(q)\bigr),
\end{equation}
which is a special case of~\eqref{idntyphi}. From this it follows
easily that $\Phi$ satisfies the forward equation
%
\begin{equation}
\label{e:fwd} \partial_t\Phi+ \Psi(q)\partial_q\Phi=
0.
\end{equation}

Finally, we note the following useful estimates.

\begin{lemma}\label{psilemma}
Assume $\Psi$ is a critical branching mechanism that satisfies Grey's
condition,
and assume $\zeta$ is defined by~\eqref{f:zeta}.
Then the following hold for all $s>0$:
\begin{longlist}[(iii)]
\item[(i)] $ \frac{d}{ds}  [ \frac{\Psi(s)}{s^2}
] \leq0 \leq\frac{d}{ds}  [ \frac{\Psi(s)}{s}  ]$,
\item[(ii)] $ \frac{d^2}{ds^2}  [\frac{1}{\Psi
(s)} ] \geq0$,
\item[(iii)] $ \frac{d}{ds} [s \zeta(s)] \geq0$.
\end{longlist}
\end{lemma}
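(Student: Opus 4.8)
The plan is to deduce all three estimates from a short list of elementary properties of $\Psi$. Since $\Psi$ is a critical branching mechanism, $\Psi(0^+)=0$ and $\Psi'(0^+)=\alphabr=0$, and (as noted just before Lemma~\ref{cmd1}) $\Psi'$ is a positive Bernstein function; equivalently $\Psi'\ge0$ and $\Psi''$ is completely monotone, so in particular $\Psi''\ge0$ and $\Psi''$ is nonincreasing. Grey's condition forces $(\betabr,\pi)\ne(0,0)$, hence $\Psi'>0$ and $\Psi>0$ on $\rplus$, so $1/\Psi$ and $\zeta$ in~\eqref{f:zeta} are well defined. The first thing I would record are the two pointwise bounds that do all the work: since $\Psi''$ is nonincreasing, $\Psi'(s)=\int_0^s\Psi''(u)\,du\ge s\Psi''(s)$; and since $\Psi'$ is nondecreasing with $\Psi'(0^+)=0$, $\Psi(s)=\int_0^s\Psi'(u)\,du\le s\Psi'(s)$, for every $s>0$.

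For (i), write $\frac{d}{ds}\bigl[\Psi(s)/s\bigr]=\bigl(s\Psi'(s)-\Psi(s)\bigr)/s^2$, which is $\ge0$ directly from $\Psi(s)\le s\Psi'(s)$. For the other inequality set $k(s)=s\Psi'(s)-2\Psi(s)$, so that $\frac{d}{ds}\bigl[\Psi(s)/s^2\bigr]=k(s)/s^3$; then $k(0^+)=0$ (both $s\Psi'(s)$ and $\Psi(s)$ tend to $0$) and $k'(s)=s\Psi''(s)-\Psi'(s)\le0$ by the first pointwise bound, so $k\le0$ on $\rplus$.

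For (ii), a direct computation gives $(1/\Psi)''=\bigl(2(\Psi')^2-\Psi\Psi''\bigr)/\Psi^3$, so since $\Psi>0$ it suffices to prove $2(\Psi')^2\ge\Psi\Psi''$; using $\Psi\le s\Psi'$ together with $\Psi''\ge0$ gives $\Psi\Psi''\le s\Psi'\Psi''$, and then $s\Psi''\le\Psi'\le2\Psi'$ gives $s\Psi'\Psi''\le2(\Psi')^2$. For (iii), note $\zeta'(s)=-1/\Psi(s)$, so $\frac{d}{ds}\bigl[s\zeta(s)\bigr]=\zeta(s)-s/\Psi(s)$, and it remains to show $\zeta(s)\ge s/\Psi(s)$; by part (i) the map $u\mapsto\Psi(u)/u^2$ is nonincreasing, so $\Psi(u)\le\bigl(\Psi(s)/s^2\bigr)u^2$ for $u\ge s$, whence $\zeta(s)=\int_s^\infty\frac{du}{\Psi(u)}\ge\frac{s^2}{\Psi(s)}\int_s^\infty\frac{du}{u^2}=\frac{s}{\Psi(s)}$.

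This argument is essentially bookkeeping once the two pointwise bounds are in hand, so there is no serious obstacle; the only points deserving a line of justification are the limit $s\Psi'(s)\to0$ as $s\to0^+$ used in~(i), and the strict positivity of $\Psi$ on $\rplus$ that makes the various divisions and the integral defining $\zeta$ legitimate --- both immediate from criticality and Grey's condition.
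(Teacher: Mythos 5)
Your proof is correct and follows essentially the same route as the paper's: both rest on the elementary pointwise bounds $\Psi(s)\le s\Psi'(s)$ (convexity of $\Psi$ with $\Psi(0^+)=0$) and $s\Psi''(s)\le\Psi'(s)$ (concavity of $\Psi'$ with $\Psi'(0^+)=0$), then derive (i) by differentiation/integration, (ii) by the quotient rule plus the same chain of inequalities, and (iii) by comparing $1/\Psi(u)$ against the power $u^{-2}$ using (i). The only differences are presentational (you track $k(s)=s\Psi'(s)-2\Psi(s)$ and use $k(0^+)=0$, $k'\le0$, where the paper integrates the same inequality directly; and your (iii) avoids the paper's change of variables but is the identical estimate).
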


\begin{pf}
Part (i) is equivalent to the estimate
%
\begin{equation}
\label{cm2d} \frac{\Psi(s)}{s} \leq\Psi^{\prime}(s) \leq
\frac{2\Psi(s)}{s}.
\end{equation}
The first inequality in (\ref{cm2d}) follows from the convexity of
$\Psi
$. That is,
\[
\frac{\Psi(s)}{s} = \frac{\Psi(s)-\Psi(0)}{s-0} \leq\Psi^{\prime}(s).
\]
%
Similarly, the concavity of $\Psi^{\prime}$ gives the estimate
$\Psi^{\prime\prime}(s) \leq{\Psi^{\prime}(s)}/{s}$, which implies
\[
s\Psi^{\prime}(s) 
= \int_0^s
\bigl[\tau\Psi^{\prime\prime}(\tau) + \Psi^{\prime}(\tau)\bigr] \,d\tau\leq
\int_0^s 2\Psi^{\prime}(\tau) \,d\tau= 2
\Psi(s).
\]
For the proof of (ii), we compute
\[
\frac{d^2}{ds^2} \biggl[\frac{1}{\Psi(s)} \biggr] = \frac{2\Psi
^{\prime
}(s)^2 - \Psi^{\prime\prime}(s)\Psi(s)}{\Psi(s)^3},
\]
which is nonnegative by the estimate
\[
\frac{\Psi^{\prime\prime}(s)}{\Psi^{\prime}(s)} \leq\frac{1}{s} \leq \frac{\Psi^{\prime}(s)}{\Psi(s)} \leq
\frac{2\Psi^{\prime
}(s)}{\Psi(s)}.
\]
Finally, for the proof of (iii), observe that
%
\begin{eqnarray}
\label{estimate1} \frac{d}{ds} \bigl[s \zeta(s)\bigr] 
&=& \int_s^\infty\frac{1}{\Psi(u)} \,du -
\frac{s}{\Psi(s)} %
\nonumber
\\[-8pt]
\\[-8pt]
\nonumber
&=& \int_1^\infty
\frac{s}{\Psi(su)} \,du - \int_1^{\infty}
\frac
{s}{\Psi
(s)} \cdot\frac{1}{u^2} \,du.
\end{eqnarray}
For $u \geq1$, part (i) implies
\[
\frac{\Psi(su)}{(su)^2} \leq\frac{\Psi(s)}{s^2}.
\]
Hence (\ref{estimate1}) is nonnegative, and the proof is complete.
\end{pf}

\begin{pf*}{Proof of Theorem~\ref{funthm}}
Assuming~\eqref{limhatmu}, we have for all $0 < q \leq\infty$ that
\begin{eqnarray*}
\frac{\eta(t+\zeta(q\lambda(t)))}{\eta(t)} 
= \biggl\langle \frac{\mu_t(\lambda(t)^{-1} \,dx)}{ \langle\mu_t, 1 \rangle},
1-e^{-qx} \biggr\rangle
\to \hat\varphi(q) \defeq \bigl
\langle\hat\mu, 1-e^{-qx} \bigr\rangle
\end{eqnarray*}
as $t \to\infty$. Equivalently, in terms of the variables
%
\[
\tau\defeq\eta(t), \qquad\ell(\tau) \defeq\lambda\bigl(\zeta(\tau)\bigr),
\]
we have
%
\begin{equation}
\label{phihat} \tilde\varphi(\tau, q) \defeq\frac{\eta(\zeta(\tau)+\zeta
(q\ell(\tau
)))}{\tau} \to\hat\varphi(q)
\end{equation}
as $\tau\to0$.
Note that $\hat\varphi\colon[0,\infty) \to[0,\infty)$ is increasing
from $0$ to $1$, since $\hat\mu$ is a probability measure. Also, note
that $\tilde\varphi$ is implicitly determined by the relation
%
\begin{equation}
\label{phitilde} \zeta\bigl(q\ell(\tau)\bigr) = \zeta\bigl(\tau\tilde{\varphi}(
\tau, q)\bigr) - \zeta(\tau),
\end{equation}

\begin{claim*}
$\ell(\tau)\to0$ as $\tau\to0$.
\end{claim*}

\begin{pf}
Consider
%
\begin{equation}
F(\tau) \defeq\frac{\tau}{\Psi(\tau)}.
\end{equation}
By part (i) of Lemma~\ref{psilemma}, $F$ is nonincreasing, and
$\tau\mapsto\tau F(\tau)$ is nondecreasing, hence
%
\begin{equation}
\label{rescaledF} \qquad 1 \leq\frac{F(\tau u)}{F(\tau)} \leq\frac{1}{u}\qquad
\mbox{for $u
\leq1$},\qquad \frac{1}{u} \leq\frac{F(\tau u)}{F(\tau)} \leq1\qquad \mbox{for $u > 1$}.
\end{equation}
%
Therefore, equations (\ref{f:zeta}) and (\ref{phitilde}) imply
\[
\frac{\zeta(q\ell(\tau))}{F(\tau)} = \int_{\tilde\varphi(\tau, q)}^1
\frac{F(\tau v)}{F(\tau)} \frac{dv}{v} \geq\int_{\tilde\varphi
(\tau,
q)}^1
\frac{dv}{v}.
\]
As $\tau\to0$, the right-hand side is bounded away from zero for fixed
$q>0$. Also, $F(\tau) \to\infty$ since $\Psi(0^+) = 0$ and
$\Psi^{\prime}(0^+) = 0$. It follows that $\zeta(q\ell(\tau)) \to
\infty$ as $\tau\to0$. Since $\zeta$ is decreasing on $\rplus$ and
$\zeta(0^+) = \infty$, the claim follows.
\end{pf}

We now consider the rescaled equation
%
\begin{equation}
\label{phitilde2} \zeta_s\bigl(q\ell_s(\tau)\bigr) =
\zeta_s\bigl(\tau\tilde{\varphi}(s\tau, q)\bigr) - \zeta
_s(\tau),
\end{equation}
where
%
\begin{equation}
\zeta_s(\tau) \defeq\frac{\zeta(s\tau)}{\zeta(s)}, \qquad\ell_s(\tau)
\defeq\frac{\ell(s\tau)}{s}.
\end{equation}
We will show that as $s \to0$ a nontrivial limiting version of~\eqref
{phitilde2} holds.
That is,
%
\begin{equation}
\label{phihatlimeq} \hat{\zeta}\bigl(q \hat{\ell}(\tau)\bigr) = \hat{\zeta}\bigl(\tau
\hat{\varphi }(q)\bigr) - \hat{\zeta}(\tau)
\end{equation}
holds where
%
\begin{equation}
\label{hatlimits} \ell_s(\tau) \mathop{\xrightarrow}^{s \to0} \hat\ell(\tau) =
\hat\varphi ^{\prime
}(0)\tau,\qquad \zeta_s(\tau) \mathop{\xrightarrow}^{s \to0}
\hat\zeta(\tau) = \tau^{-r},
\end{equation}
for some $r \in(0,1]$. We will then show that the previous limits imply
that $\Psi$ is regularly varying at zero with index $\gamma= r + 1$ and
that $\hat\varphi$ has a generalized Mittag--Leffler form determined by
%
\begin{equation}
\label{hatvarphi} \hat{\varphi}(q)= \biggl[\frac{1}{
1+(\hat\varphi^{\prime}(0) q)^{-r}} \biggr]^{{1}/r}.
\end{equation}
The main idea is to show that subsequential limits of (\ref{phitilde2})
exist and are unique. We divide the proof into three main steps.

\textit{Step} 1. (\textit{Existence of subsequential limits.})

First, we write
%
\begin{equation}
\label{rescaledphi}\qquad \zeta_s(\tau) -1 = \int_1^\tau
\zeta_s^{\prime}(u) \,du 
= \int_1^{\tau}
\frac{s \zeta^{\prime}(su)}{\zeta(s)} \,du 
= \frac{-s\zeta^{\prime}(s)}{\zeta(s)} \int_{\tau}^1
\frac{\Psi(s)}{\Psi(su)} \,du.
\end{equation}
Note that for fixed $s>0$, the function
\[
\Psi_s(u) \defeq\frac{\Psi(su)}{\Psi(s)}
\]
is increasing and convex. Furthermore, by part (i) of Lemma~\ref
{psilemma}, we have
%
\begin{equation}
\label{rescaledpsi} \quad u^2 \leq\Psi_s(u) \leq u \qquad\mbox{for $u
\leq1$},\qquad u \leq\Psi_s(u) \leq u^2 \qquad\mbox{for $u > 1$}.
\end{equation}
On the other hand, by part (iii) of Lemma~\ref{psilemma},
for all $s >0$ we have
%
\begin{equation}
\label{beta} \xi(s) \defeq\frac{-s\zeta^{\prime}(s)}{\zeta(s)} \in(0,1].
\end{equation}

\begin{claim*}
$ \limsup_{s \to0} \xi(s) >0$.
\end{claim*}

\begin{pf}
Assume for the sake of contradiction that $\xi(s) \to0$ as $s \to0$.
Then, by (\ref{rescaledphi}) and (\ref{rescaledpsi}), $\zeta$ is slowly
varying at $u = 0$.
By Helly's selection theorem, there exists a sequence $\tau_j \to0$
and a function $1 \leq f(u) \leq1/u$ such that
\[
\frac{F(\tau_j u)}{F(\tau_j)} \to f(u)
\]
pointwise for $u\in(0,1)$. Since $\zeta$ is slowly varying and $\ell
(\tau_j) \to0$, we have
\[
\int_{\hat\varphi(q)}^1 \frac{f(u)}{u} \,du = \lim
_{j \to\infty} \frac
{\zeta(q\ell(\tau_j))}{F(\tau_j)} = \lim_{j \to\infty}
\frac
{\zeta
(q\ell(\tau_j))}{\zeta(\ell(\tau_j))} \cdot\frac{\zeta(\ell
(\tau
_j))}{F(\tau_j)} = \lim_{j \to\infty}
\frac{\zeta(\ell(\tau
_j))}{F(\tau_j)}.
\]
This gives a contradiction since $\hat\varphi$ is nonconstant and the
right-hand side is independent of $q$.
Therefore, the claim holds.
\end{pf}

Now we may apply Helly's selection theorem to find a sequence $s_k \to
0$ and a function $\hat{\Psi} > 0$ for which
%
\begin{eqnarray}
\xi(s_k) &\to&\hat{\xi}\qquad \mbox{for some } \hat{\xi} \in(0, 1],
\nonumber
\\[-8pt]
\\[-8pt]
\nonumber
\Psi_{s_k}(u) &\to&\hat{\Psi}(u)\qquad \mbox{for all } u> 0.
\end{eqnarray}
Furthermore, as a pointwise limit of convex functions, $\hat{\Psi}$
is convex.
By dominated convergence,
%
\begin{equation}
\label{Phihat} \zeta_{s_k}(\tau) \to\hat{\zeta}(\tau) \defeq1 + \hat{
\xi}\int_{\tau
}^1 \frac{1}{\hat{\Psi}(u)} \,du.
\end{equation}
Since $\hat\Psi$ is convex and positive,
$\hat{\zeta} \in C^1(\rplus)$ and is strictly decreasing,
and the convergence in \qref{Phihat} occurs locally uniformly for
$\tau\in\rplus$.
Now by assumption~(\ref{phihat}), the right-hand side of (\ref
{phitilde2}) converges to
%
\begin{equation}
\label{rhslimit} R(\tau, q) \defeq\hat{\zeta}\bigl(\tau\hat{\varphi}(q)\bigr) -
\hat{\zeta }(\tau ) >0
\end{equation}
for all $\tau, q >0$. Hence, the left-hand side of (\ref{phitilde2})
also converges, and if $\hat{\zeta}^{-1}$ is defined on $\rplus$, then
%
\begin{equation}
\label{lambdahat} \ell_{s_k}(\tau) \to\hat{\ell}(\tau) \defeq
q^{-1} \hat{\zeta }^{-1}\bigl(R(\tau, q)\bigr).
\end{equation}

\begin{claim*}
$\hat{\zeta}\colon\rplus\to\rplus$ is a
bijection.
\end{claim*}

\begin{pf}
Recall $\hat\zeta$ is strictly decreasing.
For $\tau\leq1$, estimate (\ref{rescaledpsi}) implies
\[
\hat{\zeta}(\tau) \geq1 + \hat{\xi}\int_{\tau}^1
\frac{1}{u} \,du = 1 - \hat{\xi} \ln\tau\to\infty\qquad\mbox{as $\tau\to0$}.
\]
%
It remains to
show that $\hat{\zeta}(\tau) \to0$ as $\tau\to\infty$. Assume, for
the sake of contradiction, that $\hat{\zeta}(\tau) \to L > 0$ as
$\tau
\to\infty$. Since $\hat{\varphi}(q) \to1$ as $q \to\infty$, we may
choose for any $\tau> 0$ a value $\hat q > 0$ sufficiently large so that
$R(\tau, \hat q) < L$. It follows
%
\begin{equation}
\label{lklim} \ell_{s_k}(\tau) \to\infty\qquad\mbox{as $k \to\infty$},
\end{equation}
for otherwise, along some bounded subsequence, the left-hand side of
(\ref{phitilde2}) would have a subsequential limit with value larger
than $L$, which is a contradiction.
But now (\ref{lklim}) implies that for all $q > 0$
\[
R(\tau, q) = \lim_{k \to\infty} \zeta_{s_k}\bigl(q
\ell_{s_k}(\tau)\bigr) \leq1,
\]
since $\zeta_s(u) \leq1$ for any $s >0$ and $u \geq1$. However, by
(\ref{rhslimit}), $R(\tau, q) \to\infty$ as $q \to0$, since $\hat
{\zeta}$ is unbounded above and $\hat{\varphi} \to0$ as $q \to0$.
This is a contradiction, which gives the claim.
\end{pf}

We conclude that, along the sequence $s_k \to0$, equation (\ref
{phitilde2}) has a well-defined limit of the form (\ref{phihatlimeq})
for all $\tau, q >0$. Furthermore, by (\ref{phitilde2}) and (\ref
{rescaledphi}) we have
\[
1+\xi(s_k) \int_{q \ell_{s_k}(\tau)}^1
\frac{1}{\Psi_{s_k}(u)} \,du = \xi (s_k) \int_{\tau\tilde\varphi(s_k \tau, q)}^\tau
\frac{1}{\Psi
_{s_k}(u)} \,du.
\]
In particular, fixing $q > 0$ and taking into account (\ref
{rescaledpsi}) shows that
%
\begin{equation}
\label{uniform} \ell_{s_k}(\tau) \to\hat{\ell}(\tau) \qquad\mbox{locally
uniformly for } \tau\in\rplus.
\end{equation}
This fact will play a role in the uniqueness proof to follow.

\textit{Step} 2. (\textit{Uniqueness of subsequential limits.})

We now show that subsequential limits obtained as in step 1 are unique.
First, equations (\ref{phihatlimeq}) and (\ref{Phihat}) imply
%
\begin{equation}
\label{limeq1a}\qquad \hat\zeta(\tau) = \hat\xi\int_{\tau\hat\varphi(q)}^{q
\hat\ell(\tau)}
\hat F(s) \frac{ds}{s} = \hat\xi\int_{\tau
{\hat\varphi(q)}/{q}}^{\hat\ell(\tau)}
\hat F(q s) \frac{ds}{s}, \qquad\hat{F}(u) \defeq\frac{u}{\hat{\Psi}(u)}.
\end{equation}
Further, since $\hat\varphi$ is concave, it follows that
\[
\hat\varphi^{\prime}(q) \leq\frac{\hat\varphi(q)}{q} \leq\frac
{\hat
\ell(\tau)}{\tau}\qquad
\mbox{for all } \tau, q > 0.
\]
Therefore, $\hat\varphi^{\prime}(q) = \int_E e^{-qx} x \hat\mu
(dx)$ is
decreasing and bounded above, and we deduce that
%
\[
0 < \hat\varphi^{\prime}_0 \defeq\hat\varphi^{\prime}
\bigl(0^+\bigr) < \infty.
\]
Furthermore, taking $q \to0$ in (\ref{limeq1a}) gives
%
\begin{equation}
\label{limeq1b} \hat\zeta(\tau) = \hat\xi\hat F_0 \ln
\frac{\hat\ell(\tau
)}{\hat\varphi
^{\prime}_0 \tau}\qquad\mbox{with } \hat F_0 \defeq\hat F\bigl(0^+
\bigr).
\end{equation}
%
Formally, we have $\hat F_0 = \infty$ if and only if $\hat\ell(\tau)
= \hat\varphi^{\prime}_0 \tau$ for all $\tau>0$. More precisely, note
that the left-hand side of (\ref{limeq1a}) is positive, so that $\hat
F(qs)$ has a finite limit as $q \to0$ if and only if $\hat
\ell(\tau) > \hat\varphi^{\prime}_0 \tau$ for each $\tau>0$.

On the other hand, equations (\ref{phihatlimeq}) and (\ref{Phihat})
also imply
%
\begin{equation}
\label{limeq2a} \hat\zeta\bigl(q \hat\ell(\tau)\bigr) = \hat\xi\int
_{\tau\hat\varphi
(q)}^{\tau} \hat F(s) \frac{ds}{s} = \hat\xi
\int_{\hat\varphi(q)}^{1} \hat F(\tau s) \frac{ds}{s}.
\end{equation}
Taking $\tau\to0$ implies
%
\begin{equation}
\label{limeq2b} \hat\zeta(q \hat\ell_0) = \hat\xi\hat
F_0 \ln \frac{1}{\hat\varphi(q)} \qquad\mbox{with } \hat\ell_0
\defeq\hat\ell\bigl(0^+\bigr).
\end{equation}
%
In particular, $\hat\ell_0 = 0$ if and only if $\hat F_0 = \infty$.
Note also $\hat\ell_0 < \infty$, since $\hat F_0 \geq1$.

We now consider two cases.

\textit{Case} 1: (\textit{$\hat\ell_0 = 0$}). As noted above, $\hat
\ell_0 =
0$ if and only if $\hat\ell(\tau) = \hat\varphi^{\prime}_0 \tau$ for
all $\tau>0$. Hence, (\ref{phihatlimeq}) reduces to
%
\begin{equation}
\label{phihatlimeq1} \hat{\zeta}\bigl(\tau q \hat\varphi^{\prime}_0
\bigr) = \hat{\zeta}\bigl(\tau \hat {\varphi}(q)\bigr) - \hat{\zeta}(\tau).
\end{equation}
Differentiating in $q$ and $\tau$ gives the relations
%
\begin{equation}
\label{relations} \frac{\hat{\varphi}(q)}{q \hat{\varphi}^{\prime}(q)} = \frac{\hat{F}(\tau\hat{\varphi}(q))}{\hat{F}(\tau q \hat\varphi
^{\prime
}_0)} = 1 +
\frac{\hat{F}(\tau)}{\hat{F}(\tau q \hat\varphi
^{\prime}_0)}.
\end{equation}
Therefore, $\hat{F}(\tau q \hat\varphi^{\prime}_0) / \hat F(\tau)$ is
constant in $\tau$ and we deduce that $\hat F$ is a power law:
$\hat F(u) = u^{-r}$, since $\hat F(1)=1$.
Note that $r \ne0$, since $\hat F_0 = \infty$. It then follows from~(\ref{rescaledpsi}) and (\ref{limeq1a}) that $0 <r \leq1$.
The second equality above reduces to
%
\begin{equation}
\label{powerlaw} \frac{(q \hat\varphi^{\prime}_0)^r}{\hat\varphi(q)^r} = 1 + \bigl(q \hat \varphi^{\prime}_0
\bigr)^r,
\end{equation}
which gives (\ref{hatvarphi}). On the other hand, $\hat\Psi(u) =
u^{r+1}$, so that (\ref{Phihat}) implies
\[
0 = \hat\zeta(\infty) = 1 - \hat\xi\int_1^\infty
\frac{1}{u^{r+1}} \,du = 1 - \frac{\hat\xi}{r}.
\]
Hence, $\hat\xi= r$. In summary, we obtain in this case
%
\begin{equation}
\label{hatlimits2} 
\hat\ell(\tau) = \hat\varphi^{\prime}_0
\tau,\qquad 
\hat\Psi(\tau) = \tau^{r+1},\qquad 
\hat\zeta(
\tau) = \tau^{-r},\qquad 
\hat\xi= r,
\end{equation}
where $0 < r \leq1$ and $\hat\varphi$ is given by (\ref{hatvarphi}).

\textit{Case} 2: (\textit{$\hat\ell_0 > 0$}). We will show that the
remaining case, $\hat\ell_0 > 0$, leads to a contradiction. We divide
this case into three parts.

(i) First, let us show that if $\hat\ell_0 > 0$, then $\hat
\varphi$ has the form (\ref{hatvarphi}), and
%
\begin{equation}
\label{eqhatlambda} \hat\ell(\tau)^r = \hat\ell_0^r
+\bigl(\tau\hat\varphi_0^{\prime}\bigr)^r.
\end{equation}
The idea is to consider a rescaling of (\ref{phihatlimeq}), of the same
form as (\ref{phitilde2}); namely,
%
\begin{equation}
\label{phihatlimeq2} \hat{\zeta}_s\bigl(q \hat{\ell}_s(\tau)
\bigr) = \hat{\zeta}_s\bigl(\tau\hat {\varphi }(q)\bigr) - \hat{
\zeta}_s(\tau),
\end{equation}
where
%
\begin{equation}
\hat\zeta_s(\tau) \defeq\frac{\hat\zeta(s\tau)}{\hat\zeta(s)}, \qquad\hat\ell_s(
\tau) \defeq\frac{\hat\ell(s\tau)}{s}.
\end{equation}
Since $\hat\zeta(\tau)\to0$ as $\tau\to0$, we deduce from~\eqref
{limeq1b} that
%
\begin{equation}
\label{lambdainfty} \hat\ell_s(\tau) = \tau\cdot\frac{\hat\ell(s\tau)}{s\tau} \to
\tau\hat\varphi^{\prime}_0 \qquad\mbox{as } s \to\infty.
\end{equation}
Furthermore, since Lemma~\ref{psilemma} applies to the functions $\hat
\Psi$ and $\hat\zeta$, we can use Helly's selection principle, as in
step 1, to
pass to the limit in (\ref{phihatlimeq2}) along some sequence $\hat
{s}_k \to\infty$.
Up to relabeling, the limit equation matches exactly the form (\ref
{phihatlimeq1}). In particular, (\ref{lambdainfty}) implies that $\hat
\zeta_{\hat{s}_k}$ has a nonconstant limit, and we obtain, as before,
(\ref{hatvarphi}) from the relations (\ref{relations}). Note that the
constant $r$ in (\ref{hatvarphi}) is the same as in the previous case,
since $\hat\varphi$ is fixed.

Now, substituting $q = \tau/ \hat\ell_0$ in (\ref{limeq2b}) and
comparing with (\ref{limeq1b}), we obtain
%
\begin{equation}
\frac{\hat\ell(\tau)}{\tau\hat\varphi^{\prime}_0 } = \frac
{1}{\hat
\varphi( \tau/ \hat\ell_0)}
\end{equation}
for all $\tau>0$. Using (\ref{hatvarphi}) in the previous relation
gives~\eqref{eqhatlambda}.
In particular,
%
\begin{equation}
\label{capdelta0} \frac{\hat\ell(\tau)}{\tau} = \biggl[ \frac{\hat\ell_0^r}{\tau^r} + \bigl(\hat
\varphi^{{\prime}}_0\bigr)^r \biggr]^{{1}/{r}}
\end{equation}
is decreasing as a function of $\tau> 0$.

(ii) Next, let us show that if~\eqref{capdelta0} holds, then
%
%
\begin{equation}
\label{supinflambda} \limsup_{\tau\to0} \frac{\ell(\tau)}{\tau} = \infty
\quad\mbox{and}\quad \liminf_{\tau\to0} \frac{\ell(\tau)}{\tau} = \hat
\varphi_0^{\prime}.
\end{equation}
Recall that $\hat\ell(\tau) = \lim_{k \to\infty} \ell_{s_k}(\tau)$
for some $s_k \to0$. Therefore,~\eqref{capdelta0} implies
\[
\lim_{k \to\infty} \frac{\ell(s_k \tau)}{s_k \tau} = \frac{\hat
\ell
(\tau)}{\tau} \to
\infty
\]
as $\tau\to0$, and the first statement in~\eqref{supinflambda} follows.

On the other hand, by~\eqref{phitilde}
%
\[
\frac{\ell(\tau)}{\tau} > \frac{\tilde\varphi(\tau,q)}{q}\qquad \mbox {for all} \tau, q>0.
\]
Hence, for all $t>0$ and for all $q>0$,
\[
\frac{\hat\ell(t)}{t} = \lim_{k \to\infty} \frac{\ell(s_k
t)}{s_k t} \geq
\liminf_{\tau\to0} \frac{\ell(\tau)}{\tau} > \liminf
_{\tau
\to
0} \frac{\tilde\varphi(\tau,q)}{q} = \frac{\hat\varphi(q)}{q}.
\]
Taking into account~\eqref{capdelta0} and passing to the limit $t \to
\infty$ on the left and $q \to0$ on the right yields the last
statement in~\eqref{supinflambda}.

(iii) Finally, we show that (\ref{capdelta0}) and (\ref
{supinflambda}) lead to a contradiction. Fix~$M > m > \hat\varphi
_0^{\prime}$, and choose a sequence of disjoint intervals $[a_k, b_k]$
as follows:
\begin{longlist}[(1)]
\item[(1)] Choose $b_k \to0$ such that $ \frac{\ell
(b_k)}{b_k} > M$.
\item[(2)] Define $ c_k = \sup\{\tau< b_k\dvtx\frac{\ell
(\tau)}{\tau} < m \}$.
\item[(3)] Choose $a_k$ such that $ 1 < \frac{c_k}{a_k} <
1 + \frac{1}{k}$ and $ \frac{\ell(a_k)}{a_k} < m$.
\end{longlist}
Taking $s = a_k$ and $\tau= 1$ in (\ref{phitilde2}), we have
%
\begin{equation}
\label{rescalebk} \zeta_{a_k}\bigl(q \ell_{a_k}(1)\bigr) =
\zeta_{a_k}\bigl(\tilde\varphi(a_k, q)\bigr) - 1.
\end{equation}
Since
\[
\hat\varphi_0^{\prime} \leq\liminf_{k \to\infty}
\frac{\ell
(a_k)}{a_k} \leq\limsup_{k \to\infty} \frac{\ell(a_k)}{a_k} \leq
m
\]
and
$\tilde\varphi(a_k, q) \to\hat\varphi(q)$ as $k \to\infty$, it
follows that the sequence $\xi(a_k)$, defined by (\ref{beta}), is
bounded away from zero; otherwise, there exists a subsequence $\zeta
_{a_{k_j}}(\tau) \to1$, which contradicts (\ref{rescalebk}).
Therefore, as in step 1, (\ref{phitilde2}) has a nontrivial limit along
a subsequence $a_{k_j} \to0$, $j \geq1$. In particular, the local
uniform convergence of $\ell_{a_{k_j}}$ implies
%
\begin{equation}
\label{uniform2} \frac{\ell_{a_{k_j}}(\tau)}{\tau} \to\frac{\hat\Lambda(\tau
)}{\tau} \qquad\mbox{locally
uniformly for } \tau\in\rplus,
\end{equation}
where $\hat\Lambda$ satisfies (\ref{eqhatlambda}), or, equivalently,
%
\begin{equation}
\label{capdelta} \frac{\hat\Lambda(\tau)}{\tau} = \biggl[ \frac{\hat\Lambda
_0^r}{\tau^r} + 
{
\bigl(\hat\varphi'_0\bigr)}^r
\biggr]^{{1}/{r}}.
\end{equation}
If $\hat\Lambda_0 > 0$, then~\eqref{capdelta} is strictly decreasing in
$\tau$, and we have, for all $\tau> 1$,
\[
\frac{\hat\Lambda(\tau)}{\tau} < \hat\Lambda(1) = \lim_{j \to
\infty}
\ell_{a_{k_j}}(1) \leq m.
\]
On the other hand, if $\hat\Lambda_0 = 0$, then $\hat\Lambda(\tau)
/\tau= \hat\Lambda(1) = \hat\varphi^{{\prime}}_0 < m$ for all
$\tau>
0$. Hence, in either case, we have $\hat\Lambda(\tau) /\tau< m$ is
nonincreasing for all $\tau> 1$.

Next, choose $\varepsilon< \min\{M-m, m - \frac{\hat\Lambda
(2)}{2}\}
$, and choose $J$ large enough so that
%
\begin{equation}
\label{uniform1} \biggl\llvert \frac{\ell_{a_{k_j}}(\tau)}{\tau} - \frac{\hat\Lambda
(\tau)}{\tau
} \biggr
\rrvert < \varepsilon\qquad\forall j \geq J, \forall\tau\in[1,3].
\end{equation}
Since $r_j \defeq{b_{k_j}}/{a_{k_j}} > 1$ and
\[
\biggl\llvert \frac{\ell_{a_{k_j}}(r_j)}{r_j} - \frac{\hat\Lambda(r_j)}{r_j} \biggr\rrvert = \biggl
\llvert \frac{\ell(b_{k_j})}{b_{k_j}} - \frac{\hat
\Lambda
(r_j)}{r_j} \biggr\rrvert \geq|M - m| >
\epsilon,
\]
it follows from~\eqref{uniform1} that $r_j > 3$ for all $j \geq J$. Therefore,
%
\begin{equation}
\label{bound23} \frac{\ell_{a_{k_j}}(\tau)}{\tau} \geq m \qquad\forall j \geq J, \forall
\tau\in[2, 3] \subset (c_{k_j}/a_{k_j},
r_j ],
\end{equation}
%
since, by construction, ${\ell(\tau)}/{\tau} \geq m$ for all $\tau
\in
(c_k, b_k]$. Hence,~\eqref{bound23} implies
%
\begin{equation}
\biggl\llvert \frac{\ell_{a_{k_j}}(\tau)}{\tau} - \frac{\hat\Lambda
(\tau)}{\tau
} \biggr\rrvert \geq\biggl
\llvert m - \frac{\hat\Lambda(\tau)}{\tau} \biggr\rrvert \geq \biggl\llvert m -
\frac{\hat\Lambda(2)}{2} \biggr\rrvert > \varepsilon
\end{equation}
for all $j \geq J$ and for all $\tau\in[2, 3]$. This
contradicts~\eqref{uniform1}. Therefore, the hypothesis of case 2,
$\hat\ell_0 > 0$, is never satisfied, and we obtain in step 1 unique
subsequential limits of the form~\eqref{hatlimits2}.

\textit{Step} 3. (\textit{Limit as $s \to0$.})

To finish the proof of the theorem, note that we must have $\xi(s) \to
\hat\xi= r$ as $s \to0$. Otherwise, by step 1, it is possible to
extract subsequential limits with distinct values of $\hat\xi$,
contradicting (\ref{hatlimits2}).
Similarly, the full limit of each of the rescaled functions
$\zeta_s$, $\Psi_s$, and $\ell_s$ exists as $s \to0$,
since given any sequence $s_k \to0$, there exist unique subsequential
limits by steps 1 and 2. In particular, (\ref{hatlimits2}) shows that
$\Psi$ is regularly varying with index $\gamma= r+1 \in(1,2]$. Also,
(\ref{powerlaw}) implies
\[
\bigl\langle\hat\mu, 1-e^{-qx} \bigr\rangle= \hat\varphi(q) =
\frac{1}{ [1+(\hat\varphi^{\prime}(0) q)^{-r} ]^{1/r} },
\]
where $\hat\varphi^{\prime}(0) = \langle x \hat\mu, 1 \rangle$. This
gives~\eqref{e:selfsim2}. Finally,~\eqref{hatlimits} implies that
\[
\frac{\lambda(\zeta(s\tau))}{\eta(\zeta(s\tau))}\tau= \frac
{\lambda
(\zeta(s\tau))}{s} = \ell_s(\tau) \to\hat
\varphi^{\prime
}(0)\tau.
\]
%
Hence $\lambda(t) \sim\hat\varphi^{\prime}(0) \eta(t) = \langle x
\hat
\mu, 1 \rangle$ $\langle\mu_t,1 \rangle$ as $t\to\infty$, and the
proof is complete.
\end{pf*}

\begin{remark}
The conclusions of the theorem follow much more quickly
if one \textit{assumes} that the scaling function
$\lambda(t)\sim\langle\mu_t,1 \rangle$ in \eqref{limhatmu},
based on the arguments of Pakes~\cite{Pakes} which make use
of the forward equation \eqref{e:fwd}. Testing \eqref{limhatmu}
with $xe^{-qx}$ it follows
%
\begin{equation}
\partial_q\Phi(t,\lambda q) = \frac{\Psi(\Phi(t,\lambda q))}{\Psi
(\lambda q)} \to\hat
\varphi'(q),\qquad q>0.
\end{equation}
Writing $u=\lambda q$ and noting $\theta= \hat\varphi(q)/q$ is a monotonic
function of $q$, we have that $\Phi(t,\lambda q)= u\theta(1+o(1))$
and thus
%
\begin{equation}
\frac{\Psi(u\theta(1+o(1)))}{\Psi(u)} \to h(\theta)
\end{equation}
as $u\to0$. By simple estimates based on the
continuity and monotonicity of $\Psi$, one can eliminate the $1+o(1)$
factor and conclude that $\Psi$ is regularly varying
by the standard rigidity lemma in \cite{Feller}, Lemma~VIII.8.2.
\end{remark}


\section{Limit theorems for critical CSBPs}\label{s:limCSBP}

We conclude this paper by applying the results in Sections~\ref
{s:main1} and \ref{s:main2} to derive limit theorems for critical CSBPs
that become extinct almost surely.
First, we obtain a conditional limit theorem for fixed initial
population $x$.
In particular this solves the continuous-state analog of the open
question posed by Pakes in \cite{Pakes}, Remark~6.1.

\begin{theorem}\label{limCSBP}
Assume $Z(t,x)$ is a continuous-state branching process with critical
branching mechanism $\Psi$ verifying Grey's condition. Further, assume
that for some (equivalently all) $x>0$,
there exists a function $\lambda> 0$ and a probability measure $\hat
\mu\in\mathcal{M}_F$ such that
%
\begin{equation}
\label{limcdf} \PP\bigl(\lambda(t)Z(t,x) \leq z | Z(t,x) > 0\bigr) \to\int
_{(0,z)}\hat\mu(du)
\end{equation}
%
holds for all points $z$ for which $\hat\mu(\{z\}) = 0$.
Then, there exists $1<\gamma\leq2$ such that $\Psi$ is regularly
varying at $u=0$ with index $\gamma$. Furthermore, $x \hat\mu\in
\mathcal{M}_F$ and $\lambda(t) \sim\langle x \hat\mu,1 \rangle\PP
(Z(t,1)>0)$ as $t \to\infty$.

Conversely, suppose $\Psi$ is regularly varying at $u=0$ with index $1
< \gamma\leq2$. Then,~\eqref{limcdf} holds with $\lambda(t) = \PP
(Z(t,x)>0)$ and $\hat\mu= F_{\gamma, 1}(dz)$.
\end{theorem}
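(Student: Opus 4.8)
The plan is to deduce Theorem~\ref{limCSBP} from Theorem~\ref{funthm} (forward direction) and from part~(ii) of Theorem~\ref{main1} (converse), the bridge being the elementary identity relating the law of $Z(t,x)$ conditioned on non-extinction to the normalized fundamental solution $\mu_t/\langle\mu_t,1\rangle$. Recall from Section~\ref{s:CSBP} and Definition~\ref{funsoln} that $\EE(e^{-qZ(t,x)})=e^{-x\Phi(t,q)}$ where $\Phi(t,q)=\langle\mu_t,1-e^{-qx}\rangle$, $\mu_t$ the fundamental solution of~\eqref{e:gsmain}; since $\Psi$ is critical and satisfies Grey's condition, Proposition~\ref{cmdphi}(ii) gives that $\mu_t$ is a finite measure for every $t>0$ and that $\eta(t)\defeq\Phi(t,\infty)=\langle\mu_t,1\rangle$ solves $\eta'=-\Psi(\eta)$, hence decreases to $0$ as $t\to\infty$. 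The first step is the elementary computation that, for $0<q<\infty$,
\[
\EE\bigl(1-e^{-q\lambda(t)Z(t,x)}\bigm| Z(t,x)>0\bigr)
=\frac{1-e^{-x\Phi(t,q\lambda(t))}}{1-e^{-x\Phi(t,\infty)}},
\qquad \PP(Z(t,x)>0)=1-e^{-x\eta(t)}.
\]
Since $0\le\Phi(t,q\lambda(t))\le\eta(t)\to0$, linearizing numerator and denominator shows this quantity equals $\Phi(t,q\lambda(t))/\eta(t)+o(1)$, with the $o(1)$ uniform for $x$ in compact sets; in particular its limit, if it exists, is independent of $x$, which already accounts for the ``for some (equivalently all) $x$'' in the statement.

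For the second step, observe $\Phi(t,q\lambda(t))/\eta(t)=\langle\,\mu_t(\lambda(t)^{-1}dx)/\langle\mu_t,1\rangle,\ 1-e^{-qx}\,\rangle$. By the continuity theorem --- and since total masses are automatically preserved, both the conditional law and $\hat\mu$ being probability measures --- hypothesis~\eqref{limcdf} is equivalent to convergence of $\Phi(t,q\lambda(t))/\eta(t)$ for every $q>0$, that is, to hypothesis~\eqref{limhatmu} of Theorem~\ref{funthm} for the fundamental solution, with the same $\lambda$ and the same limit $\hat\mu$. Theorem~\ref{funthm} then applies verbatim: $\Psi$ is regularly varying at $u=0$ with index $\gamma\in(1,2]$, $x\hat\mu\in\mathcal{M}_F$, $\lambda(t)/\langle\mu_t,1\rangle\to\langle x\hat\mu,1\rangle$, and $\hat\mu=F_{\gamma,1}(\langle x\hat\mu,1\rangle^{-1}dx)$. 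Since $\PP(Z(t,1)>0)=1-e^{-\langle\mu_t,1\rangle}\sim\langle\mu_t,1\rangle$ as $t\to\infty$, the scaling relation reads $\lambda(t)\sim\langle x\hat\mu,1\rangle\,\PP(Z(t,1)>0)$; this is the forward implication.

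For the converse, suppose $\Psi$ is a critical branching mechanism, regularly varying at $u=0$ with index $\gamma\in(1,2]$, satisfying Grey's condition. The key observation is that the fundamental solution automatically satisfies the tail hypothesis of Theorem~\ref{main1}(ii) with $\rho=1$: by mass conservation, $\langle x\mu_t,1\rangle=\partial_q\Phi(t,0^+)=1$ for every $t$ (Remark~\ref{massdelta}), hence $\int_0^x y\,\mu_t(dy)\to1$ as $x\to\infty$, so~\eqref{tails} holds with $\rho=1$ at every $t_0>0$. Applying Theorem~\ref{main1}(ii) with $\alpha(t)=\langle\mu_t,1\rangle^{-1}$ yields a scaling $\lambda(t)\to0$ with $\langle\mu_t,1\rangle^{-1}\mu_t(\lambda(t)^{-1}dx)\xrightarrow{w}F_{\gamma,1}$; running the identities of the first two steps in reverse then gives~\eqref{limcdf} for every $x>0$ with limit $F_{\gamma,1}$, and $\lambda(t)\sim\langle\mu_t,1\rangle\sim\PP(Z(t,1)>0)$.

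With Theorems~\ref{funthm} and~\ref{main1} in hand, the remainder is essentially bookkeeping, and the only places requiring care are: the legitimacy and $x$-uniformity of linearizing the ratio of the two $1-e^{-x\Phi(\cdot)}$ factors in Step~1 (this is what makes the ``some $\iff$ all $x$'' assertion genuinely cost-free); the treatment of the $q=\infty$, total-mass case in Step~2 so that vague convergence upgrades to convergence in distribution with no mass lost at $0$ or $\infty$; and the verification --- via mass conservation --- that the fundamental solution meets the hypothesis of Theorem~\ref{main1}(ii), which is exactly what allows the converse to go through immediately with $\rho=1$.
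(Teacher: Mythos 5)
Your proof is correct and follows essentially the same route as the paper: convert the conditional law to a Laplace-exponent ratio, linearize the two $1-e^{-x\Phi}$ factors to see the ratio is $\Phi(t,q\lambda(t))/\eta(t)+o(1)$ (from which the $x$-independence and hence the ``some $\iff$ all $x$'' drop out), and then invoke Theorem~\ref{funthm} for the forward direction and Theorem~\ref{main1}(ii) for the converse. Your extra observation for the converse --- that mass conservation ($\partial_q\Phi(t,0^+)=1$, hence $\langle x\mu_t,1\rangle=1$) gives the $\rho=1$ tail hypothesis of Theorem~\ref{main1}(ii) for free --- is exactly the content of the paper's terse ``follows easily,'' and is worth having spelled out.
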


\begin{pf}
It follows from~\eqref{e:laplace1} that
%
\begin{equation}
\label{probextinct} \PP\bigl(Z(t,x) = 0\bigr) 
=
\lim_{q \to\infty} \EE\bigl(e^{-qZ(t,x)}\bigr) = e^{-x\varphi(t,\infty)} =
e^{-x\langle\mu_t, 1 \rangle},
\end{equation}
%
with $\mu_t$ the L\'{e}vy measure for $Z(t,x)$.
By the continuity theorem~\cite{Feller}, Theorem~XIII.1.2, \eqref{limcdf} implies
\begin{eqnarray*}
\int_{\rplus}e^{-qy}\hat\mu(dy) 
&= &\lim_{t \to\infty}
\frac{\mathbb E( e^{-q \lambda(t) Z(t, x)}) -
\PP
(Z(t,x) = 0)}{\PP(Z(t, x) > 0)}
\\
&=& \lim_{t \to\infty} \frac{e^{-x\varphi(t,\lambda(t) q)} -
e^{-x\langle\mu_t, 1 \rangle}}{1-e^{-x\langle\mu_t, 1 \rangle}}.
\end{eqnarray*}
%
Hence,
\begin{eqnarray*}
\lim_{t\to\infty} \int_{\rplus}
\bigl(1-e^{-qy}\bigr)\frac{\mu_t(\lambda
(t)^{-1}\,dy)}{\langle\mu_t, 1 \rangle}& =& \lim_{t\to\infty}
\frac{\varphi(t,\lambda(t)q)}{\langle\mu_t, 1
\rangle}
\\
&=& \lim_{t\to\infty} \frac{1-e^{-x\varphi(t,\lambda(t)
q)}}{1-e^{-x\langle\mu_t, 1 \rangle}} = \int_{\rplus
}
\bigl(1-e^{-qy}\bigr)\hat \mu(dy),
\end{eqnarray*}
where the second equality follows by Taylor expansion and the fact that
$0<\varphi(t,\lambda(t)q)<\langle\mu(t),1\rangle\to0$ as $t\to
\infty$.
Since $\mu_t$ is the fundamental solution of the associated
equation~\eqref{e:gsmain}, we conclude, by Theorem~\ref{funthm}, that
there exists $1<\gamma\leq2$ such that $\Psi$ is regularly varying at
$u=0$ with index~$\gamma$. Also, by Theorem~\ref{funthm},
\[
\lambda(t) \sim\langle x \hat\mu,1 \rangle \langle\mu_t, 1 \rangle
\sim\langle x \hat\mu,1 \rangle\bigl(1-e^{-\langle\mu_t, 1 \rangle}\bigr) = \langle x \hat
\mu,1 \rangle\PP\bigl(Z(t,1)>0\bigr)
\]
as $t \to\infty$. The converse follows easily from Theorem~\ref
{main1}. This completes the proof.
\end{pf}

Next, based on the same results on scaling limits of fundamental solutions,
we study scaling limits as $t\to\infty$
of CSBPs with initial population scaled
to obtain nondegenerate L\'evy process limits $x\mapsto\hat Z(x)$.
As in~\cite{Jacod}, Chapter VI, let $\Skor$ denote the space of
\cadlag
paths equipped with the Skorokhod topology.
We use the notation $\laweq$ to denote equality in law (i.e., both
processes define the same measure on the Skorokhod space $\Skor$), and
the notation $\mathop{\rightarrow}\limits^ \mathcal{L}$ to denote convergence in law for these processes
(i.e., weak convergence of the induced distributions on the Skorokhod space).

For convenience, we introduce a notation for rescaled processes.
If $\lambda, \alpha> 0$, and $x\mapsto X(x)$ is a process, then we
define the rescaled process $\delta_{\lambda, \alpha}X$ by $\delta
_{\lambda, \alpha} X(x) \defeq\lambda X( \alpha x )$.

\begin{theorem}\label{ssCSBP}
Let $Z(t,x)$ be a continuous state branching process with critical
branching mechanism $\Psi$ satisfying Grey's condition.
\begin{longlist}[(i)]
\item[(i)]
Assume there exists a L\'{e}vy process $\hat Z = \hat Z (x)$ and
functions $\alpha, \lambda> 0$ such that
%
\begin{equation}
\label{processlimit} \delta_{\lambda(t), \alpha(t)} Z(t, \cdot)
\mathop{\rightarrow}^ \mathcal{L}_{t \to\infty} \hat Z(
\cdot).
\end{equation}
Further, assume the nondegeneracy condition
%
\begin{equation}
\label{ndlimit} \lim_{t \to\infty} \PP\bigl(Z\bigl(t, \alpha(t)x\bigr)
= 0\bigr) = \PP\bigl(\hat Z(x) = 0\bigr) \in(0,1) 
\end{equation}
for some $x > 0$. Then, there exists $1 < \gamma\leq2$ such that
$\Psi
$ is regularly varying at $u=0$ with index $\gamma$, and there exist
constants $c_\alpha, c_\lambda>0$ such that
%
\begin{equation}
\label{limsim} \frac{c_\alpha}{\alpha(t)} \sim\frac{\lambda(t)}{c_\lambda} \sim \PP \bigl(Z(t,1)
> 0\bigr) \qquad\mbox{as } t\to\infty.
\end{equation}
Furthermore, for all (fixed) $t > 0$
%
\begin{equation}
\label{ssprocess} 
\delta_{\lambda(s), \alpha(s)} Z(st, \cdot)
\mathop{\rightarrow}^ \mathcal{L}_{s \to\infty} \delta _{t^{\gamma^*}, t^{-\gamma^*}} \hat Z \qquad\mbox{where } \gamma^* \defeq
\frac{1}{\gamma- 1}.
\end{equation}
%
Also, for all $t > 0$,
%
\begin{equation}
\label{ssprocess1} 
\delta_{1,c_\alpha c_\lambda}
Z_{\beta, \gamma}(t, \cdot) \laweq \delta _{t^{\gamma^*}, t^{-\gamma^*}} \hat Z,
\end{equation}
where $Z_{\beta, \gamma}(t, \cdot)$ is the continuous-state branching
process with branching mechanism $\hat\Psi(u) = \beta u^{\gamma}$ with
$\beta= \gamma^*{c_\lambda^{\gamma- 1}}$.



%
\item[(ii)]
Conversely, assume $\Psi$ is regularly varying at zero with index $1 <
\gamma\leq2$. Then~\eqref{ssprocess} holds
with $\lambda(s) = \alpha(s)^{-1} = \PP(Z(s,1) > 0)$, where $\hat Z(x)$
is the L\'{e}vy process with L\'{e}vy measure $F_{\gamma, 1}(dx)$
defined by~\eqref{dlaplace2}.
\end{longlist}
\end{theorem}

\begin{pf}
Since we are dealing with increasing L\'evy processes, the process convergence
in \qref{processlimit} is equivalent to the pointwise convergence of Laplace
exponents
%
\begin{equation}
\label{scalelim1a} \alpha(t)\vp\bigl(t,\lambda(t)q\bigr)
\mathop{\xrightarrow}_{t \to\infty}
\hat \varphi (q)\qquad \mbox{for all } q \in[0, \infty),
\end{equation}
where $x \hat\varphi(q) \defeq-\ln\mathbb E(e^{-q \hat Z(x)})$ is the
Laplace exponent of $\hat Z(x)$.
(See, e.g.~\cite{Jacod}, Corollary VII.4.43 and \cite{Feller},
Theorems XV.3.2 and XIII.1.2, as in the proof'' the proof of
Theorem~1 in~\cite{MP07}.)
%
%
%
Furthermore, by~\eqref{ndlimit} we must have
\begin{eqnarray*}
\lim_{t \to\infty} \lim_{q \to\infty} x \alpha(t) \varphi
\bigl(t, \lambda (t) q\bigr) &=& \lim_{t \to\infty} \lim
_{q \to\infty} - \ln\EE\bigl(e^{-q
Z^{(t)}(x)}\bigr)
\\
&=& \lim_{t \to\infty} -\ln\PP\bigl(\lambda(t) Z\bigl(t, \alpha(t)x
\bigr) = 0\bigr)\\
& =& -\ln\PP\bigl(\hat Z(x) = 0\bigr) = \lim_{q \to\infty}
x \hat\varphi(q),
\end{eqnarray*}
and hence,
%
\begin{equation}
\label{scalelim2} \lim_{t \to\infty} \alpha(t)\varphi(t, \infty) = \hat
\varphi (\infty) \in(0, \infty).
\end{equation}
Then, denoting the L\'{e}vy measures of $Z(t,x)$ and $\hat Z(x)$ by
$\mu
_t$ and $\hat\mu$, respectively, we deduce from~\eqref{scalelim1a}
and~\eqref{scalelim2} that
%
\begin{equation}
\frac{1}{\langle\mu_t, 1 \rangle} \mu_t\bigl(\lambda(t)^{-1} \,dx\bigr)
\mathop{\xrightarrow}^{w} \frac{1}{\langle\hat\mu, 1 \rangle} \hat\mu(dx).
\end{equation}
Therefore, by Theorem~\ref{funthm}, there exists $1 < \gamma\leq2$
such that $\Psi$ is regularly varying at $u=0$ with index $\gamma$. Moreover,
\[
\lambda(t) \sim\frac{\langle x \hat\mu,1 \rangle}{\langle\hat
\mu,1
\rangle} \langle\mu_t, 1 \rangle\sim
\frac{\langle x \hat\mu,1
\rangle}{\langle\hat\mu,1 \rangle} \bigl(1-e^{-\langle\mu_t, 1
\rangle}\bigr) = \frac{\langle x \hat\mu,1 \rangle}{\langle\hat\mu,1 \rangle} \PP
\bigl(Z(t,1)>0\bigr).
\]
Hence, together with~\eqref{scalelim2}, we obtain~\eqref{limsim} with
$c_\alpha= \langle\hat\mu, 1 \rangle$ and $c_\lambda= \frac
{\langle
x \hat\mu,1 \rangle}{\langle\hat\mu,1 \rangle}$.
Also, by Theorem~\ref{funthm},
\[
\hat\mu(dx) = c_\alpha F_{\gamma, 1} \bigl(c_\lambda^{-1}
\,dx\bigr) =c_\alpha c_\lambda \bigl[c_\lambda^{-1}
F_{\gamma, 1} \bigl(c_\lambda^{-1} \,dx\bigr) \bigr]
=c_\alpha c_\lambda\mu^{\beta, \gamma, 1}_1(dx),
\]
where $\mu^{\beta, \gamma, 1}$ is defined by~\eqref{ssfamily} with
$\beta= \frac{c_\lambda^{\gamma- 1}}{\gamma- 1}$. Therefore, by
Theorem~\ref{main1}, we have
%
\begin{equation}
\label{ssprocess2} \alpha(s)\varphi\bigl(st, \lambda(s)q\bigr) \to t^{-\gamma^*}
\hat\varphi \bigl(t^{\gamma^*}q\bigr) = c_\alpha c_\lambda
\varphi^{\beta,
\gamma,
1}(t, q)
\end{equation}
as $s \to\infty$ for all $0 \leq q \leq\infty$, where $\varphi
^{\beta, \gamma, 1}$ is defined by~\eqref{ssfamily2}.
Since
\begin{eqnarray*}
\EE\bigl(e^{-q\lambda(s)Z(st, \alpha(s) x)}\bigr) &=& e^{-x\alpha(s)\varphi(st,
\lambda(s)q)}
\\
&\mathop{\xrightarrow}\limits^{s \to\infty}& e^{-x t^{-\gamma^*}
\hat\varphi(t^{\gamma^*}q) } = \EE\bigl(e^{-qt^{\gamma^*} \hat
Z(t^{-\gamma^*}x)}\bigr),
\end{eqnarray*}
we obtain~\eqref{ssprocess}. Similarly, we obtain~\eqref{ssprocess1}
from~\eqref{ssprocess2}.

For the converse, we recall that the convergence in \eqref{processlimit} holds if and only
if the Laplace exponent converges pointwise as in \eqref{scalelim1a}. The converse
then follows by a similar argument.
\end{pf}

\begin{remark} The nondegeneracy condition \eqref{ndlimit} has the following
interpretation. The spatial process $x\mapsto Z^{\alpha, \lambda
}_t(x) $
is a compound Poisson process with jump measure $\alpha(t)\mu
_t(\lambda
(t)^{-1}\,dx)$
and scaled intensity $\alpha(t)\langle\mu_t,1 \rangle$.
One thing that \eqref{ndlimit} means is that we assume the scaled
intensity converges
to the intensity of jumps $\langle\hat\mu,1 \rangle=1$ in the
limiting process
$\hat Z$.
In particular this presumes there are no small jumps with finite intensity
being lost in the limit.
\end{remark}


%



\printaddresses


\begin{thebibliography}{24}

\bibitem{Bauer}
%
\begin{bbook}[mr]
\bauthor{\bsnm{Bauer},~\bfnm{Heinz}\binits{H.}}
(\byear{2001}).
\btitle{Measure and Integration Theory}.
\bseries{de Gruyter Studies in Mathematics}
\bvolume{26}.
\bpublisher{de Gruyter},
\blocation{Berlin}.
\bid{doi={10.1515/9783110866209}, mr={1897176}}
\end{bbook}
%
\bptok{imsref}%
\endbibitem

\bibitem{Be02}
%
\begin{barticle}[mr]
\bauthor{\bsnm{Bertoin},~\bfnm{Jean}\binits{J.}}
(\byear{2002}).
\btitle{Eternal solutions to {S}moluchowski's coagulation equation
with additive kernel and their probabilistic interpretations}.
\bjournal{Ann. Appl. Probab.}
\bvolume{12}
\bpages{547--564}.
\bid{doi={10.1214/aoap/1026915615}, issn={1050-5164}, mr={1910639}}
\end{barticle}
%
\bptok{imsref}%
\endbibitem

\bibitem{BLG06}
%
\begin{barticle}[mr]
\bauthor{\bsnm{Bertoin},~\bfnm{Jean}\binits{J.}} \AND
\bauthor{\bsnm{Le Gall},~\bfnm{Jean-Francois}\binits{J.-F.}}
(\byear{2006}).
\btitle{Stochastic flows associated to coalescent processes.~{III}.
{L}imit theorems}.
\bjournal{Illinois J. Math.}
\bvolume{50}
\bpages{147--181 (electronic)}.
\bid{issn={0019-2082}, mr={2247827}}
\end{barticle}
%
\bptok{imsref}%
\endbibitem

\bibitem{Bingham}
%
\begin{bbook}[mr]
\bauthor{\bsnm{Bingham},~\bfnm{N.~H.}\binits{N.~H.}},
\bauthor{\bsnm{Goldie},~\bfnm{C.~M.}\binits{C.~M.}} \AND
\bauthor{\bsnm{Teugels},~\bfnm{J.~L.}\binits{J.~L.}}
(\byear{1989}).
\btitle{Regular Variation}.
\bseries{Encyclopedia of Mathematics and Its Applications}
\bvolume{27}.
\bpublisher{Cambridge Univ. Press},
\blocation{Cambridge}.
\bid{mr={1015093}}
\end{bbook}
%
\bptok{imsref}%
\endbibitem

\bibitem{Borovkov}
%
\begin{barticle}[mr]
\bauthor{\bsnm{Borovkov},~\bfnm{K.~A.}\binits{K.~A.}}
(\byear{1988}).
\btitle{A method for the proof of limit theorems for branching processes}.
\bjournal{Teor. Veroyatn. Primen.}
\bvolume{33}
\bpages{115--123}.
\bid{doi={10.1137/1133010}, issn={0040-361X}, mr={0939993}}
\end{barticle}
%
\bptok{imsref}%
\endbibitem

\bibitem{BK1}
%
\begin{barticle}[mr]
\bauthor{\bsnm{Bricmont},~\bfnm{J.}\binits{J.}},
\bauthor{\bsnm{Kupiainen},~\bfnm{A.}\binits{A.}} \AND
\bauthor{\bsnm{Lin},~\bfnm{G.}\binits{G.}}
(\byear{1994}).
\btitle{Renormalization group and asymptotics of solutions of
nonlinear parabolic equations}.
\bjournal{Comm. Pure Appl. Math.}
\bvolume{47}
\bpages{893--922}.
\bid{doi={10.1002/cpa.3160470606}, issn={0010-3640}, mr={1280993}}
\end{barticle}
%
\bptok{imsref}%
\endbibitem


\bibitem{DLG02}
%
\begin{barticle}[mr]
\bauthor{\bsnm{Duquesne},~\bfnm{Thomas}\binits{T.}} \AND
\bauthor{\bsnm{Le Gall},~\bfnm{Jean-Fran{\c{c}}ois}\binits{J.-F.}}
(\byear{2002}).
\btitle{Random trees, {L}\'evy processes and spatial branching processes}.
\bjournal{Ast\'erisque}
\bvolume{281}
\bpages{vi+147}.
\bid{issn={0303-1179}, mr={1954248}}
\end{barticle}
%
\bptok{imsref}%
\endbibitem

\bibitem{Feller}
%
\begin{bbook}[mr]
\bauthor{\bsnm{Feller},~\bfnm{William}\binits{W.}}
(\byear{1971}).
\btitle{An Introduction to Probability Theory and Its Applications.
{V}ol. {II}},
\bedition{2nd}~ed.
\bpublisher{Wiley},
\blocation{New York}.
\bid{mr={0270403}}
\end{bbook}
%
\bptok{imsref}%
\endbibitem

\bibitem{Grey}
%
\begin{barticle}[mr]
\bauthor{\bsnm{Grey},~\bfnm{D.~R.}\binits{D.~R.}}
(\byear{1974}).
\btitle{Asymptotic behaviour of continuous time, continuous
state-space branching processes}.
\bjournal{J. Appl. Probab.}
\bvolume{11}
\bpages{669--677}.
\bid{issn={0021-9002}, mr={0408016}}
\end{barticle}
%
\bptok{imsref}%
\endbibitem

\bibitem{Jacod}
%
\begin{bbook}[mr]
\bauthor{\bsnm{Jacod},~\bfnm{Jean}\binits{J.}} \AND
\bauthor{\bsnm{Shiryaev},~\bfnm{Albert~N.}\binits{A.~N.}}
(\byear{2003}).
\btitle{Limit Theorems for Stochastic Processes},
\bedition{2nd} ed.
\bseries{Grundlehren der Mathematischen Wissenschaften}
\bvolume{288}.
\bpublisher{Springer},
\blocation{Berlin}.
\bid{mr={1943877}}
\end{bbook}
%
\bptok{imsref}%
\endbibitem

\bibitem{Kyprianou}
%
\begin{bbook}[mr]
\bauthor{\bsnm{Kyprianou},~\bfnm{Andreas~E.}\binits{A.~E.}}
(\byear{2006}).
\btitle{Introductory Lectures on Fluctuations of {L}\'evy Processes
with Applications}.
\bpublisher{Springer},
\blocation{Berlin}.
\bid{mr={2250061}}
\end{bbook}
%
\bptok{imsref}%
\endbibitem

\bibitem{Kyprianou2}
%
\begin{barticle}[mr]
\bauthor{\bsnm{Kyprianou},~\bfnm{A.~E.}\binits{A.~E.}} \AND
\bauthor{\bsnm{Pardo},~\bfnm{J.~C.}\binits{J.~C.}}
(\byear{2008}).
\btitle{Continuous-state branching processes and self-similarity}.
\bjournal{J. Appl. Probab.}
\bvolume{45}
\bpages{1140--1160}.
\bid{doi={10.1239/jap/1231340239}, issn={0021-9002}, mr={2484167}}
\end{barticle}
%
\bptok{imsref}%
\endbibitem

\bibitem{Lambert}
%
\begin{barticle}[mr]
\bauthor{\bsnm{Lambert},~\bfnm{Amaury}\binits{A.}}
(\byear{2007}).
\btitle{Quasi-stationary distributions and the continuous-state
branching process conditioned to be never extinct}.
\bjournal{Electron. J. Probab.}
\bvolume{12}
\bpages{420--446}.
\bid{doi={10.1214/EJP.v12-402}, issn={1083-6489}, mr={2299923}}
\end{barticle}
%
\bptok{imsref}%
\endbibitem

\bibitem{Ley03}
%
\begin{barticle}[author]
\bauthor{\bsnm{Leyvraz},~\bfnm{F.}\binits{F.}}
(\byear{{2003}}).
\btitle{{Scaling theory and exactly solved models in the kinetics of
irreversible aggregation}}.
\bjournal{{Phys. Reports}}
\bvolume{{383}}
\bpages{{95--212}}.
\end{barticle}
%
\bptok{imsref}%
\endbibitem

\bibitem{Li00}
%
\begin{barticle}[mr]
\bauthor{\bsnm{Li},~\bfnm{Zeng-Hu}\binits{Z.-H.}}
(\byear{2000}).
\btitle{Asymptotic behaviour of continuous time and state branching processes}.
\bjournal{J.~Austral. Math. Soc. Ser. A}
\bvolume{68}
\bpages{68--84}.
\bid{issn={0263-6115}, mr={1727226}}
\end{barticle}
%
\bptok{imsref}%
\endbibitem

\bibitem{MP04}
%
\begin{barticle}[mr]
\bauthor{\bsnm{Menon},~\bfnm{Govind}\binits{G.}} \AND
\bauthor{\bsnm{Pego},~\bfnm{Robert~L.}\binits{R.~L.}}
(\byear{2004}).
\btitle{Approach to self-similarity in {S}moluchowski's coagulation equations}.
\bjournal{Comm. Pure Appl. Math.}
\bvolume{57}
\bpages{1197--1232}.
\bid{doi={10.1002/cpa.3048}, issn={0010-3640}, mr={2059679}}
\end{barticle}
%
\bptok{imsref}%
\endbibitem

\bibitem{MP07}
%
\begin{barticle}[mr]
\bauthor{\bsnm{Menon},~\bfnm{Govind}\binits{G.}} \AND
\bauthor{\bsnm{Pego},~\bfnm{Robert~L.}\binits{R.~L.}}
(\byear{2007}).
\btitle{Universality classes in {B}urgers turbulence}.
\bjournal{Comm. Math. Phys.}
\bvolume{273}
\bpages{177--202}.
\bid{doi={10.1007/s00220-007-0251-1}, issn={0010-3616}, mr={2308754}}
\end{barticle}
%
\bptok{imsref}%
\endbibitem

\bibitem{MP08}
%
\begin{barticle}[mr]
\bauthor{\bsnm{Menon},~\bfnm{Govind}\binits{G.}} \AND
\bauthor{\bsnm{Pego},~\bfnm{Robert~L.}\binits{R.~L.}}
(\byear{2008}).
\btitle{The scaling attractor and ultimate dynamics for
{S}moluchowski's coagulation equations}.
\bjournal{J. Nonlinear Sci.}
\bvolume{18}
\bpages{143--190}.
\bid{doi={10.1007/s00332-007-9007-5}, issn={0938-8974}, mr={2386718}}
\end{barticle}
%
\bptok{imsref}%
\endbibitem

\bibitem{Norris}
%
\begin{barticle}[mr]
\bauthor{\bsnm{Norris},~\bfnm{James~R.}\binits{J.~R.}}
(\byear{1999}).
\btitle{Smoluchowski's coagulation equation: Uniqueness, nonuniqueness
and a hydrodynamic limit for the stochastic coalescent}.
\bjournal{Ann. Appl. Probab.}
\bvolume{9}
\bpages{78--109}.
\bid{doi={10.1214/aoap/1029962598}, issn={1050-5164}, mr={1682596}}
\end{barticle}
%
\bptok{imsref}%
\endbibitem

\bibitem{Pakes}
%
\begin{barticle}[mr]
\bauthor{\bsnm{Pakes},~\bfnm{Anthony~G.}\binits{A.~G.}}
(\byear{2010}).
\btitle{Critical {M}arkov branching process limit theorems allowing
infinite variance}.
\bjournal{Adv. in Appl. Probab.}
\bvolume{42}
\bpages{460--488}.
\bid{doi={10.1239/aap/1275055238}, issn={0001-8678}, mr={2675112}}
\end{barticle}
%
\bptok{imsref}%
\endbibitem

\bibitem{Prabhakar}
%
\begin{barticle}[mr]
\bauthor{\bsnm{Prabhakar},~\bfnm{Tilak~Raj}\binits{T.~R.}}
(\byear{1971}).
\btitle{A singular integral equation with a generalized {M}ittag--{L}effler function in the kernel}.
\bjournal{Yokohama Math. J.}
\bvolume{19}
\bpages{7--15}.
\bid{issn={0044-0523}, mr={0293349}}
\end{barticle}
%
\bptok{imsref}%
\endbibitem

\bibitem{SSV}
%
\begin{bbook}[mr]
\bauthor{\bsnm{Schilling},~\bfnm{Ren{\'e}~L.}\binits{R.~L.}},
\bauthor{\bsnm{Song},~\bfnm{Renming}\binits{R.}} \AND
\bauthor{\bsnm{Vondra{\v{c}}ek},~\bfnm{Zoran}\binits{Z.}}
(\byear{2010}).
\btitle{Bernstein Functions: Theory and Applications}.
\bseries{de Gruyter Studies in Mathematics}
\bvolume{37}.
\bpublisher{de Gruyter},
\blocation{Berlin}.
\bid{mr={2598208}}
\end{bbook}
%
\bptok{imsref}%
\endbibitem

\bibitem{Slack72}
%
\begin{barticle}[mr]
\bauthor{\bsnm{Slack},~\bfnm{R.~S.}\binits{R.~S.}}
(\byear{1972}).
\btitle{Further notes on branching processes with mean {$1$}}.
\bjournal{Z. Wahrsch. Verw. Gebiete}
\bvolume{25}
\bpages{31--38}.
\bid{mr={0331539}}
\end{barticle}
%
\bptok{imsref}%
\endbibitem

\end{thebibliography}
\end{document}